\newtheorem{theorem}{Theorem}[section]
\newtheorem{lemma}[theorem]{Lemma}
\newtheorem{definition}[theorem]{Definition}
\newtheorem{proposition}[theorem]{Proposition}
\newtheorem{corollary}[theorem]{Corollary}
\newtheorem{conjecture}[theorem]{Conjecture}
\theoremstyle{remark}
\newtheorem{remark}[theorem]{Remark}
\newcommand{\N}{\mathbb{N}}
\newcommand{\Z}{\mathbb{Z}}
\newcommand{\R}{\mathbb{R}}
\newcommand{\C}{\mathbb{C}}
\newcommand{\Cat}{\mathcal{C}}
\newcommand{\one}{\mathbf{1}}
\newcommand{\freegp}[1]{\Z({#1})}
\newcommand{\MS}[1]{\N({#1})}
\newcommand{\red}[1]{#1\operatorname{-red}}
\newcommand{\Galinv}{\varrho}
\newcommand{\Isom}{\operatorname{Isom}}
\newcommand{\Cuspdata}{\mathcal{D}_{\operatorname{cusp}}}
\newcommand{\Seg}{\mathcal{SEG}}
\newcommand{\lextend}[1]{\tensor*[^+]#1{}}
\newcommand{\lshrink}[1]{\tensor*[^-]#1{}}
\newcommand{\Zpstv}{Z-positive}
\newcommand{\data}{\mathfrak{d}}
\newcommand{\prcspdata}{p}
\newcommand{\mnml}[1]{{#1}-confined}
\newcommand{\prcspdataGr}{\bar p}
\newcommand{\RepsGL}{\mathcal{C}^{\GL}}
\newcommand{\RepsG}{\mathcal{C}^G}
\newcommand{\id}{\operatorname{Id}}
\newcommand{\Gr}{\mathcal{R}}
\newcommand{\GrGL}{\Gr^{\GL}}
\newcommand{\GrG}{\Gr^G}
\newcommand{\JacG}{J^G}
\newcommand{\JacGL}{J^{\GL}_{\max}}
\newcommand{\JacGLs}{J^{\GL}_{\operatorname{submax}}}
\newcommand{\JacGLnm}{J^{\GL}}
\newcommand{\comult}{\operatorname{m}^*}
\newcommand{\comults}{\operatorname{m}^*_{\operatorname{it}}}
\newcommand{\comod}{M^*}
\newcommand{\comodmax}{M^*_{\max}}
\newcommand{\comodrev}{M^*_{\operatorname{ex}}}
\newcommand{\comultG}{\mu^*}
\newcommand{\lnrset}{\mathfrak{S}^l}
\newcommand{\rnrset}{\mathfrak{S}^r}
\newcommand{\lft}{^{\operatorname{l}}}
\newcommand{\rgt}{^{\operatorname{r}}}
\newcommand{\JH}{\operatorname{JH}}
\newcommand{\ex}{\operatorname{ex}}
\newcommand{\pstv}[1]{#1^\uparrow}
\newcommand{\abs}[1]{\left|{#1}\right|}
\newcommand{\supp}{\operatorname{supp}}
\newcommand{\GL}{\operatorname{GL}}
\newcommand{\Irr}{\operatorname{Irr}}
\newcommand{\IrrGL}{\Irr\GL}
\newcommand{\IrrgenGL}{\Irr_{\operatorname{gen}}\GL}
\newcommand{\IrrcogenGL}{\Irr_{\operatorname{cogen}}\GL}
\newcommand{\IrrG}{\Irr G}
\newcommand{\IrrctmpG}{\Irr_{\operatorname{cotemp}}G}
\newcommand{\IrrtmpG}{\Irr_{\operatorname{temp}}G}
\newcommand{\dunion}{\sqcup}
\newcommand{\cuspred}{S}
\newcommand{\Cusp}{\operatorname{Cusp}}
\newcommand{\CuspGL}{\Cusp^{\GL}}
\newcommand{\CuspG}{\Cusp^G}
\newcommand{\m}{\mathfrak{m}}
\newcommand{\n}{\mathfrak{n}}
\newcommand{\soc}{\operatorname{soc}}
\renewcommand{\cos}{\operatorname{cos}}
\newcommand{\std}[1]{\mathfrak{z}(#1)}
\newcommand{\cstd}[1]{\mathfrak{l}(#1)}
\newcommand{\SI}{SI}
\newcommand{\CSI}{CSI}
\newcommand{\lshft}[1]{\overset{\leftarrow}{#1}}
\newcommand{\rshft}[1]{\overset{\rightarrow}{#1}}
\newcommand{\sdp}{\rtimes}
\newcommand{\expo}{\mathfrak{c}}
\newcommand{\Reps}{\mathcal{C}}
\numberwithin{equation}{section}
\begin{document}

\title{Some results on reducibility of parabolic induction for classical groups}
\author{Erez Lapid}
\address{Department of Mathematics, Weizmann Institute of Science, Rehovot 7610001, Israel}
\email{erez.m.lapid@gmail.com}
\author{Marko Tadi\'c}
\address{Department of Mathematics, University of Zagreb, Bijenicka 30, Zagreb 10000, Croatia}
\email{tadic@math.hr}
\date{\today}

\maketitle

\begin{abstract}
Given a (complex, smooth) irreducible representation $\pi$ of the general linear group over a non-archimedean local field and an irreducible supercuspidal representation
$\sigma$ of a classical group, we show that the (normalized) parabolic induction $\pi\rtimes\sigma$ is reducible if there exists $\rho$ in the
supercuspidal support of $\pi$ such that $\rho\rtimes\sigma$ is reducible. In special cases we also give irreducibility criteria for $\pi\sdp\sigma$
when the above condition is not satisfied.
\end{abstract}

\setcounter{tocdepth}{1}
\tableofcontents

\section{Introduction}
In this paper we study reducibility of parabolic induction of representations of classical groups over $p$-adic fields.
(All representations considered are complex, smooth and of finite length, hence admissible.)
Of course, the problem goes back to the early days of representation theory. It is important not only in its own right
but also for studying other classes of representations such as discrete series \cite{MR1896238} and unitary representations \cite{MR1181278}.

We fix a classical group $G$ over a non-archimedean local field $F$ of characteristic $0$ and a supercuspidal irreducible representation $\sigma$
of $G(F)$. In the unitary case let $E/F$ be the quadratic extension over which $G$ splits and let $\Galinv$ be the Galois involution.
In the other cases let $E=F$ and $\Galinv=\id$.

Denote by $\tilde{}$ the composition of $\Galinv$ with the involution $g\mapsto\tensor[^t]g{^{-1}}$ of $\GL_n(E)$.
(We continue to denote by $\tilde{}$ the induced involution on the various objects pertaining to $\GL_n(E)$.)
Denote by $\IrrGL$ the set of irreducible representations (up to equivalence) of any of the groups $\GL_n(E)$, $n\ge0$.
Let $\CuspGL\subset\IrrGL$ be the set of (not necessarily unitarizable) irreducible supercuspidal representations (up to equivalence)
of any of the groups $\GL_n(E)$, $n\ge1$. For any $\rho\in\CuspGL$ and $x\in\R$ let $\rho[x]\in\CuspGL$ be the twist of $\rho$ by the character $\abs{\det\cdot}^x$.
Let $\rho[\Z]=\{\rho[m]:m\in\Z\}$ and $\rho[\R]=\{\rho[x]:x\in\R\}$.
For any $\pi\in\IrrGL$ we denote by $\supp\pi\subset\CuspGL$ the supercuspidal support of $\pi$.

A major role is played by the set
\[
\cuspred_\sigma=\{\rho\in\CuspGL:\rho\sdp\sigma\text{ is reducible}\}.
\]
(As usual, we denote by $\sdp$ (resp., $\times$) normalized parabolic induction for classical groups (resp., the general linear group).)
By standard results, $\tilde\cuspred_\sigma=\cuspred_\sigma$ and for any $\rho\in\cuspred_\sigma$ we have
$\rho[\R]\cap\cuspred_\sigma=\{\rho,\tilde\rho\}$.
A deeper result due to M\oe glin states that in fact $\tilde\rho\in\rho[\Z]$.

Let $\pi\in\IrrGL$. The question of reducibility of $\pi\sdp\sigma$ naturally divides into two cases, according to whether or not
$\supp\pi$ intersects $\cuspred_\sigma$.
In the first case there is a clear-cut answer, which is our first main result.
\begin{theorem} \label{thm: red}
Let $\pi\in\IrrGL$ be such that $\supp\pi\cap\cuspred_\sigma\ne\emptyset$. Then $\pi\sdp\sigma$ is reducible.
\end{theorem}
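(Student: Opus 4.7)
The plan is to prove reducibility of $\pi \sdp \sigma$ via a Jacquet-module computation using Tadi\'c's combinatorial formula, exploiting the hypothesized reducibility of $\rho \sdp \sigma$ for some $\rho \in \supp \pi \cap \cuspred_\sigma$.

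First I would fix such a $\rho$, set $d = \deg \rho$, and denote by $\tau_1, \tau_2$ the two irreducible constituents of the reducible $\rho \sdp \sigma$. A useful preliminary reduction is to assume $\supp \pi \subseteq \rho[\Z]$. Using that parabolic induction respects the Bernstein decomposition, factor $\pi$ as an irreducible product $\pi_\rho \times \pi'$ with $\supp \pi_\rho \subseteq \rho[\Z]$ and $\supp \pi'$ in other cuspidal lines (iterating if additional lines intersect $\cuspred_\sigma$). Then $\pi \sdp \sigma$ has the same composition factors as $\pi' \sdp (\pi_\rho \sdp \sigma)$, and reducibility of $\pi_\rho \sdp \sigma$ propagates to $\pi \sdp \sigma$ because distinct irreducible constituents of $\pi_\rho \sdp \sigma$ remain distinguishable (being recoverable via the Jacquet module along $\pi'$) and give rise to disjoint composition series.

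Next, I would compute the Jacquet module $\comultG(\pi \sdp \sigma)$ along the parabolic with Levi $\GL_d(E) \times G_{n-d}$ by means of Tadi\'c's formula $\comultG(\pi \sdp \sigma) = \comod(\pi) \sdp \comultG(\sigma)$, which simplifies (since $\sigma$ is cuspidal) to $\comod(\pi) \sdp \sigma$. Isolating the ``$\rho$-on-the-left'' part of this Jacquet module (summands of the form $\rho \otimes \ldots$), one obtains two kinds of contributions: a direct one from summands of $\comod(\pi)$ with $\rho$ on the left, yielding $\rho \otimes (\pi_1 \sdp \sigma)$, and a ``reflected'' one from summands of the form $\pi_2 \otimes \tilde\rho$, whose $\tilde\rho$ on the right is reflected through $\sigma$ by the formula for $\comod$ to contribute $\rho \otimes (\pi_2 \sdp \sigma)$. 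Because $\rho \in \supp \pi$, this isotypic piece is nonzero; moreover, the reducibility $\rho \sdp \sigma = \tau_1 + \tau_2$ in the Grothendieck group introduces a further splitting within these contributions.

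The principal obstacle is to lift this combinatorial multiplicity in the Jacquet module to the existence of at least two genuinely distinct irreducible subquotients of $\pi \sdp \sigma$, rather than a single irreducible whose Jacquet module happens to accommodate all the contributions. The natural tactic is to identify a distinguishing invariant --- for example the socle of the Jacquet module to the appropriate smaller classical group, detected as $\tau_1$ versus $\tau_2$, or a Langlands-parameter label --- that separates composition factors of $\pi \sdp \sigma$ according to which half of $\rho \sdp \sigma$ they descend from. Making this rigorous will likely require an induction on the Langlands (or Zelevinsky) length of $\pi$ within the cuspidal line $\rho[\Z]$, combined with explicit Jacquet-module computations for essentially square-integrable representations in that line.
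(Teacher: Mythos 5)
Your plan identifies the right starting tools (Tadi\'c's formula $\comultG(\pi\sdp\sigma)=\comod(\pi)\sdp\comultG(\sigma)$, the reduction to $\supp\pi\subset\rho[\Z]$ via the Bernstein decomposition), and you correctly flag the crux: a large Jacquet module does not by itself prove reducibility, since a single irreducible can absorb many Jacquet constituents. But you leave that crux unresolved, and the partial plan you sketch to resolve it does not work as described. When you isolate the $\rho\otimes(\cdots)$ part of $\comultG(\pi\sdp\sigma)$, the $G$-side factors you obtain are of the form $\beta\sdp\sigma$ for various $\beta$ coming from $\comults(\pi)$; the reducibility $\rho\sdp\sigma=\tau_1+\tau_2$ does not enter at this level, so the ``further splitting'' you invoke is not actually visible in that piece of the Jacquet module. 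Likewise, the proposal to find a ``distinguishing invariant'' that separates constituents ``according to which of $\tau_1,\tau_2$ they descend from,'' and to run an induction on Langlands/Zelevinsky length, is left at the level of an aspiration rather than an argument.

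The paper's proof takes a genuinely different and more structured route. It sets up a partial-derivative machinery: for $A\subset\CuspGL$ there are operators $D_{A;B}$ on $\IrrGL$ and $D_A$ on $\IrrG$ such that if $\pi\sdp\sigma$ is irreducible then $D_{A;\tilde A}(\pi)\sdp D_A(\sigma)$ is also irreducible (Corollary \ref{cor: redAAvee}). Taking a minimal-degree hypothetical counterexample and applying this repeatedly forces $\pi$ to be \emph{$\cuspred_\sigma$-critical}: \mnml{$\cuspred_\sigma$} and such that for each $\rho\in\cuspred_\sigma$, either $D_{\rho;\tilde\rho}(\pi)=\pi$ or $\supp D_{\rho;\tilde\rho}(\pi)\cap\cuspred_\sigma=\emptyset$. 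Proposition \ref{prop: critclass} then classifies all such $\pi$ explicitly (products of $Z([\tilde\alpha,\alpha])$, $L([\tilde\alpha,\alpha])$, $\alpha^{\times k}\times\tilde\alpha^{\times l}$, and one exceptional family $\alpha^{\times k}\times Z([\alpha,\rshft{\alpha}]+[\lshft{\alpha},\alpha])^{\times l}$ when $\tilde\alpha=\alpha$). All but the last family are handled immediately by Theorem \ref{thm: oneseg} (reducibility of $Z(\Delta)\sdp\sigma$ for one segment), and the last family is disposed of by a short, completely explicit Jacquet-module computation of $\comultG_{\{\alpha+\lshft\alpha\};*}$. In short: the missing idea in your proposal is the reduction to a small, explicitly classifiable family of ``critical'' representations via the derivative operators; without it, the hard step you yourself single out has no clear path to completion.
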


In the case where $\supp\pi\cap\cuspred_\sigma=\emptyset$ things are more complicated and we do not have a general simple characterization
of the irreducibility of $\pi\sdp\sigma$. We need to impose a condition on either $\pi$ or $\sigma$.
Let us describe our partial results in this direction.
We first remark that it is easy to reduce the question (assuming irreducibility of parabolic induction for the general linear group is understood)
to the case where $\supp\pi\subset\rho[\Z]$ for some $\rho\in\CuspGL$ such that
$\tilde\rho\in\rho[\Z]$. Assume further that $\rho\in\cuspred_\sigma$.
(We remark that the set $\cup_{\rho\in\cuspred_\sigma}\rho[\Z]$ is independent of $\sigma$, and in fact depends only on the type of $G$.)

For any $\pi\in\IrrGL$ we construct an auxiliary representation $\pi_+\in\IrrGL$. (See Definition \ref{def: pi+}.)
Namely, if $\pi$ corresponds to a multisegment $\m$ under the Zelevinsky classification, then
$\pi_+$ corresponds to the submultisegment of $\m$ consisting of the segments with positive exponents.\footnote{By applying
the Zelevinsky--Aubert involution, we can work with the Langlands classification instead.}

We also recall the notion of ladder representations \cite{MR3163355}, which are generalization of the Speh representations.

Our second main result is the following.
\begin{theorem} \label{thm: mainintred}
Let $\pi\in\IrrGL$ be such that $\supp\pi\cap\cuspred_\sigma=\emptyset$.
Assume that
\[
\text{$\pi$ is a ladder representation}
\]
or that
\[
\supp\pi\subset\rho[\Z]\text{ where }\rho\in\cuspred_\sigma\text{ and }\rho\in\{\tilde\rho,\tilde\rho[\pm1],\tilde\rho[\pm2]\}.
\]
Then $\pi\sdp\sigma$ is irreducible if and only if $\pi_+\times\tilde\pi_+$ is irreducible (where $\tilde\pi_+=(\tilde\pi)_+$).
\end{theorem}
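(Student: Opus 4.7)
The plan is to reduce the irreducibility of $\pi\sdp\sigma$ to that of the $\GL$-representation $\pi_+\times\tilde\pi_+$ via a Jacquet module analysis. Standard disjointness arguments (the Bernstein decomposition across distinct $\rho[\Z]$-lines) allow us to assume $\supp\pi\subset\rho[\Z]$ for a single self-dual class; recall that $\tilde\rho\in\rho[\Z]$ by M\oe glin. Writing $\pi=Z(\m)$ in the Zelevinsky classification, split $\m=\m_+\sqcup\m_0\sqcup\m_-$ according to the sign of the central exponent of each segment. Then $\pi_+=Z(\m_+)$, the involution $\tilde{}$ interchanges (the images of) $\m_+$ and $\m_-$, and $\tilde\pi_+=Z(\widetilde{\m_-})$ also has all positive exponents, while $\pi_0:=Z(\m_0)$ encodes the segments straddling the fixed axis of $\tilde{}$.

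For the forward direction, assume $\pi_+\times\tilde\pi_+$ is irreducible. I would compute $\JacG(\pi\sdp\sigma)$ with respect to a maximal parabolic adapted to $\m_+\sqcup\m_-$ using the geometric lemma. The hypothesis $\supp\pi\cap\cuspred_\sigma=\emptyset$ (so $\rho,\tilde\rho\notin\supp\pi$) ensures that only one Weyl-element contribution is nontrivial: the one moving $\m_-$ across to the $\sigma$-side, where it is transformed via $\tilde{}$ into $\widetilde{\m_-}$. The resulting $\GL$-factor of the Jacquet module is precisely $\pi_+\times\tilde\pi_+$, tensored with $\pi_0\sdp\sigma$. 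The assumed irreducibility on $\GL$, combined with irreducibility of the residual $\pi_0\sdp\sigma$ (trivial when $\m_0=\emptyset$, and otherwise handled by either the ladder or the short-distance case), forces $\pi\sdp\sigma$ to be irreducible by a Frobenius-reciprocity/cosocle argument.

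For the converse, assume $\pi_+\times\tilde\pi_+$ is reducible. The same Jacquet module calculation should exhibit at least two non-isomorphic constituents of $\pi\sdp\sigma$: distinct subquotients of $\pi_+\times\tilde\pi_+$ lift to distinct constituents of $\pi\sdp\sigma$ via the Jacquet filtration. In the ladder case one invokes the explicit Jacquet module formulas of Lapid--M\'inguez \cite{MR3163355} to track these constituents precisely; here $\m_0$ is at most a single segment (by the ladder shape), so the residual $\pi_0\sdp\sigma$ is controlled. In the short-distance case the constraint $\rho\in\{\tilde\rho,\tilde\rho[\pm1],\tilde\rho[\pm2]\}$ bounds the possible central exponents of segments in $\m_0$, reducing the argument to a finite (though delicate) combinatorial check on the configuration of segments near the fixed axis.

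The main obstacle is the analysis of the ``middle'' part $\pi_0\sdp\sigma$ and of the interplay between $\m_0$ and $\m_\pm$: this is precisely why one must impose either the ladder hypothesis (under which the Jacquet module recipe becomes combinatorially explicit and $\m_0$ has at most one segment) or the short-distance hypothesis (which caps the length of central segments). Removing either hypothesis would require genuinely new input to handle arbitrarily long segments straddling the fixed axis of $\tilde{}$, where the naive translation between reducibility on $\GL$ and on $G$ breaks down.
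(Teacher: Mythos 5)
Your high-level framing --- split $\m$ by sign of exponents, relate $\pi\sdp\sigma$ to $\pi_+\times\tilde\pi_+$ via Jacquet modules, note that the two hypotheses control the ``middle'' --- matches the spirit of the paper, but the mechanism you propose for the reduction contains a genuine gap and mislocates where the real work lies.

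The claim that the hypothesis $\supp\pi\cap\cuspred_\sigma=\emptyset$ forces ``only one Weyl-element contribution'' in $\comultG(\pi\sdp\sigma)$ is false. Since $\sigma$ is supercuspidal one has $\comultG(\pi\sdp\sigma)=\comod(\pi)\sdp(\one\otimes\sigma)$, and $\comod(\pi)$ is a large sum (cf.\ \eqref{eq: comodladder}) regardless of $\cuspred_\sigma$. What is actually true, and is the engine of the paper's argument (Lemma \ref{lem: SI}), is a \emph{multiplicity-one} statement: projecting $\comultG(\pi\sdp\theta)$ to the full cuspidal data $\data_\m$ isolates the single term $\pi\otimes\theta$, which yields that $\pi\sdp\theta$ is \SI. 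The paper then applies this to both $\pi\sdp\sigma$ and $\pi^\vee\sdp\sigma^\vee$ and concludes irreducibility from $\soc(\Pi)^\vee\simeq\soc(\Pi^\vee)$ via \eqref{eq: pisimple}; your Frobenius-reciprocity/cosocle step is a vague placeholder for this.

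More importantly, your proposal treats the irreducibility of the ``residual'' $\pi_0\sdp\sigma$ as the bottleneck, but the actual bottleneck is irreducibility of $Z(\m_{\ge 0})\sdp\sigma$, i.e.\ the non-negative (not merely exponent-zero) part induced to $\sigma$, together with the \SI\ property of $\pi_+\times\tilde\pi_+$. Corollary \ref{cor: irrconseq} shows that once $Z(\m_{\ge0})\sdp\sigma$ and $Z(\m_{\le0})\sdp\sigma$ are irreducible and both $\pi_+\times\tilde\pi_+$, $\tilde\pi_+\times\pi_+$ are \SI, the irreducibility of $\pi\sdp\sigma$ is \emph{equivalent} to that of $\pi_+\times\tilde\pi_+$ (via the chain irreducible $\Leftrightarrow$ $\pi\sdp\sigma\simeq\tilde\pi\sdp\sigma$ $\Leftrightarrow$ socle condition $\Leftrightarrow$ socle condition on the $\GL$ side, using Lemma \ref{lem: MW}); this is what replaces your hand-waved ``distinct subquotients lift'' step in the converse. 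The two hypotheses then enter exactly here: for ladders, Lemma \ref{lem: nonegirr} proves irreducibility of $Z(\m_{\ge0})\sdp\sigma$ by a nontrivial induction using Corollary \ref{cor: length2} and a non-appearance estimate (Lemma \ref{lem: nonineq11}), which has nothing to do with $\m_0$ being a single segment; for the short-distance case, the paper switches to the decomposition $\pi_{>\alpha}\times\pi_{[\tilde\alpha,\alpha]}\times\pi_{<\tilde\alpha}$ (not $\pi_+,\pi_0,\pi_-$), uses Lemma \ref{lem: >alphairred}, and needs the constraint on $\rho$ only to guarantee $\pi_+=\pi_{>\alpha}$ and that $\pi_{[\tilde\alpha,\alpha]}$ degenerates to a power of a single cuspidal. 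Your diagnosis that the hypotheses ``bound central exponents of $\m_0$'' thus misidentifies their role, and as stated your plan would not close either direction.
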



As a supplement to the theorem, we remark that in the case where $\pi_1,\pi_2\in\IrrGL$ are two ladder representations,
the irreducibility of $\pi_1\times\pi_2$ can be characterized explicitly \cite[Proposition 6.20]{MR3573961}.
We also remark that by the work of Shahidi \cite{MR1070599}, the condition $\rho\in\{\tilde\rho,\tilde\rho[\pm1],\tilde\rho[\pm2]\}$ is satisfied for any
$\rho\in\cuspred_\sigma$
if $G_n$ is quasi-split and $\sigma$ admits a Whittaker model with respect to a non-degenerate character of a maximal unipotent subgroup of $G_n$.
(Without restriction on $G_n$ or $\sigma$, the condition $\rho\in\{\tilde\rho,\tilde\rho[\pm1]\}$ is satisfied for all but finitely many
$\rho\in\cuspred_\sigma$.)

On the other hand, if $\rho=\tilde\rho[m]\in\cuspred_\sigma$ with $m>2$ then it is easy to construct $\pi\in\IrrGL$ such that $\pi=\pi_+$,
$\supp\pi\subset\rho[\Z]\setminus\cuspred_\sigma$ and $\pi\sdp\sigma$ is reducible.
Thus, the additional conditions on $\pi$ or $\sigma$ in Theorem \ref{thm: mainintred} are not superfluous.
We do not know whether the irreducibility of
$\pi_+\times\tilde\pi_+$ is a necessary condition for the irreducibility of $\pi\sdp\sigma$ in general.
At any rate we expect the following to hold.

\begin{conjecture}
Assume that $\pi\in\IrrGL$ is such that $\supp\pi\cap\cuspred_\sigma=\emptyset$.
Then the irreducibility of $\pi\sdp\sigma$ depends only on $\pi$ and not on $\sigma$.
\end{conjecture}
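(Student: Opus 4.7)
The plan is to reduce the question to an intrinsic condition on $\pi$, following the philosophy of Theorem \ref{thm: mainintred} but without relying on the explicit characterization via $\pi_+\times\tilde\pi_+$, which (as the discussion above shows) cannot hold universally.

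First, I would reduce to the case where $\supp\pi$ lies on a single Galois-closed cuspidal line $\rho[\Z]$ with $\tilde\rho\in\rho[\Z]$. Using the Bernstein--Zelevinsky analysis for $\GL$ together with the separation of Bernstein blocks for classical groups, one writes $\pi=\pi_1\times\dots\times\pi_k$ with the $\supp\pi_i$ supported on distinct Galois-closed pairs of lines, so that the irreducibility of $\pi\sdp\sigma$ reduces to the irreducibility of each $\pi_i\sdp\sigma$. Moreover, if the line carrying $\supp\pi_i$ satisfies $\tilde\rho_i\notin\rho_i[\Z]$, then $\pi_i\sdp\sigma$ is automatically irreducible. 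Hence the substantive case is $\supp\pi\subset\rho[\Z]$ with $\tilde\rho\in\rho[\Z]$ and $\rho\notin\cuspred_\sigma$.

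Second, to compare two admissible choices $\sigma_1,\sigma_2$ (of classical groups of the same type) satisfying the non-intersection hypothesis, I would appeal to the theory of types for classical groups and its translation into affine Hecke algebras (Heiermann, Solleveld). The Bernstein block containing $\pi\sdp\sigma_i$ is Morita equivalent to modules over an affine Hecke algebra of type $B$ or $C$, whose parameters depend on $\sigma_i$ only via the reducibility points $\cuspred_{\sigma_i}\cap\rho[\Z]$. The non-intersection hypothesis places the parameter of $\pi$ in the open region of the Hecke algebra spectrum that avoids the reducibility walls of $\sigma_i$, and one would try to show that reducibility of the corresponding Hecke-algebra module depends only on the combinatorial position of $\pi$ relative to the line $\rho[\Z]$---data intrinsic to $\pi$ and to the type of $G$.

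The main obstacle is that the Hecke algebra parameters depend globally on $\sigma_i$ through the location of the walls, so reducibility can a priori depend on walls far from $\pi$. One strategy is a direct representation-theoretic reduction to the cases handled by Theorem \ref{thm: mainintred}, perhaps by constructing an auxiliary supercuspidal $\sigma'$ with $\cuspred_{\sigma'}$ translated into the $\{\tilde\rho,\tilde\rho[\pm1],\tilde\rho[\pm2]\}$ range and relating the irreducibility of $\pi\sdp\sigma$ to that of $\pi\sdp\sigma'$ by a Jacquet-module or transfer argument, exploiting the fact that $\bigcup_{\rho\in\cuspred_\sigma}\rho[\Z]$ depends only on the type of $G$. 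An alternative strategy is a localization argument, showing that Hecke-algebra reducibility at $\pi$'s parameter is insensitive to walls far away, e.g.\ via a Kazhdan--Lusztig-type computation. Either route will require going beyond the methods developed in the present paper.
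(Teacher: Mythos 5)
This statement is labeled a \textbf{Conjecture} in the paper, and the authors give no proof of it; they explicitly note that they ``do not have a general simple characterization of the irreducibility of $\pi\sdp\sigma$'' outside the special cases of Theorem~\ref{thm: mainintred}. So there is no proof in the paper against which to compare yours. More importantly, your writeup is not a proof either, and you say so yourself in the final sentence: ``Either route will require going beyond the methods developed in the present paper.''

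To be concrete about where the gap is: your first reduction (to $\supp\pi\subset\rho[\Z]$ with $\tilde\rho\in\rho[\Z]$) is fine and is essentially what Lemmas~\ref{lem: nonselfdual} and~\ref{lem: redtok=2} accomplish. But once you are in that situation, the body of the argument is missing. The appeal to types and affine Hecke algebras only repackages the question: the Hecke algebra attached to the block of $\pi\sdp\sigma$ has parameters determined by the reducibility points in $\rho[\Z]$, and those reducibility points (not just their union over all lines, which is indeed $\sigma$-independent, but their location on the given line) do depend on $\sigma$. You have correctly identified this as ``the main obstacle,'' but the two strategies you offer in response --- constructing an auxiliary $\sigma'$ with translated reducibility points and transferring, or a Kazhdan--Lusztig localization to show insensitivity to distant walls --- are both named without any supporting argument. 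Neither is obviously true; in fact the remark after Corollary~\ref{cor: genirr} in the paper shows that when $\alpha=\tilde\alpha[m]$ with $m>2$, the criterion $\pi_+\times\tilde\pi_+$ irreducible fails, so whatever $\sigma$-independent characterization exists must be more subtle than the one in Theorem~\ref{thm: mainintred}, and one cannot simply ``translate'' to the $m\le 2$ range without changing the answer. So the conjecture remains open, and this proposal should be read as a research program rather than a proof.
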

Granted this conjecture, an interesting natural follow-up question would be to characterize those $\pi$'s such that
$\pi\sdp\sigma$ is irreducible for any supercuspidal $\sigma$ such that $\supp\pi\cap \cuspred_\sigma=\emptyset$.
This is already non-trivial if $\pi=\pi_+$.

The structure of the paper is as follows.
In \S\ref{sec: GLn} we introduce the notation and recollect some basic results pertaining to the representation theory of the general linear group,
in the spirit of Bernstein--Zelevinsky.
In \S\ref{sec: classical} we do the same for classical groups.
Theorem \ref{thm: red} is proved in \S\ref{sec: red} by reducing it to a simple case which can be treated by hand.
In \S\ref{sec: twosegments} we treat the case of two segments. Although it is not logically necessary for the general case,
we include it in order to illustrate the idea.
Finally in \S\ref{sec: irred} we prove Theorem \ref{thm: mainintred} as well as some other basic facts about irreducibility.

Our results generalize known results such as some of the results in
\cite{MR631958, MR1134591, MR1228612, MR1346929, MR1420710, MR1665057, MR2017065, MR1658535, MR2504024}.
However, our approach is somewhat different.

A natural next step would be to extend our results beyond the case where $\sigma$ is supercuspidal.

Part of this work was done while the authors were hosted by the Mathematisches Forschungsinstitut Oberwolfach for a
``Research in Pairs'' project.
We are very grateful to the MFO for providing ideal working conditions.
Finally, it is a pleasure to thank David Goldberg, Max Gurevich, Marcela Hanzer, Chris Jantzen, Alberto M\'inguez, Colette M\oe glin and Goran Muic for useful correspondence.

\subsection{General notation}

Throughout we fix a non-archimedean local field $F$ with normalized absolute value $\abs{\cdot}$.

\emph{From section \ref{sec: classical} onward we assume that $F$ is of characteristic $0$.}
(Hopefully this assumption can be lifted.)

For any set $X$, $\freegp{X}$ (resp., $\MS{X}$) denotes the free abelian group (resp., monoid) generated by $X$.
We denote the resulting order on $\freegp{X}$ by $\le$.
We can think of an element of $\MS{X}$ as a finite multiset consisting of elements of $X$.
If $a=x_1+\dots+x_k\in\MS{X}$ with $x_1,\dots,x_k\in X$ we call the underlying set $\{x_1,\dots,x_k\}$ the support of $a$.

All abelian categories considered in this paper are essentially small, $\C$-linear and locally finite.
(See \cite[Ch.~1]{MR3242743} for basic terminology and more details.)
For any such category $\Cat$ let $\Irr\Cat$ be the set of isomorphism classes of simple objects of $\Cat$ and $\Gr(\Cat)$ the Grothendieck group of $\Cat$,
isomorphic to $\freegp{\Irr\Cat}$, and thus an ordered group.
For any object $\pi$ of $\Cat$ we denote by $\JH(\pi)$ the Jordan--H\"older sequence of $\pi$ considered as an element of $\MS{\Irr\Cat}$
and by $\ell(\pi)$ the length of $\pi$ (i.e., the number of elements of $\JH(\pi)$, counted with multiplicities).
For simplicity, we often write $\pi_1\le\pi_2$ if $\JH(\pi_1)\le\JH(\pi_2)$.
Denote by $\soc(\pi)$ (resp., $\cos(\pi)$) the socle (resp., cosocle) of an object $\pi$ of $\Cat$, i.e.,
the largest semisimple subobject (resp., quotient) of $\pi$.
We say that $\pi$ is \SI\ (socle irreducible) (resp., \CSI; cosocle irreducible) if $\soc(\pi)$ (resp., $\cos(\pi)$) is simple and occurs with multiplicity one in $\JH(\sigma)$.
Obviously,
\begin{equation} \label{eq: obvious}
\text{if $\pi$ is \SI\ then so is any non-zero subobject $\sigma$ of $\pi$, and $\soc(\sigma)=\soc(\pi)$}.
\end{equation}
Clearly, $\pi$ is simple if and only if it is \SI\ and $\soc(\pi)\simeq\cos(\pi)$.
If $\Cat$ admits a duality functor $^\vee$ (as will be the case throughout the paper) then $\cos(\pi)^\vee=\soc(\pi^\vee)$. Thus,
\begin{equation} \label{eq: pisimple}
\text{$\pi$ is simple if and only if $\pi$ or $\pi^\vee$ is \SI\ and $\soc(\pi)^\vee\simeq\soc(\pi^\vee)$}.
\end{equation}

Now let $G$ be a connected reductive group over $F$.\footnote{Later on we will also consider orthogonal groups, which are not connected}
We denote by $\Reps(G)$ the category of admissible, finitely generated representations of $G(F)$ over $\C$.
We refer to \cite{MR2567785} for standard facts about representation theory of $G(F)$, some of which we will freely use below.
We denote the contragredient duality functor by $^\vee$.
For simplicity we write $\Irr G=\Irr\Reps(G)$ and $\Gr(G)=\Gr(\Reps(G))$. 
In particular, for the trivial group, $\Irr 1$ consists of a single element which we denote by $\one$.
Note that if $G_1$ and $G_2$ are reductive groups over $F$ then $\Reps(G_1\times G_2)$ is the tensor product of $\Reps(G_1)\otimes\Reps(G_2)$
in the sense of \cite[\S5]{MR1106898}.
(This follows from [ibid., Proposition 5.3] and the fact that the tensor product of categories commutes with inductive limits [ibid., p. 143].)

Denote by $\Cusp G\subset\Irr G$ the set of irreducible supercuspidal representations of $G(F)$ (up to equivalence).
Let $\Cuspdata(G)$ be the set of cuspidal data of $G$, i.e., $G(F)$-conjugacy classes of pairs $(M,\sigma)$ consisting of
a Levi subgroup $M$ of $G$ defined over $F$ and $\sigma\in\Cusp M$.
For any $\data\in\Cuspdata(G)$ let $\Reps(G)_{\data}$ be the Serre subcategory\footnote{See \cite[Definition 4.14.1]{MR3242743}}
of $\Reps(G)$ generated by the (normalized) parabolic induction
$\operatorname{Ind}_{P(F)}^{G(F)}\sigma$ where $(M,\sigma)\in\data$ and $P$ is a parabolic subgroup of $G$ defined over $F$ with Levi subgroup $M$.
(The definition is independent of the choice of $(M,\sigma)$ and $P$. We also recall that parabolic induction commutes with $^\vee$.)
The category $\Reps(G)$ splits as
\[
\Reps(G)=\oplus_{\data\in\Cuspdata(G)}\Reps(G)_{\data}
\]
where for each $\data$, $\Irr\Reps(G)_{\data}$ is finite. Accordingly,
\begin{equation} \label{eq: Grcd}
\Gr(G)=\oplus_{\data\in\Cuspdata(G)}\Gr(G)_{\data}\ \ \text{where }\Gr(G)_{\data}=\Gr(\Reps(G)_{\data}).
\end{equation}
We denote by $\prcspdata_{\data}$ the projection $\Reps(G)\rightarrow\Reps(G)_{\data}$.
More generally, if $A\subset\Cuspdata(G)$ then we denote the projection $\Reps(G)\rightarrow\Reps(G)_A:=\oplus_{\data\in A}\Reps(G)_\data$ by $\prcspdata_A$.
We write $\prcspdataGr_A:\Gr(G)\rightarrow\Gr(G)_A=\oplus_{\data\in A}\Gr(G)_{\data}$ for the corresponding projection in the Grotehndieck group.

We denote by $^t$ the Zelevinsky--Aubert involution on $\Gr(G)$. It respects the decomposition \eqref{eq: Grcd}
and induces an involution, also denoted by $^t$ on $\Irr G$ \cite{MR1285969, MR1390967} (cf.~\cite[\S6]{MR2113853}
for the even orthogonal case).\footnote{See \cite{1701.07329} for a recent approach which highlights the functorial properties of $^t$}

\section{The general linear group} \label{sec: GLn}
In this section we recall some facts about representation theory of the general linear group.
Most of the results are standard and go back to the seminal work of Bernstein--Zelevinsky.
\subsection{Notation}
Let
\[
\RepsGL=\oplus_{n\ge0}\Reps(\GL_n),\ \ \IrrGL=\Irr\RepsGL=\dunion_{n\ge0}\Irr\GL_n,\ \ \GrGL=\Gr(\RepsGL)=\oplus_{n\ge0}\Gr(\GL_n).
\]
If $\pi\in\Reps(\GL_n)$, we write $\deg\pi=n$,
and for any $x\in\R$ we denote by $\pi[x]$ the representation obtained from $\pi$ by twisting by the character $\abs{\det}^x$.
In particular set $\rshft{\pi}=\pi[1]$ and $\lshft{\pi}=\pi[-1]$.
If $\pi\in\Irr(\GL_n)$, $n>0$ (or more generally, if $\pi$ has a central character $\omega_\pi$) let $\expo(\pi)$ be the real number such that $\pi[-\expo(\pi)]$
has a unitary central character. (That is, the character $\omega_\pi\abs{\cdot}^{-\deg\pi\cdot\expo(\pi)}$ is unitary.)
We also set $\expo(\one)=0$.
Note that $\expo(\pi^\vee)=-\expo(\pi)$ and
\[
\expo(\pi_1\times\pi_2)=(\expo(\pi_1)\deg\pi_1+\expo(\pi_2)\deg\pi_2)/(\deg\pi_1+\deg\pi_2).
\]

For any $n,m\ge0$ let
\[
\times:\Reps(\GL_n)\times\Reps(\GL_m)\rightarrow\Reps(\GL_{n+m})
\]
be the bilinear biexact bifunctor of (normalized) parabolic induction with respect to the parabolic subgroup of block upper triangular matrices.
We also denote by $\times$ the resulting bilinear biexact bifunctor
\[
\times:\RepsGL\times\RepsGL\rightarrow\RepsGL.
\]
Together with the unit element $\one$ and the isomorphism of induction by stages, this endows $\RepsGL$
with the structure of a monoidal category (and hence, a ring category over $\C$ in the sense of \cite[Definition 4.2.3]{MR3242743}).
This structure also induces a $\Z_{\ge0}$-graded ring structure on $\GrGL$. Although $\times$ is not symmetric, $\GrGL$ is a commutative ring.

Let
\begin{gather*}
\JacGL:\RepsGL\rightarrow\RepsGL\otimes\RepsGL=\oplus_{n,m}\Reps(\GL_n\times\GL_m),\\
\JacGLnm_{(n,m)}:\Reps(\GL_{n+m})\rightarrow\Reps(\GL_n)\otimes\Reps(\GL_m)=\Reps(\GL_n\times\GL_m),\ \ n,m\ge0
\end{gather*}
be the left-adjoint functors to $\times$. Thus, $\JacGL=\oplus_n(\oplus_{n_1+n_2=n}\JacGLnm_{(n_1,n_2)})$ and $\JacGLnm_{(n,m)}$ is the (normalized) Jacquet functor.
We denote by $\comult$ the resulting ring homomorphism\footnote{The fact that $\comult$ is a homomorphism follows from the geometric lemma of Bernstein--Zelevinsky.}
\[
\comult:\GrGL\rightarrow\GrGL\otimes\GrGL.
\]

We also write
\[
\JacGLs=(\JacGL\otimes\id)\circ\JacGL=(\id\otimes\JacGL)\circ\JacGL:\RepsGL\rightarrow\RepsGL\otimes\RepsGL\otimes\RepsGL
\]
and
\[
\comults=(\comult\otimes\id)\circ\comult=(\id\otimes\comult)\circ\comult:\GrGL\rightarrow\GrGL\otimes\GrGL\otimes\GrGL.
\]


Let $\CuspGL=\dunion_{n>0}\Cusp\GL_n$. (Note that we exclude $\one$ from $\CuspGL$.)
We identify $\dunion_{n\ge0}\Cuspdata(\GL_n)$ with $\MS{\CuspGL}$.
Thus, we get a decomposition
\[
\RepsGL=\oplus_{\data\in\MS{\CuspGL}}\RepsGL_{\data}
\]
and for any subset $X\subset\MS{\CuspGL}$ a projection
\[
\prcspdata_X:\RepsGL\rightarrow\RepsGL_X:=\oplus_{\data\in X}\RepsGL_{\data}.
\]
We write $\GrGL_X=\Gr(\RepsGL_X)=\oplus_{\data\in X}\Gr(\RepsGL_{\data})$ and $\prcspdataGr_X:\GrGL\rightarrow\GrGL_X$.
For any $\data\in\MS{\CuspGL}$ we have $\RepsGL_{\data}\subset\Reps(\GL_{\deg\data})$ where we extend $\deg$ to $\MS{\CuspGL}$ by linearity.
For any $\data_1,\data_2\in\MS{\CuspGL}$ we have $\RepsGL_{\data_1}\times\RepsGL_{\data_2}\subset\RepsGL_{\data_1+\data_2}$.
If $0\ne\pi\in\RepsGL_{\data}$ we write $\data_\pi=\data\in\MS{\CuspGL}$ and denote by $\supp\pi\subset\CuspGL$ the support of $\data_\pi$.

For any $X\subset\MS{\CuspGL}$ we write $\JacGLnm_{X;*}$ for the composition of $\JacGL$ with
\[
\RepsGL\otimes\RepsGL\xrightarrow{\prcspdata_X\otimes\id}\RepsGL_X\otimes\RepsGL.
\]
Analogously for $\JacGLnm_{*;X}$. We also write $\comult_{X;*}$ and $\comult_{*;X}$ for the corresponding homomorphisms
\[
\GrGL\rightarrow\GrGL_X\otimes\GrGL,\ \ \GrGL\rightarrow\GrGL\otimes\GrGL_X.
\]

Similarly, given $X,Y\subset\MS{\CuspGL}$ we write $\JacGLnm_{X;*;Y}$ for the composition
\[
\RepsGL\xrightarrow{\JacGLs}\RepsGL\otimes\RepsGL\otimes\RepsGL\xrightarrow{\prcspdata_X\otimes\id\otimes\prcspdata_Y}
\RepsGL_X\otimes\RepsGL\otimes\RepsGL_Y
\]
and
\[
\comult_{X;*;Y}:\GrGL\rightarrow\GrGL_X\otimes\GrGL\otimes\GrGL_Y
\]
for the corresponding map of Grothendieck groups.

\subsection{Derivatives}\footnote{This notion should not be confused with Zelevinsky's notion of derivative.} \label{sec: derivatives}
\begin{definition}
Let $\rho\in\CuspGL$. We say that $\pi\in\IrrGL$ is left $\rho$-reduced if the following equivalent conditions are satisfied.
\begin{enumerate}
\item There does not exist $\pi'\in\IrrGL$ such that $\pi\hookrightarrow\rho\times\pi'$.
\item $\JacGLnm_{\{\rho\};*}(\pi)=0$, i.e., there does not exist $\pi'\in\IrrGL$ such that $\rho\otimes\pi'\le\JacGL(\pi)$.
\item $\JacGLnm_{\MS{\{\rho\}};*}(\pi)=\one\otimes\pi$, i.e., there do not exist $\pi_1,\pi_2\in\IrrGL$ such that
$\pi_1\otimes\pi_2\le\JacGL(\pi)$ and $\supp\pi_1=\{\rho\}$.
\end{enumerate}
Given $A\subset\CuspGL$ we say that $\pi\in\IrrGL$ is left $A$-reduced if $\pi$ is left $\rho$-reduced for any $\rho\in\CuspGL$.
Equivalently, $\JacGLnm_{\MS{A};*}(\pi)=\one\otimes\pi$.
Similarly for right $\rho$-reduced and right $A$-reduced representations.
\end{definition}

We note that $\pi$ is left $\rho$-reduced if and only if $\pi^\vee$ is right $\rho^\vee$-reduced.

For any $\pi\in\Reps(\GL)$ (not necessarily irreducible) define
\[
\lnrset(\pi)=\{\rho\in\CuspGL:\JacGLnm_{\MS{\rho};*}(\pi)\ne\one\otimes\pi\}=
\{\rho\in\CuspGL:\JacGLnm_{\{\rho\};*}(\pi)\ne0\}.
\]
This is a finite subset of $\CuspGL$ which is nonempty if $\pi\ne0$ unless $\deg\pi=0$.
It follows from the geometric lemma that
\begin{equation} \label{eq: redprod}
\lnrset(\pi_1\times\pi_2)=\lnrset(\pi_1)\cup\lnrset(\pi_2)
\end{equation}
and by Frobenius reciprocity
\begin{equation} \label{eq: ifembedlnr}
\text{if $\pi\hookrightarrow\pi_1\times\pi_2$ with $\pi_1,\pi_2\in\IrrGL$ then }\lnrset(\pi)\supset\lnrset(\pi_1).
\end{equation}
Similarly, define
\[
\rnrset(\pi)=\{\rho\in\CuspGL:\JacGLnm_{*;\MS{\rho}}(\pi)\ne\pi\otimes\one\}=
\{\rho\in\CuspGL:\JacGLnm_{*;\{\rho\}}(\pi)\ne0\}.
\]
Note that
\[
\lnrset(\pi)^\vee=\rnrset(\pi^\vee).
\]

\begin{lemma} (\cite{MR2306606})
For any $\pi\in\IrrGL$ and $A\subset\CuspGL$ there exist $\pi_A\lft\in\IrrGL$ and $L_A(\pi)\in\IrrGL$ satisfying the following conditions:
\begin{enumerate}
\item $\pi\hookrightarrow\pi_A\lft\times L_A(\pi)$, i.e., $\JacGL(\pi)\twoheadrightarrow\pi_A\lft\otimes L_A(\pi)$.
\item $\supp\pi_A\lft\subset A$.
\item $L_A(\pi)$ is left $A$-reduced.
\end{enumerate}
Moreover, $\pi_A\lft$ and $L_A(\pi)$ are uniquely determined by $\pi$ and we have
\[
\comult_{\MS{A};*}(\pi)=\pi_A\lft\otimes L_A(\pi)+\sum_i\alpha_i\otimes\beta_i
\]
where $\deg\beta_i>\deg L_A(\pi)$ for all $i$.
\end{lemma}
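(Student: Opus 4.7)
The plan is to prove existence, the Grothendieck-group identity, and uniqueness, all by induction on $\deg\pi$. For \emph{existence}, the base case handles $\pi$ which are already left $A$-reduced (including $\pi=\one$), where I set $\pi_A\lft=\one$ and $L_A(\pi)=\pi$. Otherwise there exists $\rho\in A$ with $\JacGLnm_{\{\rho\};*}(\pi)\ne 0$; the socle of this Jacquet module contains some irreducible $\rho\otimes\pi'$, and Frobenius reciprocity yields an embedding $\pi\hookrightarrow\rho\times\pi'$. Since $\deg\pi'<\deg\pi$, the induction hypothesis gives $\pi'\hookrightarrow\pi'_A\lft\times L_A(\pi')$ with the stated properties. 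Composing, $\pi\hookrightarrow(\rho\times\pi'_A\lft)\times L_A(\pi')$, and every composition factor of $\rho\times\pi'_A\lft$ has supercuspidal support in $\{\rho\}\cup\supp\pi'_A\lft\subset A$. By exactness of $\times$, a Jordan--H\"older filtration of $\rho\times\pi'_A\lft$ induces a filtration of the full induction whose subquotients have the form $\tau\times L_A(\pi')$ with $\tau\in\IrrGL$ and $\supp\tau\subset A$; simplicity of $\pi$ then forces $\pi\hookrightarrow\tau\times L_A(\pi')$ for some such $\tau$, and I set $\pi_A\lft:=\tau$, $L_A(\pi):=L_A(\pi')$.

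For the \emph{Grothendieck-group identity}, I would compute $\comult_{\MS{A};*}(\rho\times\pi')$ using the geometric lemma, which gives $\comult(\rho)=\one\otimes\rho+\rho\otimes\one$, and the inductive formula for $\comult_{\MS{A};*}(\pi')$. The resulting expansion reads
\[
\comult_{\MS{A};*}(\rho\times\pi')=(\rho\times\pi'_A\lft)\otimes L_A(\pi')+\pi'_A\lft\otimes(\rho\times L_A(\pi'))+\text{(terms of higher right-factor degree)}.
\]
Among these, only the first term has right factor of the minimal degree $\deg L_A(\pi')$. Since $\pi\hookrightarrow\rho\times\pi'$, one has $\comult_{\MS{A};*}(\pi)\le\comult_{\MS{A};*}(\rho\times\pi')$ in the Grothendieck group, so the minimal-$\deg\beta$ part of $\comult_{\MS{A};*}(\pi)$ consists of subquotients of $(\rho\times\pi'_A\lft)\otimes L_A(\pi')$. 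Combined with $\pi_A\lft\otimes L_A(\pi)\le\comult_{\MS{A};*}(\pi)$ coming from Frobenius applied to $\pi\hookrightarrow\pi_A\lft\times L_A(\pi)$, one identifies the minimal-degree contribution as exactly $\pi_A\lft\otimes L_A(\pi)$ with multiplicity one, giving the claimed identity.

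For \emph{uniqueness}, any $(\alpha,\beta)\in\IrrGL\times\IrrGL$ satisfying the three properties gives $\alpha\otimes\beta\le\comult_{\MS{A};*}(\pi)$ by Frobenius; the Grothendieck-group identity together with the left $A$-reducedness of $\beta$ (which precludes $\deg\beta>\deg L_A(\pi)$ via the coassociativity relation between $\comult_{\MS{A};*}$ and itself) forces $\deg\beta=\deg L_A(\pi)$ and hence $(\alpha,\beta)\simeq(\pi_A\lft,L_A(\pi))$. The \emph{main obstacle} is pinning down the multiplicity statement at the minimal-degree level, i.e., ruling out the possibility that multiple composition factors of $\rho\times\pi'_A\lft$ contribute to the minimal stratum of $\comult_{\MS{A};*}(\pi)$; one clean way to handle this is to strengthen the inductive hypothesis to include the multiplicity-one claim and trace how a second composition factor would force, via the coassociativity $(\comult_{\MS{A};\MS{A}}\otimes\id)\circ\comult_{\MS{A};*}=(\id\otimes\comult_{\MS{A};*})\circ\comult_{\MS{A};*}$, an extra term in $\comult_{\MS{A};*}(L_A(\pi))$ incompatible with its left $A$-reducedness.
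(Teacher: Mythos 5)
Your existence argument by induction on $\deg\pi$ is valid (modulo the minor slip of writing ``socle'' where you need a quotient for Frobenius reciprocity), though it is more circuitous than the paper's: the paper simply takes $\pi_A\lft$ of maximal degree among representations supported in $A$ admitting an embedding $\pi\hookrightarrow\pi_A\lft\times\pi'$, and maximality forces the complementary factor to be left $A$-reduced.

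The real issue is the multiplicity-one claim in the Grothendieck-group identity, which you yourself flag as ``the main obstacle.'' The gap is genuine, and the coassociativity sketch you offer to close it is too vague to count as a proof: $L_A(\pi)$ being left $A$-reduced already means $\comult_{\MS{A};*}(L_A(\pi))=\one\otimes L_A(\pi)$, so it is unclear what ``extra term'' a second composition factor would have to produce there. The source of the difficulty is your choice to compute $\comult_{\MS{A};*}(\rho\times\pi')$, whose minimal-degree stratum $(\rho\times\pi'_A\lft)\otimes L_A(\pi')$ may well be reducible in its left factor, so comparing $\comult_{\MS{A};*}(\pi)$ against it does not isolate a single irreducible term. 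The paper's route sidesteps this: once you have $\pi\hookrightarrow\pi_A\lft\times L_A(\pi)$, compute $\comult_{\MS{A};*}$ of the \emph{right-hand side}. Since $\supp\pi_A\lft\subset A$, every first tensor factor from $\comult(\pi_A\lft)$ automatically lies in $\MS{A}$; and since $L_A(\pi)$ is left $A$-reduced, the only contribution from $\comult(L_A(\pi))$ with first factor in $\MS{A}$ is $\one\otimes L_A(\pi)$. Hence by the geometric lemma
\[
\comult_{\MS{A};*}(\pi_A\lft\times L_A(\pi))=\sum_i\alpha_i\otimes(\beta_i\times L_A(\pi)),
\]
where $\sum_i\alpha_i\otimes\beta_i=\comult(\pi_A\lft)$, and the unique term with second factor of minimal degree $\deg L_A(\pi)$ is $\pi_A\lft\otimes L_A(\pi)$ (coming from $\pi_A\lft\otimes\one\le\comult(\pi_A\lft)$, which has multiplicity one). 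Combined with $\comult_{\MS{A};*}(\pi)\le\comult_{\MS{A};*}(\pi_A\lft\times L_A(\pi))$ and $\pi_A\lft\otimes L_A(\pi)\le\comult_{\MS{A};*}(\pi)$ from Frobenius, this yields the stated identity with multiplicity one in a single stroke, and uniqueness follows by applying the same identity to any competing pair $(\alpha,\beta)$ and comparing minimal second-factor degrees. I recommend replacing your computation of $\comult_{\MS{A};*}(\rho\times\pi')$ with this computation of $\comult_{\MS{A};*}(\pi_A\lft\times L_A(\pi))$.
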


\begin{proof}
We recall the simple argument since we will use it repeatedly.
For the existence part, we take $\pi_A\lft$ supported in $A$ of maximal degree with respect to the property that
$\pi\hookrightarrow\pi_A\lft\times\pi'$ for some $\pi'$.
The last part (and the uniqueness) follow from the fact that $\JacGL(\pi)\le\JacGL(\pi_A\lft\times L_A(\pi))$ and that by the geometric lemma we have
\[
\comult(\pi_A\lft\times L_A(\pi))=\pi_A\lft\otimes L_A(\pi)+\sum_i\alpha_i\otimes\beta_i
\]
where $\deg\beta_i>\deg L_A(\pi)$ for all $i$ such that $\supp\alpha_i\subset A$.
\end{proof}

We call $L_A(\pi)$ the \emph{left partial derivative} of $\pi$ with respect to $A$. Analogously, there exist unique $\pi_A\rgt,R_A(\pi)\in\IrrGL$ such that
$\pi\hookrightarrow R_A(\pi)\times\pi_A\rgt$, $\supp\pi_A\rgt\subset A$ and $R_A(\pi)$ is right $A$-reduced.
We have
\[
\comult_{*;\MS{A}}(\pi)=R_A(\pi)\otimes\pi_A\rgt+\sum_i\alpha_i\otimes\beta_i,\ \ \alpha_i,\beta_i\in\IrrGL
\]
where $\deg\alpha_i>\deg R_A(\pi)$ for all $i$.

\begin{lemma} \label{lem: ABder}
For any $\pi\in\IrrGL$ and $A,B\subset\CuspGL$ there exist $\pi_1,\pi_2,\pi_3\in\IrrGL$ such that
\begin{enumerate}
\item $\pi\hookrightarrow\pi_1\times\pi_2\times\pi_3$, i.e., $\JacGLs(\pi)\twoheadrightarrow\pi_1\otimes\pi_2\otimes\pi_3$.
\item $\supp \pi_1\subset A$; $\supp \pi_3\subset B$.
\item $\pi_2$ is left $A$-reduced and right $B$-reduced.
\end{enumerate}
Moreover, $\pi_2$ is uniquely determined and is characterized by the following conditions:
\begin{enumerate}
\renewcommand{\theenumi}{\Alph{enumi}}
\item There exist $\alpha,\gamma\in\IrrGL$ such that $\alpha\otimes\pi_2\otimes\gamma\le\comult_{\MS{A};*;\MS{B}}(\pi)$.
\item If $\alpha\otimes\beta\otimes\gamma\le\comult_{\MS{A};*;\MS{B}}(\pi)$ with $\alpha,\beta,\gamma\in\IrrGL$
then either $\beta=\pi_2$ or $\deg\beta>\deg\pi_2$.
\end{enumerate}
\end{lemma}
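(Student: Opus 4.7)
The plan is to obtain the triple by iterating the previous lemma -- once for the left derivative with respect to $A$ and once for the right derivative with respect to $B$ -- and then to verify the characterization via the coassociativity of $\comult$.

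\emph{Construction.} First I apply the previous lemma to produce $\pi_A\lft, L_A(\pi)\in\IrrGL$ with $\pi\hookrightarrow\pi_A\lft\times L_A(\pi)$, $\supp\pi_A\lft\subset A$, and $L_A(\pi)$ left $A$-reduced. Then I apply its right-sided analogue to $L_A(\pi)$ with respect to $B$, obtaining $R_B(L_A(\pi))$ and $(L_A(\pi))_B\rgt$, and set $\pi_1:=\pi_A\lft$, $\pi_2:=R_B(L_A(\pi))$, $\pi_3:=(L_A(\pi))_B\rgt$. Composing the two embeddings yields (1); condition (2) is immediate; $\pi_2$ is right $B$-reduced by construction, and left $A$-reduced because $L_A(\pi)\hookrightarrow\pi_2\times\pi_3$ combined with \eqref{eq: ifembedlnr} gives $\lnrset(\pi_2)\subset\lnrset(L_A(\pi))$, which is disjoint from $A$.

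\emph{Conditions (A) and (B).} For (A), iterated Frobenius applied to (1) together with the support conditions gives $\pi_1\otimes\pi_2\otimes\pi_3\le\comult_{\MS{A};*;\MS{B}}(\pi)$. For (B), I expand via the coassociativity $\comult_{\MS{A};*;\MS{B}}=(\id\otimes\comult_{*;\MS{B}})\circ\comult_{\MS{A};*}$ using the ``moreover'' clause of the previous lemma:
\[
\comult_{\MS{A};*}(\pi)=\pi_A\lft\otimes L_A(\pi)+\sum_i\alpha_i\otimes\beta_i,\qquad\deg\beta_i>\deg L_A(\pi).
\]
Applying $\id\otimes\comult_{*;\MS{B}}$ and then invoking the right analogue of the same lemma to $L_A(\pi)$ and to each $\beta_i$: the principal term $\pi_A\lft\otimes L_A(\pi)$ produces $\pi_A\lft\otimes\pi_2\otimes\pi_3$ plus terms whose middle factor has degree strictly larger than $\deg\pi_2$, while the non-principal summands $\alpha_i\otimes\beta_i$ produce middle factors of degree at least $\deg R_B(\beta_i)=\deg\beta_i-\deg(\beta_i)_B\rgt$.

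\emph{Main obstacle.} The hard part is to show that $\deg R_B(\beta_i)>\deg\pi_2$ in the non-principal case, since this does not follow directly from $\deg\beta_i>\deg L_A(\pi)$ alone: one needs some control on $\deg(\beta_i)_B\rgt$. I would handle it by running in parallel the symmetric decomposition $\comult_{\MS{A};*;\MS{B}}=(\comult_{\MS{A};*}\otimes\id)\circ\comult_{*;\MS{B}}$, which produces the dual bound $\deg\beta\geq\deg L_A(\eta)$ for any $\eta$ with $\eta\otimes\gamma\le\comult_{*;\MS{B}}(\pi)$; combining the two sets of inequalities rules out every middle-slot entry of degree $\le\deg\pi_2$ other than $\pi_2$ itself. (An equivalent and perhaps cleaner route is to prove directly the commutation $R_B\circ L_A=L_A\circ R_B$ on irreducibles, which then automatically reconciles the two expansions.) Once (A) and (B) are in hand, uniqueness of $\pi_2$ is automatic: any triple satisfying (1)--(3) satisfies (A) by Frobenius and satisfies (B) by the same expansion argument, and (A) together with (B) pins down the unique middle-slot entry of minimal degree.
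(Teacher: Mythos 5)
Your construction of $\pi_1,\pi_2,\pi_3$, the verification of (1)--(3), and the derivation of (A) via Frobenius reciprocity all match the paper and are fine. The gap is exactly where you flag it, in establishing (B), and neither of your proposed workarounds closes it. Running the two coassociativity decompositions in parallel only gives, for a fixed term $\alpha\otimes\beta\otimes\gamma\le\comult_{\MS{A};*;\MS{B}}(\pi)$, the bounds $\deg\beta\ge\deg R_B(\delta)$ and $\deg\beta\ge\deg L_A(\eta)$ for the particular $\delta,\eta$ through which that term arises; nothing forces the minimum of $\deg R_B(\delta)$ over all $\delta$ occurring in $\comult_{\MS{A};*}(\pi)$ to be attained at $\delta=L_A(\pi)$, and the two families of inequalities do not visibly reconcile. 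The alternative of first proving $R_B\circ L_A=L_A\circ R_B$ is circular: that identity is \eqref{eq: DAB}, which the paper records as a \emph{consequence} of this very lemma.

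The paper avoids the issue by not expanding $\comult_{\MS{A};*;\MS{B}}(\pi)$ through the ``moreover'' clause at all. Instead it uses $\JacGLs(\pi)\le\JacGLs(\pi_1\times\pi_2\times\pi_3)$ together with the multiplicativity of $\comults$ (the geometric lemma): any irreducible term $\alpha\otimes\beta\otimes\gamma$ of $\comult_{\MS{A};*;\MS{B}}(\pi_1\times\pi_2\times\pi_3)$ is a subquotient of a componentwise product of terms $\alpha_j\otimes\beta_j\otimes\gamma_j\le\comults(\pi_j)$, $j=1,2,3$, and the support constraints force $\supp\alpha_j\subset A$, $\supp\gamma_j\subset B$ for every $j$. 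For $j=2$, since $\pi_2$ is left $A$-reduced and right $B$-reduced, the only admissible term in $\comults(\pi_2)$ is $\one\otimes\pi_2\otimes\one$. Hence $\beta\le\beta_1\times\pi_2\times\beta_3$, so $\deg\beta\ge\deg\pi_2$, with equality iff $\beta_1=\beta_3=\one$, i.e.\ $\beta=\pi_2$. This supplies precisely the degree control you were missing, and it comes for free once you compare against the product $\pi_1\times\pi_2\times\pi_3$ (or even against $\pi_A\lft\times L_A(\pi)$) rather than expanding $\comult_{\MS{A};*}(\pi)$ directly.
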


\begin{proof}
For the existence we can take $\pi_1=\pi_A\lft$, $\pi_2=R_B(L_A(\pi))$, $\pi_3=(L_A(\pi))_B\rgt$.
The last statement follows once again from the geometric lemma and the fact that $\JacGLs(\pi)\le\JacGLs(\pi_1\times\pi_2\times\pi_3)$.
\end{proof}

We will write $D_{A;B}(\pi)=\pi_2$. It is clear that
\begin{equation} \label{eq: DAB}
D_{A;B}(\pi)=R_B(L_A(\pi))=L_A(R_B(\pi))
\end{equation}
and
\begin{equation} \label{eq: suppDAB}
\supp L_A(\pi)\supset\supp\pi\setminus A, \ \supp R_B(\pi)\supset\supp\pi\setminus B,\ \supp D_{A;B}(\pi)\supset\supp\pi\setminus (A\cup B).
\end{equation}
We also remark that
\[
D_{A;B}(\pi)^\vee=D_{B^\vee;A^\vee}(\pi^\vee).
\]
Note that $\pi_1$ and $\pi_3$ (or even their degrees) are not necessarily uniquely determined by $\pi$ unless $A$ and $B$ are disjoint in which
case $\pi_1=\pi_A\lft$ and $\pi_3=\pi_B\rgt$.

\begin{remark} \label{rem: tensorprodGL}
Let $A$ be a subset of $\CuspGL$.
Denote by $\RepsGL_{\red{A}}$ the Serre ring subcategory of $\RepsGL$ consisting of left $A$-reduced representations.
Let $(\IrrGL)_{\red{A}}=\Irr\RepsGL_{\red{A}}\subset\IrrGL$ and $\GrGL_{\red{A}}=\Gr(\RepsGL_{\red{A}})\subset\GrGL$.
Assume that $\rshft{A}=A$. Then there is no difference between left $A$-reduced and right $A$-reduced.
Denote the complement of $A$ by $A^c$. The map $\times$ defines a bijection
\[
(\IrrGL)_{\red{A}}\times (\IrrGL)_{\red{A^c}}\rightarrow\IrrGL
\]
which gives rise to an isomorphism of rings
\[
\GrGL_{\red{A}}\otimes\GrGL_{\red{A^c}}\simeq\GrGL.
\]
In fact $\times$ induces an equivalence of ring categories
\[
\RepsGL_{\red{A}}\otimes\RepsGL_{\red{A^c}}\simeq\RepsGL.
\]
\end{remark}

\subsection{Zelevinsky classification} (\cite{MR584084})
A \emph{segment} is a non-empty finite subset $\Delta$ of $\CuspGL$ of the form $\Delta=\{\rho_1,\dots,\rho_k\}$ where $\rho_{i+1}=\rshft{\rho}_i$, $i=1,\dots,k-1$.
Denote the set of segments by $\Seg$. Given $\Delta=\{\rho_1,\dots,\rho_k\}\in\Seg$ as before,
the representations $\rho_1\times\dots\times\rho_k$ and $\rho_k\times\dots\times\rho_1$ are \SI. We write
\begin{gather*}
b(\Delta)=\rho_1, e(\Delta)=\rho_k,\\
Z(\Delta)=\soc(\rho_1\times\dots\times\rho_k),\ \ L(\Delta)=\soc(\rho_k\times\dots\times\rho_1),\\
\deg\Delta=\deg Z(\Delta)=\deg L(\Delta)=\deg\rho_1+\dots+\deg\rho_k=k\cdot\deg\rho_1,\\
\data_\Delta=\data_{Z(\Delta)}=\data_{L(\Delta)}=\rho_1+\dots+\rho_k\in\MS{\CuspGL},\\
\expo(\Delta)=\expo(Z(\Delta))=\expo(L(\Delta))=(\expo(\rho_1)+\dots+\expo(\rho_k))/k=(\expo(\rho_1)+\expo(\rho_k))/2,\\
\lextend\Delta=\{\rho_0,\dots,\rho_k\}\in\Seg, \Delta^+=\{\rho_1,\dots,\rho_{k+1}\}\in\Seg, \text{ where }\rho_0=\lshft{\rho_1}, \rho_{k+1}=\rshft{\rho_k},\\
\lshrink\Delta=\{\rho_2,\dots,\rho_k\}\in\Seg\cup\{\emptyset\}, \Delta^-=\{\rho_1,\dots,\rho_{k-1}\}\in\Seg\cup\{\emptyset\},\\
\lshft{\Delta}=\{\lshft{\rho}_1,\dots,\lshft{\rho}_k\}\in\Seg,
\rshft{\Delta}=\{\rshft{\rho}_1,\dots,\rshft{\rho}_k\}\in\Seg, \Delta^\vee=\{\rho_k^\vee,\dots,\rho_1^\vee\}\in\Seg,\\
\text{so that $Z(\Delta^\vee)=Z(\Delta)^\vee$ and similarly for $\rshft{}$ and $\lshft{}$}.
\end{gather*}
For compatibility we also write $Z(\emptyset)=L(\emptyset)=\one$.
If $\rho\in\CuspGL$ and $n\ge-1$ we write
\[
[\rho,\rho[n]]=\{\rho,\rho[1],\dots,\rho[n]\}\in\Seg\cup\{\emptyset\}.
\]

We have
\begin{equation} \label{eq: Jacseg}
\JacGL(Z(\Delta))=\oplus_{\rho\in\lextend\Delta}Z([b(\Delta),\rho])\otimes Z([\rshft{\rho},e(\Delta)]).
\end{equation}

If $\Delta_1, \Delta_2\in\Seg$ we write $\Delta_1\prec\Delta_2$ if $b(\Delta_1)\notin\Delta_2$, $b(\lshft{\Delta}_2)\in\Delta_1$ and
$e(\Delta_2)\notin\Delta_1$. In this case $\soc(Z(\Delta_1)\times Z(\Delta_2))=Z(\Delta_1')\times Z(\Delta'_2)$ where
$\Delta_1'=\Delta_1\cup\Delta_2$, $\Delta_2'=\Delta_1\cap\Delta_2$ (the latter is possibly empty). Note that
\begin{equation} \label{eq: expoge}
\expo(\Delta_1')>\expo(\Delta_1)\text{ and if $\Delta'_2\ne\emptyset$ then }\expo(\Delta_2')>\expo(\Delta_1).
\end{equation}
If either $\Delta_1\prec\Delta_2$ or $\Delta_2\prec\Delta_1$ (we cannot have both) then we say that $\Delta_1$ and $\Delta_2$ are linked.
The representation $Z(\Delta_1)\times Z(\Delta_2)$ is reducible if and only if $\Delta_1$ and $\Delta_2$ are linked,
in which case it is of length two.

A \emph{multisegment} is an element $\m$ of $\MS{\Seg}$. Thus, $\m=\Delta_1+\dots+\Delta_k$ for some $\Delta_1,\dots,\Delta_k\in\Seg$.
We write
\begin{gather*}
\deg\m=\deg\Delta_1+\dots+\deg\Delta_k,\ \supp\m=\Delta_1\cup\dots\cup\Delta_k\subset\CuspGL,\\
\m^\vee=\Delta_1^\vee+\dots+\Delta_k^\vee\in\MS{\Seg},\ \data_\m=\data_{\Delta_1}+\dots+\data_{\Delta_k}\in\MS{\CuspGL}.
\end{gather*}
We may enumerate the $\Delta_i$'s so that $\Delta_i\not\prec\Delta_j$ whenever $i<j$.
In this case, the representation $\std{\m}=Z(\Delta_1)\times\dots\times Z(\Delta_k)$ (resp., $\cstd{\m}=L(\Delta_1)\times\dots\times L(\Delta_k)$)
is \SI\ (resp., \CSI) and depends only on $\m$. The maps
\[
\m\mapsto Z(\m):=\soc(\std{\m}),\ \ \m\mapsto L(\m):=\cos(\cstd{\m})
\]
are bijections between $\MS{\Seg}$ and $\Irr$. For any $\m\in\MS{\Seg}$ we have
\begin{gather*}
\deg Z(\m)=\deg L(\m)=\deg\m,\ \data_{Z(\m)}=\data_{L(\m)}=\data_\m,\\\supp Z(\m)=\supp L(\m)=\supp\m,\ Z(\m)^\vee=Z(\m^\vee), L(\m)^\vee=L(\m^\vee),
L(\m)=Z(\m)^t.
\end{gather*}
Moreover, if $\m_1,\m_2\in\MS{\Seg}$ then $Z(\m_1+\m_2)$ occurs with multiplicity one in $\JH(Z(\m_1)\times Z(\m_2))$; similarly for $L(\m_1+\m_2)$.
We have

For simplicity, if $\m$ is a multisegment we write $\lnrset(\m)=\lnrset(Z(\m))$ and $\rnrset(\m)=\rnrset(Z(\m))$.
We have the following combinatorial description of $\lnrset(\m)$ and $\rnrset(\m)$.
\begin{proposition} (\cite[Theorem 5.11]{MR3573961} which is based on \cite{MR2306606} and \cite{MR2527415}) \label{prop: combprop}
Let $\m=\Delta_1+\dots+\Delta_k$ be a multisegment. For any $\rho\in\CuspGL$ let
\[
X_\rho=\{i:b(\Delta_i)=\rho\}\text{ and }Y_\rho=\{i:e(\Delta_i)=\rho\}.
\]
Then $\rho\notin\lnrset(\m)$ if and only if there exists an injective map $f:X_\rho\rightarrow X_{\rshft\rho}$ such that
$\Delta_i\prec\Delta_{f(i)}$ for all $i\in X_\rho$. Moreover, there exists a subset $A\subset X_\rho$ such that
\begin{equation} \label{eq: Lrho}
L_\rho(Z(\m))=Z(\m+\sum_{i\in A}(\lshrink\Delta_i-\Delta_i)).
\end{equation}
Similarly, $\rho\notin\rnrset(\m)$ if and only if there exists an injective map $f:Y_\rho\rightarrow Y_{\lshft\rho}$ such that
$\Delta_{f(i)}\prec\Delta_i$ for all $i\in Y_\rho$; there exists a subset $A\subset Y_\rho$ such that
\begin{equation} \label{eq: Rrho}
R_\rho(Z(\m))=Z(\m+\sum_{i\in A}(\Delta_i^--\Delta_i)).
\end{equation}
\end{proposition}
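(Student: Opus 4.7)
The plan is to prove the assertion about $\lnrset(\m)$ and the formula \eqref{eq: Lrho}; the statements for $\rnrset(\m)$ and \eqref{eq: Rrho} will then follow by applying the contragredient duality, using $\lnrset(\pi)^\vee=\rnrset(\pi^\vee)$ and the fact that $^\vee$ reverses the $\prec$-relation and swaps $b$ with $e$. By Remark \ref{rem: tensorprodGL} applied to $A=\rho[\Z]$, we may further assume $\supp\m\subset\rho[\Z]$, so that every segment in $\m$ lives in one cuspidal line.

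My first step is the ``if'' direction: assuming the existence of an injection $f:X_\rho\to X_{\rshft\rho}$ with $\Delta_i\prec\Delta_{f(i)}$, I show $\rho\notin\lnrset(\m)$. The key tool is the linkage identity
\[
Z(\Delta)\times Z(\Delta')=Z(\Delta+\Delta')+Z((\Delta\cup\Delta')+(\Delta\cap\Delta'))\quad\text{in }\GrGL
\]
whenever $\Delta\prec\Delta'$. Replace each paired factor $Z(\Delta_i)\times Z(\Delta_{f(i)})$ in $\std\m$ by its right-hand side. After this rearrangement, using that $b(\Delta_{f(i)})=\rshft\rho$ and that $\Delta_i\cup\Delta_{f(i)}$ is paired with the strictly shorter $\Delta_i\cap\Delta_{f(i)}$ whose left endpoint is $\rshft\rho$, a direct application of \eqref{eq: Jacseg} and the geometric lemma shows that $\JacGLnm_{\{\rho\};*}$ annihilates every term appearing on the right, whence $\JacGLnm_{\{\rho\};*}(Z(\m))\le\JacGLnm_{\{\rho\};*}(\std\m)=0$.

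Second, I handle the ``only if'' direction together with \eqref{eq: Lrho}. Suppose no such injection $f$ exists. By Hall's marriage theorem applied to the bipartite graph on $X_\rho\sqcup X_{\rshft\rho}$ with edges $\{(i,j):\Delta_i\prec\Delta_j\}$, there is a maximal nonempty $A\subset X_\rho$ violating Hall's condition. I claim that the term
\[
\rho^{\otimes|A|}\otimes Z\bigl(\m+\textstyle\sum_{i\in A}(\lshrink{\Delta}_i-\Delta_i)\bigr)
\]
appears in $\comult_{\MS{\{\rho\}};*}(Z(\m))$. This is established by embedding $Z(\m)\hookrightarrow\std\m$, expanding $\comult(\std\m)$ segment by segment via \eqref{eq: Jacseg} to enumerate candidate contributions on the left-hand factor supported in $\MS{\{\rho\}}$, and then identifying the term of maximal right-hand degree that survives as a constituent of $\comult(Z(\m))$. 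The combinatorial failure of Hall's condition is precisely what guarantees that this term cannot be cancelled by the other Jordan--H\"older constituents of $\std\m/Z(\m)$; this uses the multiplicity-one statement in the Zelevinsky classification and the observation that the segments obtained by shrinking $\{\Delta_i\}_{i\in A}$ still form a non-$\prec$-decreasing ordering. Invoking Lemma \ref{lem: ABder} (with $A=\{\rho\}$, $B=\emptyset$) and the maximality of $|A|$ then identifies $L_\rho(Z(\m))$ with the displayed representation, giving \eqref{eq: Lrho}.

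The main obstacle is the final identification step: many contributions to $\comult_{\MS{\{\rho\}};*}(\std\m)$ have the form $\rho^{\otimes k}\otimes Z(\m')$ for various shrinkings $\m'$, and one must argue that precisely the one indexed by the Hall-obstructing subset $A$ (and no strictly larger one) survives inside $\comult_{\MS{\{\rho\}};*}(Z(\m))$. This requires a careful comparison of socles between $\std\m$ and the related standard modules obtained after performing the shrinkings, and it is here that the combinatorial input from \cite{MR2306606} and \cite{MR2527415} enters most essentially.
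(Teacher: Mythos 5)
The paper does not prove this proposition; it cites it from \cite[Theorem 5.11]{MR3573961} (which itself relies on \cite{MR2306606} and \cite{MR2527415}). So any proof you supply is necessarily a different route from ``the paper's own proof.'' Your overall strategy — reduce to a single cuspidal line, use the linkage identity for the ``if'' direction, and Hall's theorem for the ``only if'' direction — is plausible in outline, but the central claim in your ``if'' direction is false as written. After replacing $Z(\Delta_i)\times Z(\Delta_{f(i)})$ by $Z(\Delta_i+\Delta_{f(i)})+Z(\Delta_i\cup\Delta_{f(i)})\times Z(\Delta_i\cap\Delta_{f(i)})$, the second summand contains the factor $Z(\Delta_i\cup\Delta_{f(i)})$ whose segment still begins at $\rho$; hence $\JacGLnm_{\{\rho\};*}$ does \emph{not} annihilate that summand, and in particular $\JacGLnm_{\{\rho\};*}(\std{\m})\ne0$ in general (contrary to the final line of your argument). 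The point is a \emph{cancellation} in the Grothendieck group, not term-by-term vanishing: a direct computation using \eqref{eq: Jacseg} and the geometric lemma shows that $\comult_{\{\rho\};*}\bigl(Z(\Delta_i)\times Z(\Delta_{f(i)})\bigr)$ and $\comult_{\{\rho\};*}\bigl(Z(\Delta_i\cup\Delta_{f(i)})\times Z(\Delta_i\cap\Delta_{f(i)})\bigr)$ are both equal to $\rho\otimes\bigl(Z(\Delta_i\cap\Delta_{f(i)})\times Z(\Delta_{f(i)})\bigr)$ in $\GrGL\otimes\GrGL$, whence $\comult_{\{\rho\};*}\bigl(Z(\Delta_i+\Delta_{f(i)})\bigr)=0$.

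A repaired version of your ``if'' argument could run as follows: first establish, by the cancellation above, that each $Z(\Delta_i+\Delta_{f(i)})$ is left $\rho$-reduced; then observe that by the multiplicity-one statement in the Zelevinsky classification, $Z(\m)$ occurs (with multiplicity one) in the Jordan--H\"older sequence of the single term $\prod_{i\in X_\rho}Z(\Delta_i+\Delta_{f(i)})\times\prod_{j\text{ unpaired}}Z(\Delta_j)$, all of whose factors are left $\rho$-reduced; and finally conclude that $Z(\m)$ is left $\rho$-reduced since the property passes to subquotients. For the ``only if'' direction and the formula \eqref{eq: Lrho} you only describe a plan; as you acknowledge, the identification of the surviving term $\rho^{\otimes|A|}\otimes Z(\m+\sum_{i\in A}(\lshrink{\Delta}_i-\Delta_i))$ inside $\comult_{\MS{\{\rho\}};*}(Z(\m))$, and in particular the choice of the correct $A$ from a Hall-obstructing family and the verification that the resulting multisegment is indeed left $\rho$-reduced, are the crux and are not carried out. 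So at present the proposal is not a proof.
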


The following consequence will be useful.
\begin{corollary} \label{cor: rhoinsuppD}
Suppose that $\Delta\le\m$ and $\rho\in\lshrink\Delta$ (i.e., $\rho\in\Delta$ but $\rho\ne b(\Delta)$).
Then $\rho\in\supp L_\rho(Z(\m))$.
Hence $\rho\in\supp D_{\rho;\rho'}(Z(\m))$ if $\rho'\ne e(\Delta)$ or $\rho'\ne\rho$.
\end{corollary}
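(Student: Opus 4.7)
The plan is to invoke Proposition \ref{prop: combprop} twice: first to control $L_\rho(Z(\m))$, and then, via the identity $D_{\rho;\rho'}(Z(\m))=R_{\rho'}(L_\rho(Z(\m)))$ from \eqref{eq: DAB}, to control $D_{\rho;\rho'}(Z(\m))$. Throughout, write $\m=\Delta_1+\dots+\Delta_k$ and let $i_0$ be such that $\Delta_{i_0}=\Delta$.

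For the first assertion, I would apply \eqref{eq: Lrho}: there exists $A\subset X_\rho=\{i:b(\Delta_i)=\rho\}$ with
\[
L_\rho(Z(\m))=Z(\m'),\qquad \m'=\m+\sum_{i\in A}(\lshrink\Delta_i-\Delta_i).
\]
Since by hypothesis $\rho\ne b(\Delta)$, the index $i_0$ does not belong to $X_\rho$, hence not to $A$. Thus $\Delta=\Delta_{i_0}$ survives unchanged as a segment of $\m'$, and $\rho\in\Delta\subset\supp\m'=\supp L_\rho(Z(\m))$.

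For the second assertion, applying \eqref{eq: Rrho} to $Z(\m')$ gives $R_{\rho'}(Z(\m'))=Z(\m'')$, where $\m''$ is obtained from $\m'$ by replacing a subset of its segments with endpoint $\rho'$ by their right-shortenings. I split into the two cases that the disjunctive hypothesis allows. If $\rho'\ne e(\Delta)$, then $\Delta$ (which lies in $\m'$ by the previous step) has endpoint different from $\rho'$, hence is untouched in the passage from $\m'$ to $\m''$, so $\rho\in\Delta\subset\supp\m''$. If instead $\rho'=e(\Delta)$ but $\rho'\ne\rho$, then $\Delta$ may be replaced in $\m''$ by $\Delta^-$; however, $\rho\ne b(\Delta)$ and $\rho\ne e(\Delta)=\rho'$, so $\rho$ is an \emph{interior} cuspidal of $\Delta$ and therefore still lies in $\Delta^-$. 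In both cases $\rho\in\supp D_{\rho;\rho'}(Z(\m))$.

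The only configuration not covered is $\rho=\rho'=e(\Delta)$, which is precisely the unique case excluded by the condition "$\rho'\ne e(\Delta)$ or $\rho'\ne\rho$". I do not expect any genuine obstacle: the argument is bookkeeping on top of Proposition \ref{prop: combprop}, the key observation being that an interior element of a segment survives any single shrinking operation at either end of the segment, provided it is not itself the element being removed.
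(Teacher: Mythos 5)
Your proof is correct and follows exactly the route the paper indicates: deduce the first claim from the description \eqref{eq: Lrho} of $L_\rho(Z(\m))$, then pass to $D_{\rho;\rho'}=R_{\rho'}\circ L_\rho$ via \eqref{eq: DAB} and use \eqref{eq: Rrho}. The paper's own proof is just the one-line pointer to these three equations; you have simply spelled out the bookkeeping (including the observation that the excluded case is precisely $\rho=\rho'=e(\Delta)$).
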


Indeed, the first statement follows from \eqref{eq: Lrho}. The second statement follows from \eqref{eq: DAB} and \eqref{eq: Rrho}.

\subsection{Ladder representations}
We define a partial order on $\Cusp$ by $\rho_1\le\rho_2$ if $\rho_2=\rho_1[n]$ for some $n\in\Z_{\ge0}$.

Recall that a \emph{ladder} (cf.~\cite{MR3163355}) is a multisegment of the form
\begin{equation} \label{eq: decep}
\m=\Delta_1+\dots+\Delta_k\text{ with }b(\Delta_k)<\dots<b(\Delta_1)\text{ and }e(\Delta_k)<\dots<e(\Delta_1).
\end{equation}
By \cite{MR2996769}, if $\m$ is a ladder then
\begin{equation} \label{eq: comladder}
\JacGL(Z(\m))=\sum_{\substack{\rho_i\in\lextend\Delta_i,\\\rho_1>\dots>\rho_k}}
Z(\sum_{i=1}^k[b(\Delta_i),\rho_i])\otimes Z(\sum_{i=1}^k[\rshft{\rho_i},e(\Delta_i)])
\end{equation}
where each summand is the tensor product of two ladders.
In particular,
\begin{equation} \label{eq: ladlftred}
\lnrset(\m)=\{b(\Delta_i):1\le i\le k\}\setminus \{b(\lshft{\Delta_i}):1\le i\le k\}.
\end{equation}
We will need another fact
\begin{lemma} \label{lem: length2}
Suppose that $\m$ is a ladder as in \eqref{eq: decep} and let $\Delta_{k+1}\in\Seg$ with $\Delta_{k+1}\prec\Delta_k$.
Then we have a short exact sequence
\[
0\rightarrow Z(\n)\rightarrow Z(\Delta_{k+1})\times Z(\m)\rightarrow Z(\Delta_1+\dots+\Delta_{k+1})\rightarrow0
\]
where
\[
\n=\Delta_1+\dots+\Delta_{k-1}+\Delta_k\cup\Delta_{k+1}+\Delta_k\cap\Delta_{k+1}.
\]
Moreover,
\[
Z(\Delta_{k+1}\setminus\Delta_k)\otimes\tau\le \comult(Z(\n))\text{ for some }\tau\in\IrrGL.
\]
\end{lemma}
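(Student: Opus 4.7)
\emph{Plan.} The proof splits into two parts: establishing the short exact sequence, and verifying the ``moreover'' clause.

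For the short exact sequence I first produce a surjection $Z(\Delta_{k+1})\times Z(\m)\twoheadrightarrow Z(\Delta_1+\dots+\Delta_{k+1})$. Observe that $\m+\Delta_{k+1}$ is itself a ladder: from $\Delta_{k+1}\prec\Delta_k$ one obtains $b(\Delta_{k+1})<b(\Delta_k)$ and $e(\Delta_{k+1})<e(\Delta_k)$, extending the strict chains in \eqref{eq: decep}. Via Bernstein's second adjointness, the surjection reduces to producing $Z(\m)\otimes Z(\Delta_{k+1})$ as a subrepresentation of the $(\deg\m,\deg\Delta_{k+1})$-Jacquet module of $Z(\m+\Delta_{k+1})$. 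Applying \eqref{eq: comladder} to the ladder $\m+\Delta_{k+1}$, the tuple $\rho_i=e(\Delta_i)$ for $i\le k$ and $\rho_{k+1}=\lshft{b(\Delta_{k+1})}$ yields exactly this summand, and the required strict chain $\rho_k>\rho_{k+1}$ reduces to $e(\Delta_k)>b(\Delta_{k+1})-1$, immediate from $b(\Delta_{k+1})<b(\Delta_k)\le e(\Delta_k)$. Upgrading the Grothendieck-level appearance to an honest subrepresentation relies on the well-known socle structure of Jacquet modules for ladder representations.

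Next I show that $Z(\Delta_{k+1})\times Z(\m)$ has length exactly $2$, with composition factors $Z(\m+\Delta_{k+1})$ and $Z(\n)$. I would proceed by induction on $k$. The base case $k=1$ is the classical Zelevinsky short exact sequence for two linked segments. For $k\ge 2$, the ladder embedding $Z(\m)\hookrightarrow Z(\Delta_1)\times Z(\m'')$ with $\m''=\Delta_2+\dots+\Delta_k$, combined with a case analysis on whether $\Delta_{k+1}$ is linked to $\Delta_1$, reduces matters to the inductive hypothesis on the smaller ladder $\m''$. This is the principal obstacle: if $\Delta_{k+1}$ is linked to several $\Delta_j$ with $j<k$ (which can occur when the $\Delta_j$'s have large overlap with $\Delta_{k+1}$), bounding the Jordan--H\"older length requires careful bookkeeping via \eqref{eq: comladder}. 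Once length two is established, the first step exhibits $Z(\m+\Delta_{k+1})$ as a quotient, so $Z(\n)$ is the kernel, giving the SES.

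Finally, for the ``moreover'' clause, use the identity $\comult(Z(\n))=\comult(Z(\Delta_{k+1})\times Z(\m))-\comult(Z(\m+\Delta_{k+1}))$ in the Grothendieck group. In $\comult(Z(\Delta_{k+1})\times Z(\m))=\comult(Z(\Delta_{k+1}))\cdot\comult(Z(\m))$, the $\rho=\lshft{b(\Delta_k)}$ term of \eqref{eq: Jacseg} for $Z(\Delta_{k+1})$ multiplied by the trivial term $\one\otimes Z(\m)$ of \eqref{eq: comladder} for $Z(\m)$ produces the summand $Z(\Delta_{k+1}\setminus\Delta_k)\otimes(Z(\Delta_k\cap\Delta_{k+1})\times Z(\m))$. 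On the other hand, applying \eqref{eq: comladder} to $\m+\Delta_{k+1}$, any summand whose left tensor factor equals $Z(\Delta_{k+1}\setminus\Delta_k)=Z([b(\Delta_{k+1}),\lshft{b(\Delta_k)}])$ would force $\rho_{k+1}=\lshft{b(\Delta_k)}$ and $\rho_i=\lshft{b(\Delta_i)}$ for $i\le k$, whence $\rho_k=\rho_{k+1}$, contradicting strict decrease; no such term occurs in $\comult(Z(\m+\Delta_{k+1}))$. Subtracting, $Z(\Delta_{k+1}\setminus\Delta_k)\otimes\tau\le\comult(Z(\n))$ for some irreducible constituent $\tau$ of $Z(\Delta_k\cap\Delta_{k+1})\times Z(\m)$.
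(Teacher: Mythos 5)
The principal gap is that you never actually prove that $\Pi=Z(\Delta_{k+1})\times Z(\m)$ has length two, which is the heart of the lemma. You describe a strategy (induct on $k$, embed $Z(\m)\hookrightarrow Z(\Delta_1)\times Z(\m'')$, case-split on linkedness with $\Delta_1$) and then candidly acknowledge that this is ``the principal obstacle'' requiring ``careful bookkeeping,'' without carrying it out. In fact this reduction does not cleanly close: from $Z(\Delta_{k+1})\times Z(\m)\hookrightarrow Z(\Delta_1)\times\bigl(Z(\Delta_{k+1})\times Z(\m'')\bigr)$ the inductive hypothesis bounds the length of the inner factor, but the outer product with $Z(\Delta_1)$ can have many more constituents, so you get no upper bound on $\ell(\Pi)$ this way. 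The paper's argument is genuinely different: it inducts on $\deg\m$ and bounds $\ell(\Pi)$ by counting, for each $\rho\in\lnrset(\Pi)$, the length $\ell_\rho(\Pi)=\ell(\JacGLnm_{\{\rho\};*}(\Pi))$ using the explicit ladder Jacquet module formula \eqref{eq: comladder}, then comparing the total $\sum_\rho\ell_\rho(\Pi)$ against $\#\lnrset(\m')+\#\lnrset(\n)$. This is the nontrivial step, and your proposal replaces it with a placeholder.

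There is also a secondary gap: even granting length two, you assert that the kernel of $\Pi\twoheadrightarrow Z(\m')$ is $Z(\n)$ without argument. Length two plus the identified quotient tells you there is \emph{some} irreducible kernel, but not which one. The paper identifies it by computing $\soc(\Pi)$ directly: it embeds $\Pi\hookrightarrow Z(\Delta_{k+1}\setminus\Delta_k)\times\tau$ with $\tau=Z(\Delta_k\cap\Delta_{k+1})\times Z(\m)$ irreducible (a cited result), and then invokes the socle recipe of \cite[Prop.~5.6, Thm.~5.11]{MR3573961} to get $\soc(\Pi)=Z(\n)$; this same embedding is what yields the ``moreover'' clause by Frobenius reciprocity. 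Your Grothendieck-group subtraction argument for the ``moreover'' clause is correct and a valid alternative to the paper's socle/Frobenius route, but it presupposes both the length-two statement and the identification of the second constituent as $Z(\n)$, neither of which your proposal establishes. Your production of the surjection $\Pi\twoheadrightarrow Z(\m')$ is essentially fine, though the paper's route (dualize and embed $\Pi^\vee$ into $\std{\m'^\vee}$, read off the socle) is cleaner than passing through second adjointness and Jacquet-module socles.
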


\begin{proof}
Let $\Pi=Z(\Delta_{k+1})\times Z(\m)$ and $\m'=\Delta_1+\dots+\Delta_{k+1}$. Note that
\[
\Pi^\vee\simeq Z(\Delta_{k+1}^\vee)\times Z(\m^\vee)\hookrightarrow Z(\Delta_{k+1}^\vee)\times\std{\m^\vee}=\std{\m'^\vee}.
\]
Thus, $\soc(\Pi^\vee)=Z(\m'^\vee)$, or equivalently
\[
\Pi\twoheadrightarrow Z(\m').
\]
On the other hand, let $\tau=Z(\Delta_k\cap\Delta_{k+1})\times Z(\m)$ which is irreducible.
(For instance, this easily follows from \cite[Corollary 5.14]{MR3573961}.)
Then
\[
\Pi\hookrightarrow Z(\Delta_{k+1}\setminus\Delta_k)\times\tau
\]
and by the recipe of \cite[Proposition 5.6 and Theorem 5.11]{MR3573961}
(which easily reduces to the case where $\Delta_{k+1}\setminus\Delta_k$ is a singleton) we have
\[
\soc(\Pi)=\soc(Z(\Delta_{k+1}\setminus\Delta_k)\times\tau)=Z(\n).
\]
We conclude that
\[
\JacGL(Z(\n))\twoheadrightarrow Z(\Delta_{k+1}\setminus\Delta_k)\otimes\tau.
\]
To finish the proof of the lemma, it remains to show that $\Pi$ is of length two.
We will prove it by induction on $\deg\m$. The base of the induction is trivial.

For the induction step, note that by \eqref{eq: redprod} and \eqref{eq: ladlftred} we have
\[
\lnrset(\Pi)=b(\Delta_{k+1})\cup\lnrset(\m)=b(\Delta_{k+1})\cup\lnrset(\m')
\]
and
\[
\lnrset(\m')=\lnrset(\m)\cup
\begin{cases}b(\Delta_{k+1})&\text{if }b(\Delta_{k+1})\ne b(\lshft{\Delta_k}),\\\emptyset&\text{otherwise.}\end{cases}
\]
Moreover, by \eqref{eq: comladder} and the geometric lemma, for any $\rho\in\lnrset(\Pi)$ we have
\[
\JacGLnm_{\{\rho\};*}(\Pi)=\rho\otimes\begin{cases}Z(\lshrink\Delta_{k+1})\times Z(\m)&\text{if }\rho=b(\Delta_{k+1}),\\
Z(\Delta_{k+1})\times Z(\m^{(i)})&\text{if }\rho=b(\Delta_i),i=1,\dots,k,\end{cases}
\]
where $\m^{(i)}=\Delta_1'+\dots+\Delta'_k$ with $\Delta_i'=\lshrink\Delta_i$ and $\Delta'_j=\Delta_j$ if $j\ne i$.
Note that $\m^{(i)}$ is a ladder since $b(\rshft{\Delta_i})\ne b(\Delta_{i-1})$ if $b(\Delta_i)\in\lnrset(\m)$.
Thus, by induction hypothesis we have for any $\rho\in\lnrset(\Pi)$
\[
\ell_\rho(\Pi):=\ell(\JacGLnm_{\{\rho\};*}(\Pi))=\begin{cases}1&\text{if }\rho=b(\Delta_{k+1})=b(\lshft{\Delta}_k)\text{ or }\rho=b(\Delta_k)=e(\rshft{\Delta}_{k+1}),\\
2&\text{otherwise.}\end{cases}
\]
On the other hand, it follows from \eqref{eq: ifembedlnr}, \eqref{eq: ladlftred} and the fact that
\[
Z(\n)\hookrightarrow Z(\Delta_1+\dots+\Delta_{k-1}+\Delta_k\cup\Delta_{k+1})\times Z(\Delta_k\cap\Delta_{k+1})
\]
and
\[
Z(\n)\hookrightarrow Z(\Delta_1+\dots+\Delta_{k-1}+\Delta_k\cap\Delta_{k+1})\times Z(\Delta_k\cup\Delta_{k+1})
\]
that
\[
\lnrset(\n)=\lnrset(\Pi)\setminus\begin{cases}b(\Delta_k)&\text{if }b(\Delta_k)=e(\rshft{\Delta}_{k+1}),\\
\emptyset&\text{otherwise.}\end{cases}
\]
It follows that
\[
\sum_{\rho\in\lnrset(\Pi)}\ell_\rho(\Pi)=\#\lnrset(\m')+\#\lnrset(\n)
\]
and in particular,
\[
\sum_{\rho\in\lnrset(\Pi)}\ell_\rho(\Pi)\le \sum_{\rho\in\lnrset(\m')}\ell_\rho(Z(\m'))+\sum_{\rho\in\lnrset(\n)}\ell_\rho(Z(\n)).
\]
This implies that $\Pi\le Z(\m')+Z(\n)$ since for any $0\ne\pi'\le\Pi$ we have $\JacGLnm_{\{\rho\};*}(\pi')\ne0$
for some $\rho\in\lnrset(\Pi)$. Thus $\Pi$ is of length two.
\end{proof}

Passing to the contragredient, we get
\begin{corollary} \label{cor: length2}
Suppose that $\m=\Delta_1+\dots+\Delta_k$ is a ladder and $\Delta_1\prec\Delta_0$. Then we have a short exact sequence
\[
0\rightarrow Z(\n)\rightarrow Z(\m)\times Z(\Delta_0)\rightarrow Z(\Delta_0+\dots+\Delta_k)\rightarrow0
\]
where
\[
\n=\Delta_0\cup\Delta_1+\Delta_0\cap\Delta_1+\Delta_2+\dots+\Delta_k.
\]
Moreover,
\[
\tau\otimes Z(\Delta_0\setminus\Delta_1)\le \comult(Z(\n))\text{ for some }\tau\in\IrrGL.
\]
\end{corollary}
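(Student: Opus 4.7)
The plan is to derive the Corollary by applying the contragredient functor $(\cdot)^\vee$ to Lemma~\ref{lem: length2}.

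First I would verify that $\m^\vee = \Delta_1^\vee + \cdots + \Delta_k^\vee$ becomes a ladder when its segments are listed in the reverse order $\Delta_k^\vee, \Delta_{k-1}^\vee, \ldots, \Delta_1^\vee$. This reduces to $b(\Delta_i^\vee) = e(\Delta_i)^\vee$, $e(\Delta_i^\vee) = b(\Delta_i)^\vee$ together with the fact that $\rho \mapsto \rho^\vee$ reverses the order on $\CuspGL$; under this reordering, the last (smallest) segment of the dual ladder is $\Delta_1^\vee$. A direct unfolding of the definition of $\prec$ shows that $\Delta_1 \prec \Delta_0$ is equivalent to $\Delta_0^\vee \prec \Delta_1^\vee$. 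Thus Lemma~\ref{lem: length2} applies to the pair $(\m^\vee, \Delta_0^\vee)$, with $\Delta_0^\vee$ in the role of $\Delta_{k+1}$.

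Next, using $(A\cup B)^\vee = A^\vee \cup B^\vee$ and $(A \cap B)^\vee = A^\vee \cap B^\vee$ for segments, one checks that the auxiliary multisegment produced by the Lemma coincides with $\n^\vee$. The Lemma thus yields
\[
0 \to Z(\n^\vee) \to Z(\Delta_0^\vee) \times Z(\m^\vee) \to Z((\m+\Delta_0)^\vee) \to 0.
\]
Applying the exact contravariant functor $(\cdot)^\vee$ to this sequence, using $Z(\mu)^\vee = Z(\mu^\vee)$ and the fact that parabolic induction commutes with contragredient, translates it into the desired SES of the Corollary, once the factor order in $\times$ and the direction of the arrows are tracked carefully.

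The ``Moreover'' assertion is obtained analogously, by dualizing the corresponding ``Moreover'' clause of Lemma~\ref{lem: length2} applied to $(\m^\vee, \Delta_0^\vee)$. This uses the standard compatibility of the Jacquet functor $\comult$ with $(\cdot)^\vee$ (which swaps the two tensor factors) together with the identity $Z((\Delta_0 \setminus \Delta_1)^\vee) = Z(\Delta_0 \setminus \Delta_1)^\vee$, and gives $\tau \otimes Z(\Delta_0 \setminus \Delta_1) \le \comult(Z(\n))$ with $\tau = (\tau')^\vee$ for $\tau'$ supplied by Lemma~\ref{lem: length2}. The main delicate point will be reconciling the contravariance of $(\cdot)^\vee$ (which swaps sub and quotient in an SES) with the non-symmetric interaction of $(\cdot)^\vee$ and the product $\times$ on $\RepsGL$, so that the dualized sequence matches the stated form of the Corollary.
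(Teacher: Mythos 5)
Your reduction of the hypotheses is correct: $\m^\vee$ (listed as $\Delta_k^\vee,\dots,\Delta_1^\vee$) is a ladder, $\Delta_1\prec\Delta_0$ is equivalent to $\Delta_0^\vee\prec\Delta_1^\vee$, the auxiliary multisegment of Lemma~\ref{lem: length2} applied to $(\m^\vee,\Delta_0^\vee)$ is $\n^\vee$, and the ``Moreover'' clause dualizes as you say because on the level of the Grothendieck group $\comult$ and $^\vee$ interact by swapping the two tensor slots. However, the way you propose to transport the short exact sequence itself does not work, and this is a genuine gap rather than a book-keeping issue.

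Applying the contravariant exact functor $(\cdot)^\vee$ to the Lemma's sequence
\[
0\to Z(\n^\vee)\to Z(\Delta_0^\vee)\times Z(\m^\vee)\to Z(\Delta_0^\vee+\dots+\Delta_k^\vee)\to0
\]
necessarily swaps sub and quotient, producing a sequence with $Z(\Delta_0+\dots+\Delta_k)$ as the submodule and $Z(\n)$ as the quotient. The behavior of $^\vee$ on the middle term does not undo this swap: with the paper's convention ``parabolic induction commutes with $^\vee$'' the middle term becomes $Z(\Delta_0)\times Z(\m)$, which is a \emph{different} (non-isomorphic, in the length-two regime) module from $Z(\m)\times Z(\Delta_0)$, and even if one used the other common convention the resulting middle term would be $Z(\m)\times Z(\Delta_0)$ but still with the inclusion and projection reversed relative to the Corollary. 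In short, $^\vee$ gives a correct but \emph{different} exact sequence, not the one asserted. There is no amount of careful tracking that reconciles this, because the two sequences are genuinely inequivalent.

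The functor that makes the argument go through is the \emph{covariant} exact equivalence $\iota$ (precomposition with $g\mapsto\tensor[^t]g{^{-1}}$) introduced in \S\ref{sec: classical}. By \eqref{eq: iotaind}, $\iota$ agrees with $^\vee$ on irreducibles and satisfies $\iota(\pi_1\times\pi_2)\simeq\iota(\pi_2)\times\iota(\pi_1)$, and being covariant it preserves the direction of arrows. Applying $\iota$ to the Lemma's sequence for $(\m^\vee,\Delta_0^\vee)$ gives precisely
\[
0\to Z(\n)\to Z(\m)\times Z(\Delta_0)\to Z(\Delta_0+\dots+\Delta_k)\to0,
\]
with both the factor order and the sub/quotient roles coming out as in the Corollary. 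So you should replace ``apply $(\cdot)^\vee$'' by ``apply $\iota$'' in the step that dualizes the exact sequence; your treatment of the ``Moreover'' clause, which lives in the Grothendieck group, is fine with $^\vee$.
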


\section{Classical groups} \label{sec: classical}
Next, we turn to classical groups which are the main object of the paper.
Again, most of the results in this section are standard.
\subsection{}

Let $E$ be either $F$ or a quadratic Galois extension of $F$.
In the former case let $\Galinv$ be the Galois involution of $E/F$.
In the latter case $\Galinv=\id$.

\emph{All the notation of the previous section will be used with respect to $E$.}
Since we work with groups over $F$ this means that formally $\GL_n$ should be replaced by its restriction of scalars with respect to $E/F$.
In order to avoid extra notation we use this convention implicitly throughout.

Let $\iota$ be the involution $g\mapsto\tensor[^t]g{^{-1}}$ of the general linear group and let $\tilde{}$ be the composition
of $\iota$ with $\Galinv$ (which commute). We use the same notation for the induced actions on $\RepsGL$, $\GrGL$, $\Seg$, $\MS{\Seg}$ etc..
(For convenience we sometimes write $\tilde Z(\m)$ for $\widetilde{Z(\m)}=Z(\tilde\m)$.)
Note that $\iota$ is a covariant functor of $\RepsGL$. We have
\begin{subequations}\label{eq: iotaind}
\begin{gather}
\iota(\pi)\simeq\pi^\vee\text{ if $\pi\in\IrrGL$ and hence $\iota$ coincides with the contragredient on $\GrGL$.}\\
\iota(\pi_1\times\pi_2)\simeq\iota(\pi_2)\times\iota(\pi_1)\ \ \ \pi_1,\pi_2\in\RepsGL.
\end{gather}
\end{subequations}
Also, $\expo(\pi^\Galinv)=\expo(\pi)$ and $\expo(\tilde\pi)=-\expo(\pi)$ for any $\pi\in\RepsGL$ which admits a central character.

We consider an anisotropic $\epsilon$-hermitian space $V_0$ over $E$ with $\epsilon\in\{\pm1\}$.
Thus, $V_0$ is trivial in the symplectic case, of dimension $\le4$ in the quadratic case and of dimension $\le2$ in the hermitian case.
We then have a tower $V_n$, $n\ge0$ of $\epsilon$-hermitian spaces where $V_n$ is obtained from $V_0$ by adding
$n$ copies of a hyperbolic plane. Consider the sequence of isometry groups $G_n=\Isom(V_n)$ (of $F$-rank $n$).
(See \cite[Chapitre 1]{MR1041060} for basic facts about classical groups.)
Note that the center of $G_n$ is anisotropic.
Let
\[
\RepsG=\oplus_{n\ge0}\Reps(G_n),\ \ \IrrG=\Irr\RepsG=\dunion_{n\ge0}\Irr G_n,\ \ \CuspG=\dunion_{n\ge0}\Cusp G_n.
\]
(Note that $\Irr G_0\subset\CuspG$ and in particular $\one\in\CuspG$ if $G_0=1$.
On the other hand, if $V_0$ is the $0$-dimensional quadratic space then $\Cusp G_1=\emptyset$.)
As before we write $\deg\pi=n$ if $\pi\in\Irr G_n$.
For any $m\le n$ the stabilizer of a totally isotropic $m$-dimensional subspace $U$ of $V_n$ is a parabolic subgroup of $G_n$, which is defined over $F$
and up to conjugation uniquely determined by $m$. The Levi part is canonically $\GL(U)\times\Isom(U^\perp/U)$ which we identify with $\GL_m\times G_{n-m}$.
This gives rise to parabolic induction
\[
\sdp:\Reps(\GL_m)\times\Reps(G_n)\rightarrow\Reps(G_{n+m}),\ \ n,m\ge0, \ \ \ \sdp:\RepsGL\times\RepsG\rightarrow\RepsG
\]
which are bilinear biexact bifunctors with an associativity constraint.
Thus, $\RepsG$ is a left module category over $\RepsGL$ in the sense of \cite[\S 7.1]{MR3242743}.
The Grothendieck group $\GrG$ of $\RepsG$ becomes a left $\GrGL$-module.
We will continue to denote the action of $\GrGL$ on $\GrG$ by $\sdp$.
This gives rise to an action of $\GrGL\otimes\GrGL$ on $\GrGL\otimes\GrG$ (also denoted by $\sdp$) given by
\[
(\alpha\otimes\beta)\sdp(\gamma\otimes\delta)=\alpha\times\gamma\otimes\beta\sdp\delta.
\]
We have
\begin{equation} \label{eq: indtildeinv}
\JH(\pi\sdp\sigma)=\JH(\tilde\pi\sdp\sigma),\ \ \pi\in\RepsGL,\ \sigma\in\RepsG.
\end{equation}

Once again, the left-adjoint
\[
\JacG:\RepsG\rightarrow\RepsGL\otimes\RepsG=\oplus_{n,m\ge0}\Reps(\GL_n\times G_m)
\]
of $\sdp$ is given by $\oplus_{n\ge0}(\oplus_{n_1+n_2=n}\JacG_{n_1;n_2})$ where
\[
\JacG_{n;m}:\Reps(G_{n+m})\rightarrow\Reps(\GL_n\times G_m),\ \ n,m\ge0
\]
is the normalized Jacquet functor.
On the level of Grothendieck groups we get a map
\[
\comultG:\GrG\rightarrow\GrGL\otimes\GrG.
\]
(By abuse of notation we sometimes consider $\comultG$ as a map from $\RepsG$ via $\JH$.)
Using the geometric lemma (see \cite{MR1356358, MR1739616} and the comments in \S1 and \S15 of \cite{MR1896238}) $\comultG$ satisfies
\begin{equation} \label{eq: Tadic formula}
\comultG(\alpha\sdp\beta)=\comod(\alpha)\sdp\comultG(\beta),\ \ \alpha\in\GrGL,\ \beta\in\GrG
\end{equation}
where
\[
\comod:\GrGL\rightarrow\GrGL\otimes\GrGL
\]
is the ring homomorphism corresponding to the composition of exact functors
\begin{equation} \label{eq: functorcomod}
\RepsGL\xrightarrow{\JacGLs}\RepsGL\otimes\RepsGL\otimes\RepsGL\xrightarrow{\id\otimes s}\RepsGL\otimes\RepsGL\otimes\RepsGL
\xrightarrow{\times\otimes\id}\RepsGL\otimes\RepsGL
\end{equation}
where $s(\alpha\otimes\beta)=\tilde\beta\otimes\alpha$. Thus,
\begin{equation} \label{eq: comod}
\text{if $\comults(\pi)=\sum_i\alpha_i\otimes\beta_i\otimes\gamma_i$ then }\comod(\pi)=\sum_i\alpha_i\times\tilde{\gamma_i}\otimes\beta_i.
\end{equation}
Note that
\[
\comod(\pi)=\comod(\tilde\pi)
\]
in accordance with \eqref{eq: indtildeinv} and \eqref{eq: Tadic formula}.

We will also let
\[
\comodmax:\GrGL\rightarrow\GrGL
\]
be the homomorphism corresponding to
\[
\RepsGL\xrightarrow{\JacGL}\RepsGL\otimes\RepsGL\xrightarrow{s}\RepsGL\otimes\RepsGL\xrightarrow{\times}\RepsGL,
\]
i.e., to the composition of \eqref{eq: functorcomod} with the functor
\[
\RepsGL\otimes\RepsGL\xrightarrow{\id\otimes p_0}\RepsGL\otimes\Reps(\GL_0)=\RepsGL
\]
where $p_0$ is the projection of $\RepsGL=\oplus_{n\ge0}\Reps(\GL_n)$ to $\Reps(\GL_0)$ (the category of finite-dimensional vector spaces).
Thus, $\comodmax=\times\circ s\circ\comult$.
Explicitly,
\begin{equation} \label{eq: comodmax}
\text{if $\comult(\pi)=\sum_i\alpha_i\otimes\beta_i$ then }\comodmax(\pi)=\sum_i\alpha_i\times\tilde{\beta_i}.
\end{equation}

For example, for any $\Delta\in\Seg$ we have (using \eqref{eq: Jacseg})
\begin{equation} \label{eq: comodseg}
\comod(Z(\Delta))=\sum_{\rho,\rho'\in\lextend\Delta:\rho\le\rho'}Z([b(\Delta),\rho])\times\tilde Z([\rshft{\rho'},e(\Delta)])\otimes Z([\rshft{\rho},\rho'])
\end{equation}
and
\begin{equation} \label{eq: comodmaxseg}
\comodmax(Z(\Delta))=\sum_{\rho\in\lextend\Delta}Z([b(\Delta),\rho])\times\tilde Z([\rshft{\rho},e(\Delta)]).
\end{equation}
More generally, for any ladder $\m$ as in \eqref{eq: decep} we have
\begin{equation} \label{eq: comodladder}
\comod(Z(\m))=
\sum_{\substack{\rho_i,\rho'_i\in\lextend\Delta_i\text{ for all }i\\\rho_1>\dots>\rho_k,\rho'_1>\dots>\rho'_k\\
\rho'_i\ge\rho_i\text{ for all }i}}
Z(\sum_{i=1}^k[b(\Delta_i),\rho_i])\times\tilde Z(\sum_{i=1}^k[\rshft{\rho_i'},e(\Delta_i)])\otimes Z(\sum_{i=1}^k[\rshft{\rho_i},\rho'_i])
\end{equation}
and
\begin{equation} \label{eq: comodmaxladder}
\comodmax(Z(\m))=\sum_{\substack{\rho_i\in\lextend\Delta_i\text{ for all }i\\\rho_1>\dots>\rho_k}}
Z(\sum_{i=1}^k[b(\Delta_i),\rho_i])\times\tilde Z(\sum_{i=1}^k[\rshft{\rho_i},e(\Delta_i)]).
\end{equation}

\subsection{}
The following is an immediate consequence of Frobenius reciprocity (cf.~\cite[Lemma 2.5]{MR3573961}).
\begin{lemma} \label{lem: simpleSIcrit}
Suppose that $\pi\in\RepsGL$ and $\sigma\in\RepsG$ are \SI\ and that $\soc(\pi)\otimes\soc(\sigma)$ occurs with multiplicity one in $\comultG(\pi\sdp\sigma)$.
Then $\pi\sdp\sigma$ is also \SI. In particular, (by \eqref{eq: obvious}) $\soc(\pi\sdp\sigma)=\soc(\soc(\pi)\sdp\soc(\sigma))$.
\end{lemma}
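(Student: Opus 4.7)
The plan is to use Frobenius reciprocity together with the simplicity of the socle of $\pi\otimes\sigma$. Set $\pi_0:=\soc(\pi)$ and $\sigma_0:=\soc(\sigma)$. Since $\pi$ and $\sigma$ are both SI, the tensor product $\pi\otimes\sigma$ is SI with simple socle $\pi_0\otimes\sigma_0$, and by exactness of parabolic induction $\pi_0\sdp\sigma_0$ embeds in $\pi\sdp\sigma$, so in particular $\soc(\pi\sdp\sigma)\ne0$. The strategy is to use $\JacG$ as left adjoint to $\sdp$ in order to transfer questions about subobjects of $\pi\sdp\sigma$ to questions about subquotients of $\JacG(\pi\sdp\sigma)$, and then to invoke the multiplicity-one hypothesis twice.

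First I would observe that for any simple subobject $\tau\hookrightarrow\pi\sdp\sigma$, Frobenius reciprocity supplies a nonzero morphism $\JacG(\tau)\to\pi\otimes\sigma$, whose image must contain the simple socle $\pi_0\otimes\sigma_0$ of the target; thus $\pi_0\otimes\sigma_0$ occurs at least once in $\JH(\JacG(\tau))$. Next, by exactness of $\JacG$, any semisimple subobject $\bigoplus_{i=1}^r\tau_i$ of $\pi\sdp\sigma$ yields an embedding $\bigoplus_i\JacG(\tau_i)\hookrightarrow\JacG(\pi\sdp\sigma)$, which contributes multiplicity at least $r$ to $\pi_0\otimes\sigma_0$ in the Jordan--H\"older series of $\JacG(\pi\sdp\sigma)$. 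The hypothesis then forces $r\le1$, so $\soc(\pi\sdp\sigma)$ is simple; call it $\tau_0$.

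For the multiplicity-one assertion in $\JH(\pi\sdp\sigma)$, I would repeat the same bookkeeping in the Grothendieck group: if $\tau_0$ occurs with multiplicity $m$ in $\JH(\pi\sdp\sigma)$, additivity in $\GrG$ combined with the fact that $\pi_0\otimes\sigma_0$ appears at least once in $\JH(\JacG(\tau_0))$ (by the Frobenius argument applied to $\tau_0$ as a subobject of $\pi\sdp\sigma$) implies that $\pi_0\otimes\sigma_0$ occurs with multiplicity at least $m$ in $\JH(\JacG(\pi\sdp\sigma))$. The multiplicity-one hypothesis forces $m=1$, completing the proof that $\pi\sdp\sigma$ is SI. The ``in particular'' clause then follows by applying \eqref{eq: obvious} to the nonzero subobject $\pi_0\sdp\sigma_0\hookrightarrow\pi\sdp\sigma$.

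I do not anticipate a serious obstacle; the one delicate point is that the same multiplicity-one input on the $\JacG$ side must simultaneously control both the length of the socle and the Jordan--H\"older multiplicity of $\tau_0$, which is what forces the Frobenius argument to be applied both to an arbitrary simple subobject and specifically to $\tau_0$.
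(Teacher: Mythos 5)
Your argument is correct and is precisely the Frobenius-reciprocity argument the paper has in mind (the paper gives no proof, merely citing \cite[Lemma 2.5]{MR3573961}, which is the abstract version of exactly this adjunction-plus-exactness bookkeeping). The one tacit step you use, that $\pi\otimes\sigma$ is \SI\ with socle $\soc(\pi)\otimes\soc(\sigma)$, does hold, since $\operatorname{Hom}$-spaces in $\RepsGL\otimes\RepsG$ factor as tensor products of $\operatorname{Hom}$-spaces in the two factors.
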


Let $X\subset\MS{\CuspGL}$.
As before, we write $\JacG_{X;*}$ for the composition of $\JacG$ with
\[
\RepsGL\otimes\RepsG\xrightarrow{\prcspdata_X\otimes\id}\RepsGL_X\otimes\RepsG.
\]
We will also write
\[
\comultG_{X;*}:\GrG\rightarrow\GrGL_X\otimes\GrG
\]
for the corresponding map of Grothendieck groups. Similarly, let
\[
\comod_{X;*}:\GrGL\rightarrow\GrGL_X\otimes\GrGL
\]
be the composition of $\comod$ with $\prcspdata_X\otimes\id$. For any $A\subset\CuspGL$ we have
\begin{equation} \label{eq: comodA}
\comod_{\MS{A};*}=(\times\otimes\id)\circ(\id\otimes s)\otimes\comult_{\MS{A};*;\MS{\tilde A}}
\end{equation}
and
\begin{equation} \label{eq: Tadic formulaA}
\comultG_{\MS{A};*}(\alpha\sdp\beta)=\comod_{\MS{A};*}(\alpha)\sdp\comultG_{\MS{A};*}(\beta),\ \ \alpha\in\GrGL,\ \beta\in\GrG.
\end{equation}

\begin{definition}
Let $\rho\in\CuspGL$.
We say that $\sigma\in\RepsG$ is $\rho$-reduced if $\JacG_{\{\rho\};*}(\sigma)=0$.
For a subset $A\subset\CuspGL$ we say that $\sigma\in\RepsG$ is $A$-reduced
if it is $\rho$-reduced for all $\rho\in A$.
\end{definition}

Note that if $\sigma\in\IrrG$ then $\sigma$ is $\rho$-reduced if and only if there does not exist $\sigma'\in\IrrG$ such that $\sigma\hookrightarrow\rho\sdp\sigma'$.

The following is proved using the same argument as in Lemma \ref{lem: ABder}.
\begin{lemma} \label{lem: derivclass}
For any $\sigma\in\IrrG$ and $A\subset\CuspGL$ there exist $\pi\in\IrrGL$ and $\sigma'\in\IrrG$ such that:
\begin{enumerate}
\item $\sigma\hookrightarrow\pi\sdp\sigma'$.
\item $\supp\pi\subset A$.
\item $\sigma'$ is $A$-reduced.
\end{enumerate}
Moreover, $\sigma'$ is uniquely determined by $\sigma$ and is characterized by the following properties:
\begin{enumerate}
\renewcommand{\theenumi}{\Alph{enumi}}
\item There exists $\alpha\in\IrrGL$ such that $\alpha\otimes\sigma'\le\comultG_{\MS{A};*}(\sigma)$.
\item If $\alpha\otimes\beta\le\comultG_{\MS{A};*}(\sigma)$ with $\alpha\in\IrrGL$ and $\beta\in\IrrG$ then either $\beta=\sigma'$
or $\deg\beta>\deg\sigma'$.
\end{enumerate}
\end{lemma}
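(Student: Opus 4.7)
The plan is to imitate the proof of Lemma \ref{lem: ABder}, replacing the geometric lemma for the general linear group by its classical-group version \eqref{eq: Tadic formulaA}. First I construct $(\pi,\sigma')$ by a maximal-degree argument; then I verify $A$-reducedness of $\sigma'$ from maximality; and finally I extract the characterization (A)--(B) by analyzing $\comultG_{\MS{A};*}(\pi \sdp \sigma')$.

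For existence, pick $\pi \in \IrrGL$ with $\supp \pi \subset A$ of maximal degree such that $\sigma \hookrightarrow \pi \sdp \sigma'$ for some $\sigma' \in \IrrG$. Such $\pi$ exists (take $\pi = \one$) and $\deg\pi$ is bounded by $\deg\sigma$, so a maximum is attained. Frobenius reciprocity gives $\pi \otimes \sigma' \le \comultG(\sigma)$, which proves (A) since $\supp\pi \subset A$. To verify $A$-reducedness of $\sigma'$, suppose $\sigma' \hookrightarrow \rho \sdp \sigma''$ for some $\rho \in A$ and $\sigma'' \in \IrrG$. Then $\sigma \hookrightarrow (\pi \times \rho) \sdp \sigma''$, and writing a composition series of $\pi \times \rho$ and using exactness of $\sdp$, irreducibility of $\sigma$ forces $\sigma \hookrightarrow \pi' \sdp \sigma''$ for some irreducible composition factor $\pi'$ of $\pi \times \rho$. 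Since $\supp\pi' \subset A$ and $\deg\pi' = \deg\pi + \deg\rho > \deg\pi$, this contradicts maximality.

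For (B) and uniqueness, apply \eqref{eq: Tadic formulaA}:
\[
\comultG_{\MS{A};*}(\pi \sdp \sigma') = \comod_{\MS{A};*}(\pi) \sdp \comultG_{\MS{A};*}(\sigma').
\]
The key sub-claim is that $A$-reducedness of $\sigma'$ implies $\comultG_{\MS{A};*}(\sigma') = \one \otimes \sigma'$. To see this, suppose $\alpha \otimes \beta \le \comultG(\sigma')$ with $\supp\alpha \subset A$ and $\alpha \ne \one$. Then $\lnrset(\alpha)$ is a nonempty subset of $\supp\alpha \subset A$; pick $\rho \in \lnrset(\alpha)$, so $\rho \otimes \alpha' \le \JacGL(\alpha)$ for some $\alpha'$. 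The associativity $(\JacGL \otimes \id) \circ \JacG = (\id \otimes \JacG) \circ \JacG$ of iterated Jacquet functors then yields $\rho \otimes \eta \le \comultG(\sigma')$ for some $\eta$, i.e.\ $\JacG_{\{\rho\};*}(\sigma') \ne 0$, contradicting $A$-reducedness. Writing $\comod_{\MS{A};*}(\pi) = \sum_j \mu_j \otimes \nu_j$, we deduce
\[
\comultG_{\MS{A};*}(\pi \sdp \sigma') = \sum_j \mu_j \otimes (\nu_j \sdp \sigma'),
\]
so that any irreducible $\alpha \otimes \beta \le \comultG_{\MS{A};*}(\sigma) \le \comultG_{\MS{A};*}(\pi \sdp \sigma')$ satisfies $\deg\beta \ge \deg\sigma'$, with equality forcing $\nu_j = \one$ and hence $\beta = \sigma'$; this is (B). Uniqueness of $\sigma'$ is then immediate, since $\sigma'$ is singled out as the second tensor factor of minimal degree appearing in $\comultG_{\MS{A};*}(\sigma)$.

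The main technical obstacle is the sub-claim $\comultG_{\MS{A};*}(\sigma') = \one \otimes \sigma'$ for $A$-reduced $\sigma'$; it relies on the transitivity of iterated Jacquet modules and on the basic fact that $\lnrset(\alpha)$ is nonempty whenever $\alpha \ne \one$. Everything else is a direct translation of the argument in Lemma \ref{lem: ABder}.
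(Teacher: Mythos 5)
Your proposal is correct and follows the same route the paper indicates (the paper literally says this lemma ``is proved using the same argument as in Lemma~\ref{lem: ABder}''). Your construction of $\sigma'$ via a maximal-degree argument, the verification of $A$-reducedness using irreducibility of $\sigma$ and a composition series of $\pi\times\rho$, the key sub-claim $\comultG_{\MS{A};*}(\sigma')=\one\otimes\sigma'$ via transitivity of Jacquet functors, and the extraction of (A)--(B) from \eqref{eq: Tadic formulaA} all match the intended argument.
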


We write $D_A(\sigma)=\sigma'$.

We will need the following result which is a direct consequence of \cite[Proposition 1.3]{MR2504024}.
\begin{lemma} \label{lem: noisotypic}
For any $\pi\in\IrrGL$ and $\sigma\in\IrrG$ there exists $\sigma'\in\IrrG$ which occurs with multiplicity one in $\JH(\pi\sdp\sigma)$.
\end{lemma}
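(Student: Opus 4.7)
The plan is to deduce the statement from \cite[Proposition 1.3]{MR2504024}, which produces a canonical multiplicity-one irreducible subquotient in an induced representation of this form, and to massage the consequence back to $\pi\sdp\sigma$ itself.

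Concretely, I would first realize $\pi$ as the cosocle of its Langlands standard module: pick a multisegment $\m=\Delta_1+\dots+\Delta_k$ (enumerated so that $\Delta_i\not\prec\Delta_j$ whenever $i<j$) such that $\cstd{\m}=L(\Delta_1)\times\dots\times L(\Delta_k)$ is \CSI\ with cosocle $L(\m)=\pi$. Exactness of $\sdp$ then yields a surjection
\[
\cstd{\m}\sdp\sigma\twoheadrightarrow\pi\sdp\sigma,
\]
so $\JH(\pi\sdp\sigma)\le\JH(\cstd{\m}\sdp\sigma)$ in $\MS{\IrrG}$. It therefore suffices to produce $\sigma'\in\IrrG$ occurring with multiplicity exactly one in $\JH(\cstd{\m}\sdp\sigma)$ and also appearing in $\JH(\pi\sdp\sigma)$.

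For this I would apply the cited proposition to the larger induced representation $L(\Delta_1)\times\dots\times L(\Delta_k)\sdp\sigma$, which asserts that its cosocle is irreducible; call it $\sigma'$. Then $\sigma'$ appears with multiplicity one in $\JH(\cstd{\m}\sdp\sigma)$, hence with multiplicity at most one in the smaller Jordan--H\"older multiset $\JH(\pi\sdp\sigma)$. On the other hand, since $\pi\sdp\sigma$ is a nonzero quotient of $\cstd{\m}\sdp\sigma$, its cosocle is itself a nonzero quotient of $\cos(\cstd{\m}\sdp\sigma)=\sigma'$; simplicity of $\sigma'$ forces $\cos(\pi\sdp\sigma)=\sigma'$, so $\sigma'$ does appear in $\JH(\pi\sdp\sigma)$, necessarily with multiplicity exactly one.

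The only substantive ingredient, and the expected main obstacle for a direct proof, is the simplicity of the cosocle of $\cstd{\m}\sdp\sigma$; this is precisely the content of \cite[Proposition 1.3]{MR2504024} and rests on Langlands-classification input for classical groups together with standard intertwining-operator techniques. Everything else in the argument is purely formal, using only exactness of $\sdp$ and the elementary fact that cosocles are functorial under surjections.
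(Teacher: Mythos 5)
Your central step misstates Tadi\'c's Proposition 1.3 and relies on a claim that is false in general. You assert that Proposition 1.3 says the cosocle of $\cstd{\m}\sdp\sigma$ is irreducible. It does not: in the form the paper records (after applying $^t$) it is the Grothendieck-group inequality $Z(\m+\m';\theta)\le Z(\m)\sdp Z(\m';\theta)$ for multisegments $\m,\m'$ and $\theta\in\IrrctmpG$ --- a statement about a distinguished subquotient of $\pi\sdp\sigma$ itself, built from the combined Langlands data of $\pi$ and $\sigma$, not about the cosocle of the inflated module $\cstd{\m}\sdp\sigma$. The representation $\cstd{\m}\sdp\sigma$ is not a standard module for the classical group: the segments in $\m$ (the $\GL$-Langlands data of $\pi$) need not be positive, and $\sigma$ need not be tempered, so the Langlands-classification uniqueness of the cosocle does not apply to it.

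Moreover the cosocle irreducibility you need is genuinely false. Take $\pi=\alpha$ a unitary self-dual supercuspidal representation ($\tilde\alpha=\alpha$, $\expo(\alpha)=0$) with $\alpha\in\cuspred_\sigma$ for some $\sigma\in\CuspG$. Then $\m=\{\alpha\}$, so $\cstd{\m}=\alpha=\pi$, and $\cstd{\m}\sdp\sigma=\alpha\sdp\sigma$ is unitarily induced, hence semisimple, and reducible of length two; its cosocle is therefore the whole length-two module, not simple. The lemma is still true here --- the two constituents are inequivalent, so each occurs with multiplicity one --- but your argument cannot see this, because there is no single irreducible cosocle to point to. Two smaller gaps compound the problem: an irreducible cosocle does not by itself force multiplicity one in $\JH$ (you need the full \CSI\ property), and a multiplicity-one subquotient of $\cstd{\m}\sdp\sigma$ that is not its cosocle need not survive in the quotient $\pi\sdp\sigma$, so the ``massage back'' step uses exactly the structural fact you lack. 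The paper's intended argument is the direct application of Proposition 1.3 to $\pi\sdp\sigma$ itself, with $\pi=L(\m)$ and $\sigma$ written in its Langlands data; no detour through a standard-module inflation is needed.
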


\begin{corollary} \label{cor: redAAvee}
Suppose that $\pi\sdp\sigma$ is irreducible.
Then $D_{A;\tilde A}(\pi)\sdp D_A(\sigma)=D_A(\pi\sdp\sigma)$.
In particular, $D_{A;\tilde A}(\pi)\sdp D_A(\sigma)$ is irreducible.
\end{corollary}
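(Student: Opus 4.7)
The plan is to analyze $\comultG_{\MS{A};*}(\pi\sdp\sigma)$ and use the minimality clauses in Lemmas~\ref{lem: ABder} and~\ref{lem: derivclass} to pin down its ``minimum second-factor degree'' piece. Set $\Sigma=\pi\sdp\sigma$. By \eqref{eq: Tadic formulaA},
\[
\comultG_{\MS{A};*}(\Sigma)=\comod_{\MS{A};*}(\pi)\sdp\comultG_{\MS{A};*}(\sigma).
\]
Combining \eqref{eq: comodA} with the minimality clause (B) of Lemma~\ref{lem: ABder} applied to $\pi$ with the pair $(A,\tilde A)$, I would first check that every irreducible summand $\alpha\otimes\beta$ of $\comod_{\MS{A};*}(\pi)$ in $\GrGL\otimes\GrGL$ satisfies $\deg\beta\ge\deg D_{A;\tilde A}(\pi)$, with equality only when $\beta=D_{A;\tilde A}(\pi)$. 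Likewise, Lemma~\ref{lem: derivclass} applied to $\sigma$ ensures that every irreducible summand $\gamma\otimes\mu$ of $\comultG_{\MS{A};*}(\sigma)$ satisfies $\deg\mu\ge\deg D_A(\sigma)$, with equality only when $\mu=D_A(\sigma)$.

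Multiplying the two expansions using $(\alpha\otimes\beta)\sdp(\gamma\otimes\mu)=(\alpha\times\gamma)\otimes(\beta\sdp\mu)$, I would conclude that the only contribution to $\comultG_{\MS{A};*}(\Sigma)$ with second-factor degree equal to the minimum $\deg D_{A;\tilde A}(\pi)+\deg D_A(\sigma)$ has the shape $\Xi\otimes\bigl(D_{A;\tilde A}(\pi)\sdp D_A(\sigma)\bigr)$ for some non-zero $\Xi\in\GrGL$, while every other summand has strictly larger second-factor degree. Since $\Sigma\in\IrrG$, Lemma~\ref{lem: derivclass} applied directly to $\Sigma$ identifies $D_A(\Sigma)$ as the unique irreducible occurring with minimum second-factor degree in $\comultG_{\MS{A};*}(\Sigma)$. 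It follows that $\deg D_A(\Sigma)=\deg D_{A;\tilde A}(\pi)+\deg D_A(\sigma)$ and that every Jordan--H\"older constituent of $D_{A;\tilde A}(\pi)\sdp D_A(\sigma)$ must equal $D_A(\Sigma)$; in other words, $D_{A;\tilde A}(\pi)\sdp D_A(\sigma)$ is isotypic of type $D_A(\Sigma)$.

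To conclude, I would apply Lemma~\ref{lem: noisotypic} to the pair $\bigl(D_{A;\tilde A}(\pi),D_A(\sigma)\bigr)\in\IrrGL\times\IrrG$ to produce some Jordan--H\"older constituent of $D_{A;\tilde A}(\pi)\sdp D_A(\sigma)$ occurring with multiplicity one. Together with the isotypic conclusion this forces $D_{A;\tilde A}(\pi)\sdp D_A(\sigma)$ to be irreducible and to coincide with $D_A(\Sigma)=D_A(\pi\sdp\sigma)$, giving both assertions of the corollary.

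The main obstacle is the minimality bookkeeping in the first two paragraphs: one must carefully track that the minimum second-factor degree is attained only by the single pair $(\beta,\mu)=(D_{A;\tilde A}(\pi),D_A(\sigma))$, and, crucially, verify that the coefficient $\Xi$ is genuinely non-zero so that $D_{A;\tilde A}(\pi)\sdp D_A(\sigma)$ really appears in the Jacquet module of $\Sigma$. The non-vanishing of $\Xi$ follows from the existence of the ``main'' embeddings supplied by Lemmas~\ref{lem: ABder} and~\ref{lem: derivclass} together with the fact that $\times$ preserves non-zero elements in $\GrGL$; once this is in place, the conclusion is a clean combination of Lemmas~\ref{lem: derivclass} and~\ref{lem: noisotypic}.
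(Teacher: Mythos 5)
Your proposal is correct and follows the same route as the paper: both use Tadi\'c's formula \eqref{eq: Tadic formulaA} together with \eqref{eq: comodA} and the minimality clauses of Lemmas~\ref{lem: ABder} and~\ref{lem: derivclass} to isolate the minimal second-factor-degree piece of $\comultG_{\MS{A};*}(\pi\sdp\sigma)$, concluding via Lemma~\ref{lem: noisotypic}. You simply spell out in more detail the bookkeeping that the paper leaves implicit (in particular, that every constituent of $D_{A;\tilde A}(\pi)\sdp D_A(\sigma)$ occurs at the minimal degree and hence must equal $D_A(\pi\sdp\sigma)$), but the argument is the same one.
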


\begin{proof}
By Lemma \ref{lem: noisotypic} it is enough to show that
\begin{equation} \label{eq: isot}
D_{A;\tilde A}(\pi)\sdp D_A(\sigma)\text{ is a multiple of }D_A(\pi\sdp\sigma)\text{ in the Grothendieck group}.
\end{equation}
Indeed, it follows from \eqref{eq: comodA}, \eqref{eq: Tadic formulaA} and Lemma \ref{lem: ABder} that
\[
\comultG_{\MS{A};*}(\pi\sdp\sigma)\ge\pi'\otimes D_{A;\tilde A}(\pi)\sdp D_A(\sigma)\text{ for some }\pi'\in\IrrGL
\]
while if $\comultG_{\MS{A};*}(\pi\sdp\sigma)\ge\pi'\otimes\sigma'$ for some $\pi'\in\IrrGL$ and $\sigma'\in\IrrG$ then
$\deg\sigma'\ge\deg D_{A;\tilde A}(\pi)+\deg D_A(\sigma)$.
Hence, \eqref{eq: isot} follows from Lemma \ref{lem: derivclass}.
\end{proof}

\begin{remark} (Compare with Remark \ref{rem: tensorprodGL}.)
Let $A$ be a subset of $\CuspGL$.
Denote by $\RepsG_{\red{A}}$ the Serre subcategory of $\RepsG$ consisting of $A$-reduced representations,
$(\IrrG)_{\red{A}}=\Irr\RepsG_{\red{A}}\subset\IrrG$ and $\GrG_{\red{A}}=\Gr(\RepsG_{\red{A}})\subset\GrG$.
Suppose that $\tilde A=A=\rshft{A}$. Then $\pi\in(\IrrG)_{\red{A}}$ if and only if the cuspidal data of $\pi$ is of the form
$\rho_1+\dots+\rho_k;\sigma$ where $\rho_i\notin A$ for all $i$. (The $\rho_i$'s are defined up to $\tilde{}$.)
The main result of \cite{MR1481814} is that the map
\[
\pi\mapsto (D_A(\pi),D_{A^c}(\pi))
\]
defines a bijection between $\IrrG$ and $\IrrG_{\red{A}}\times\IrrG_{\red{A^c}}$ which induces an isomorphism
\[
\GrG\simeq\GrG_{\red{A}}\otimes\GrG_{\red{A^c}}
\]
of $\GrGL=\GrGL_{\red{A}}\otimes\GrGL_{\red{A^c}}$-modules.
It is likely that there is an equivalence of module categories over $\RepsGL\simeq\RepsGL_{\red{A}}\otimes\RepsGL_{\red{A^c}}$
between $\RepsG$ and $\RepsG_{\red{A}}\otimes\RepsG_{\red{A^c}}$.
See \cite{1502.04357} for a related result.
\end{remark}

\subsection{Classification}
Recall that by Casselman's criterion, a representation $\sigma\in\IrrG$ is tempered if and only if $\expo(\pi)\ge0$ whenever
$\pi\otimes\sigma'\le\comultG(\sigma)$, $\pi\in\IrrGL$, $\sigma'\in\IrrG$.
(Cf. \cite[\S16]{MR1896238} for the even orthogonal case.)
Dually, a representation $\sigma\in\IrrG$ is called \emph{cotempered}\footnote{Note that in \cite{MR2450724} these representations were called negative.
However, we prefer to call them cotempered to emphasize the analogy with tempered representations.}
if $\expo(\pi)\le0$ whenever $\pi\otimes\sigma'\le\comultG(\sigma)$, $\pi\in\IrrGL$, $\sigma'\in\IrrG$.
We denote by $\IrrtmpG$ (resp., $\IrrctmpG$) the set of irreducible tempered (resp., cotempered) representations of $G_n$, $n\ge0$.
Thus, $(\IrrtmpG)^t=\IrrctmpG$.

A segment $\Delta\in\Seg$ is called positive (resp., non-negative) if $\expo(\Delta)>0$ (resp., $\expo(\Delta)\ge0$).
We denote by $\Seg_{>0}$ (resp., $\Seg_{\ge0}$) the set of positive (resp., non-negative) segments.
An element $\m$ of $\MS{\Seg_{>0}}$ (resp., $\MS{\Seg_{\ge0}}$) is called a positive (resp., non-negative) multisegment.
In this case we will also call the representation $Z(\m)$ \Zpstv\ (to emphasize the relation to the Zelevinsky classification).

The Langlands classification for classical groups asserts that for any positive multisegment $\m$ and $\theta\in\IrrtmpG$ the representation
$\cstd{\m}\sdp\theta$ is \CSI\ and the map
\[
(\m,\theta)\mapsto L(\m;\theta):=\cos(\cstd{\m}\sdp\theta)
\]
is a bijection between $\MS{\Seg_{>0}}\times\IrrtmpG$ and $\IrrG$. (See \cite[Appendix]{MR2017065} for the split even orthogonal case.)

Dually, we have the following
\begin{theorem} \cite{MR2450724}\footnote{[loc. cit.] does not treat the unitary case, but it can be handled the same way.
We omit the details.}
\label{thm: Zelclass}
\begin{enumerate}
\item For any positive multisegment $\m$ and $\theta\in\IrrctmpG$ the representation
\[
\std{\m;\theta}:=\std{\m}\sdp\theta
\]
is \SI.
\item The map
\[
(\m,\theta)\mapsto Z(\m;\theta)=\soc\std{\m;\theta}
\]
is a bijection between $\MS{\Seg_{>0}}\times\IrrctmpG$ and $\IrrG$.
\item \label{part: stdirred} $\std{\m}\sdp\theta$ is irreducible if and only if
\begin{enumerate}
\item $Z(\Delta)\sdp\theta$ is irreducible for all $\Delta\le\m$, and,
\item $Z(\Delta)\times Z(\Delta')$ and $Z(\Delta)\times Z(\widetilde{\Delta'})$ are irreducible whenever $\Delta+\Delta'\le\m$.
\end{enumerate}
\item \label{part: Zvee} $Z(\m;\theta)^\vee=Z(\m^{\Galinv};\theta^\vee)$ for any $\m\in\MS{\Seg_{>0}}$ and $\sigma\in\IrrctmpG$.
\item \label{part: invol} $L(\m;\theta)^t=Z(\m;\theta^t)$.
\end{enumerate}
\end{theorem}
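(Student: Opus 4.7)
The plan is to derive Theorem \ref{thm: Zelclass} from the Langlands classification for $G_n$ together with the Aubert--Zelevinsky involution $^t$, using Lemma \ref{lem: simpleSIcrit} as the key technical tool for part (1). I would prove (1) directly, then deduce (5), (2), and (4) as formal consequences, and handle (3) at the end.

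For part (1), order $\m=\Delta_1+\cdots+\Delta_k$ so that $\Delta_i\not\prec\Delta_j$ for $i<j$; then $\std{\m}=Z(\Delta_1)\times\cdots\times Z(\Delta_k)$ is \SI\ with $\soc\std{\m}=Z(\m)$, while $\theta$ is simple and hence trivially \SI. By Lemma \ref{lem: simpleSIcrit} it suffices to show that $Z(\m)\otimes\theta$ occurs with multiplicity one in $\comultG(\std{\m}\sdp\theta)$. Expanding via the Tadi\'c formula \eqref{eq: Tadic formula}, this reduces to enumerating pairs $\alpha\otimes\beta\le\comod(\std{\m})$ and $\gamma\otimes\delta\le\comultG(\theta)$ with $\alpha\times\gamma\ge Z(\m)$ and $\beta\sdp\delta\ge\theta$. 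The cotempered hypothesis forces $\expo(\gamma)\le0$ for every such $\gamma$, while the positivity of $\m$ together with the explicit structure of $\comults(\std{\m})$ obtained by iterating the geometric lemma (as in \eqref{eq: comodseg} for single segments and \eqref{eq: comodladder} for ladders) pins down the exponent profile of the allowed $\alpha$'s; the identity $\expo(\alpha)\deg\alpha+\expo(\gamma)\deg\gamma=\expo(\m)\deg\m$ then singles out $\alpha=Z(\m)$, $\beta=\one$, $\gamma=\one$, $\delta=\theta$, each arising as a multiplicity-one top term in its respective coproduct. The main obstacle is this exponent/combinatorial bookkeeping, which must in particular rule out cancellations coming from non-trivial middle components $\beta$ in $\comults(\std{\m})$.

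Granted (1), for part (5) I would apply the Langlands classification to the tempered datum $\theta^t$: $\cstd{\m}\sdp\theta^t$ is \CSI\ with cosocle $L(\m;\theta^t)$. Since $^t$ sends $L$ to $Z$ on $\GrGL$, exchanges $\IrrtmpG$ with $\IrrctmpG$, and carries $\cstd{\m}\sdp\theta^t$ to $\pm\std{\m}\sdp\theta$ in $\Gr(G)$ (using compatibility of $^t$ with parabolic induction), matching the unique distinguished irreducible constituent on each side yields $L(\m;\theta^t)^t=Z(\m;\theta)$. Part (2) is then the transport of the Langlands bijection through $^t$ on both factors, and part (4) follows from $\cos(\pi)^\vee=\soc(\pi^\vee)$ applied to $\std{\m}\sdp\theta^\vee$, combined with the identification $\std{\m}^\vee\simeq\std{\m^\Galinv}$ coming from \eqref{eq: iotaind} and the commutation of parabolic induction with contragredient.

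For part (3), the ``only if'' direction is immediate from \eqref{eq: obvious} and Frobenius reciprocity: any $Z(\Delta)\sdp\theta$ with $\Delta\le\m$, and any $Z(\Delta)\times Z(\Delta')$ and (via \eqref{eq: indtildeinv}) $Z(\Delta)\times\widetilde{Z(\Delta')}$ with $\Delta+\Delta'\le\m$, embeds as a subquotient of a parabolic induction appearing inside $\std{\m}\sdp\theta$. For the converse, induction on $\ell(\m)$ shows that the listed pairwise irreducibilities allow all factors in $\std{\m}$ to commute with each other and with $\theta$ up to isomorphism, so $\std{\m}\sdp\theta$ is independent of ordering and bracketing, hence coincides with its (unique by (1)) socle $Z(\m;\theta)$. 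The appearance of $\widetilde{Z(\Delta')}$ in the criterion reflects the $s$-twist in \eqref{eq: comod}, whereby a segment of $\m$ can be ``reflected'' through the classical-group factor during the Jacquet analysis.
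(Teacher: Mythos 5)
The paper does not prove parts (1)--(4) of this theorem; it cites them to Hanzer--Mui\'c \cite{MR2450724}. The only part for which the paper supplies an argument is (5), where it uses a Rodier-type induction on the invariant $\beta(\m)=\sum_{i=1}^n(n+1-i)x_i$ built from the exponent vector of $\m$. This is precisely where your argument has a gap. You note that $^t$ carries $\cstd{\m}\sdp\theta^t$ to $\pm\std{\m}\sdp\theta$ in $\Gr(G)$ and then conclude by ``matching the unique distinguished irreducible constituent on each side.'' But $^t$ acts on the Grothendieck group and merely permutes Jordan--H\"older constituents; it is not a priori compatible with taking socle or cosocle, so nothing forces it to carry the cosocle $L(\m;\theta^t)$ of $\cstd{\m}\sdp\theta^t$ to the socle $Z(\m;\theta)$ of $\std{\m}\sdp\theta$. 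That identification \emph{is} the content of part (5), and assuming it begs the question. The paper's induction on $\beta(\m)$ is what closes this gap: one writes $\cstd{\m}\sdp\theta=L(\m;\theta)+\sum_iL(\m_i;\theta_i)$ with $\beta(\m_i)<\beta(\m)$, applies $^t$ and the inductive hypothesis to each lower term, and then invokes the uniqueness in part (2) together with the strict inequality to pin down $L(\m;\theta)^t=Z(\m;\theta^t)$.

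Two secondary remarks. Your argument for (1) via Lemma \ref{lem: simpleSIcrit} together with the exponent bookkeeping for $\comod$ and $\comultG$ is sound and is essentially the argument the paper gives later in Lemma \ref{lem: SI} (of which part (1) is the case $\pi=\std{\m}$), so on that point you and the paper agree. Your sketch of part (4) has a socle/cosocle mismatch: applying $\cos(\pi)^\vee=\soc(\pi^\vee)$ shows that $Z(\m;\theta)^\vee$ is the \emph{cosocle} of the dual standard module $\std{\m^\Galinv}\sdp\theta^\vee$, whereas $Z(\m^\Galinv;\theta^\vee)$ is by definition the \emph{socle} of that same module; these agree only when the module is irreducible. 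An additional argument is required, e.g.\ deducing (4) from (5), the behaviour of $L(\cdot;\cdot)$ under $^\vee$ in the Langlands classification, and the fact that $^t$ and $^\vee$ commute on $\IrrG$.
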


Note that part \ref{part: invol} is implicit in \cite{MR2450724} but it can be proved using a well-known argument of Rodier \cite{MR689531}.
Namely, for a fixed supercuspidal data we prove the relation $L(\m;\theta)^t=Z(\m;\theta^t)$ by induction on $\beta(\m)$
where $\beta(\m)$ is defined as follows. Suppose that $\m=\Delta_1+\dots+\Delta_k$ with $\expo(\Delta_1)\ge\dots\ge\expo(\Delta_k)>0$
and $G$ has rank $n$. Let $(x_1,\dots,x_n)\in\R^n$ be the vector
\[
(\overbrace{\expo(\Delta_1),\dots,\expo(\Delta_1)}^{\deg\Delta_1},\overbrace{\expo(\Delta_2),\dots,\expo(\Delta_2)}^{\deg\Delta_2},\dots,
\overbrace{\expo(\Delta_k),\dots,\expo(\Delta_k)}^{\deg\Delta_k},0,\dots,0).
\]
Then $\beta(\m):=\sum_{i=1}^n (n+1-i)x_i$.
We have (see e.g. \cite{MR2504024})
\[
\cstd{\m}\sdp\theta=L(\m;\theta)+\sum_iL(\m_i;\theta_i)
\]
with $\beta(\m_i)<\beta(\m)$. (The pairs $(\m_i,\theta_i)$ are not necessarily distinct.) Applying $^t$ and the induction hypothesis we get
\[
\std{\m}\sdp\theta^t=L(\m;\theta)^t+\sum_iZ(\m_i;\theta_i^t).
\]
Since $Z(\m;\theta^t)$ occurs in $\std{\m}\sdp\theta^t$ and $L(\m;\theta)^t$ is irreducible we necessarily have
$L(\m;\theta)^t=Z(\m;\theta^t)$ by the uniqueness part of Theorem \ref{thm: Zelclass}.

For any segment $\Delta$ define
\[
\pstv{\Delta}=\begin{cases}\Delta&\expo(\Delta)\ge0,\\\tilde\Delta&\text{otherwise.}\end{cases}
\]
Given any multisegment $\m=\Delta_1+\dots+\Delta_k$ and $\sigma\in\IrrctmpG$ we set
\[
\pstv{\m}=\pstv{\Delta_1}+\dots+\pstv{\Delta_k},\ \ \std{\m;\sigma}=\std{\pstv{\m}}\sdp\sigma,\ \ Z(\m;\sigma)=\soc(\std{\m;\sigma}).
\]
Note that $\std{\m;\sigma}$ is \SI\ if and only if $Z(\m;\sigma)$ is irreducible if and only if $Z(\m_{=0})\sdp\sigma$ is irreducible.

The following is an analogue of \cite[Proposition 1.3]{MR2504024}, which is obtained from it by applying $^t$.
\begin{proposition}
For any $\m,\m'\in\MS{\Seg}$ and $\theta\in\IrrctmpG$ we have
\[
Z(\m+\m';\theta)\le Z(\m)\sdp Z(\m';\theta).
\]
Thus, if $Z(\m)\sdp Z(\m';\theta)$ is irreducible then it is equal to $Z(\m+\m';\theta)$.
\end{proposition}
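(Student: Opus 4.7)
The strategy is to apply the Zelevinsky--Aubert involution $^t$ to [MR2504024, Proposition 1.3], which is the Langlands-classification analogue: for $\m,\m'\in\MS{\Seg_{>0}}$ and $\theta'\in\IrrtmpG$ one has $L(\m+\m';\theta')\le L(\m)\sdp L(\m';\theta')$ in $\Gr(G)$. The dictionary provided by the Zelevinsky--Aubert involution is:
\begin{itemize}
\item $L(\m)^t=Z(\m)$ on $\Gr(\GL)$ (a fact recorded in \S\ref{sec: GLn}),
\item $L(\m;\theta')^t=Z(\m;(\theta')^t)$ on $\Gr(G)$, which is precisely Theorem \ref{thm: Zelclass}\eqref{part: invol},
\item $^t$ interchanges $\IrrtmpG$ and $\IrrctmpG$,
\item $^t$ is a linear involution of $\Gr(G)$ that commutes with parabolic induction $\sdp$ (a standard property of Aubert duality, arising from its definition as an alternating sum over parabolics and the compatibility with induction by stages).
\end{itemize}

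Granting these, apply $^t$ to Jantzen's inequality. The left-hand side becomes $Z(\m+\m';(\theta')^t)$, while the right-hand side becomes $Z(\m)\sdp Z(\m';(\theta')^t)$ (the $\sdp$-compatibility is applied with the left factor in $\GL$ and the right in $G$). Setting $\theta=(\theta')^t\in\IrrctmpG$ yields the desired containment $Z(\m+\m';\theta)\le Z(\m)\sdp Z(\m';\theta)$ for $\m,\m'\in\MS{\Seg_{>0}}$; the extension to arbitrary $\m,\m'\in\MS{\Seg}$ follows from the definition $Z(\m;\theta)=Z(\pstv\m;\theta)$ on the $G$-side combined with \eqref{eq: indtildeinv} on the $\GL$-side, applied segment-by-segment via induction by stages, so that both sides of the inequality depend on $\m,\m'$ only through $\pstv\m,\pstv{\m'}\in\MS{\Seg_{\ge 0}}$ (with the exponent-zero case handled by the same positive argument).

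The main subtlety is that Aubert's involution, as a map of Grothendieck groups, sends irreducibles to irreducibles only up to signs, so the partial order $\le$ is \emph{not} preserved on the nose. This is not a genuine obstruction, however: what $^t$ does preserve is the absolute value of the multiplicity of any irreducible in a given virtual representation. Since $Z(\m)\sdp Z(\m';\theta)$ is an honest (not merely virtual) representation, the multiplicity of $Z(\m+\m';\theta)$ in its Jordan--H\"older series is a non-negative integer, and by Jantzen it is nonzero, hence positive. The concluding clause is immediate: an irreducible representation having $Z(\m+\m';\theta)$ as a subquotient must equal $Z(\m+\m';\theta)$.
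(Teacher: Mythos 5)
Your proof takes the same route as the paper: the proposition is stated there as following immediately from applying the Zelevinsky--Aubert involution $^t$ to \cite[Proposition 1.3]{MR2504024}, using $L(\m)^t=Z(\m)$, Theorem \ref{thm: Zelclass}\eqref{part: invol}, and the compatibility of $^t$ with parabolic induction, exactly as you outline. One remark: the sign subtlety you raise does not actually arise under the paper's conventions, since $^t$ is introduced as the sign-normalized involution on $\Irr G$ (not merely an endomorphism of $\Gr(G)$), and hence preserves $\le$ on the nose; nevertheless your fallback argument via non-negativity of Jordan--H\"older multiplicities is perfectly sound.
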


Specializing to the case $\m'=0$ we get
\begin{corollary} \label{cor: Tadanalog}
For any $\m\in\MS{\Seg}$ and $\theta\in\IrrctmpG$ we have $Z(\m;\theta)\le Z(\m)\sdp\theta$.
Thus, if $Z(\m)\sdp \theta$ is irreducible then it is equal to $Z(\m;\theta)$
and in particular, $Z(\m_{=0})\sdp\theta$ is irreducible.
\end{corollary}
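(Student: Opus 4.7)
The plan is to read this off directly as the specialization $\m' = 0$ of the preceding proposition, once we pin down what $Z(0;\theta)$ is. By the convention $Z(\emptyset) = \one$ and the definition $\std{0;\theta} = \std{\pstv{0}}\sdp\theta = \one\sdp\theta = \theta$, and since $\theta \in \IrrctmpG$ is already irreducible, we have $Z(0;\theta) = \soc(\theta) = \theta$. Substituting this into the inequality $Z(\m + \m';\theta) \le Z(\m)\sdp Z(\m';\theta)$ of the preceding proposition with $\m' = 0$ yields the first assertion $Z(\m;\theta) \le Z(\m)\sdp\theta$.

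For the second assertion, note that $Z(\m;\theta) = \soc(\std{\m;\theta})$ is by construction a nonzero subobject of $Z(\m)\sdp\theta$ in the Grothendieck group. If $Z(\m)\sdp\theta$ is irreducible, then any nonzero element $\le Z(\m)\sdp\theta$ in the ordered Grothendieck group must equal $Z(\m)\sdp\theta$; in particular $Z(\m;\theta) = Z(\m)\sdp\theta$.

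For the final assertion, observe that $Z(\m;\theta) = Z(\m)\sdp\theta$ being irreducible forces $Z(\m;\theta)$ to be irreducible in the strong sense (a single simple constituent rather than a sum of simples in the socle). By the equivalence stated immediately before the corollary, namely that $\std{\m;\sigma}$ is \SI\ if and only if $Z(\m;\sigma)$ is irreducible if and only if $Z(\m_{=0})\sdp\sigma$ is irreducible, we conclude that $Z(\m_{=0})\sdp\theta$ is irreducible.

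There is essentially no obstacle here, since the preceding proposition does all the real work; the only subtlety is the cosmetic identification $Z(0;\theta) = \theta$, which is forced by the conventions $Z(\emptyset) = \one$ and $\one\sdp\theta = \theta$. No induction, geometric lemma, or derivative machinery needs to be invoked for this corollary itself.
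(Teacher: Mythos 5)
Your proposal is correct and takes the same route the paper intends: the paper's entire ``proof'' is the phrase ``Specializing to the case $\m'=0$ we get,'' and you have supplied the details that make this work, namely the identification $Z(0;\theta)=\soc(\one\sdp\theta)=\theta$, the observation that a nonzero element below an irreducible in the ordered Grothendieck group must equal it, and the invocation of the stated equivalence ``$Z(\m;\sigma)$ irreducible $\iff$ $Z(\m_{=0})\sdp\sigma$ irreducible'' for the final clause.
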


\begin{theorem} \cite{MR1658535}\footnote{Once again, this is only stated for odd orthogonal and symplectic groups,
but the same argument works in general} \label{thm: oneseg}
Let $\Delta\in\Seg$ and $\sigma\in\CuspG$. Then the following conditions are equivalent.
\begin{enumerate}
\item $Z(\Delta)\sdp\sigma$ is irreducible.
\item $L(\Delta)\sdp\sigma$ is irreducible.
\item $\rho\sdp\sigma$ is irreducible for every $\rho\in\Delta$.
\end{enumerate}
\end{theorem}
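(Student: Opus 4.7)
The plan is to handle the two equivalences (1) $\Leftrightarrow$ (2) and (1) $\Leftrightarrow$ (3) separately, the first via the Zelevinsky--Aubert involution and the second by induction on $|\Delta|$, with the reverse implication handled by a contrapositive argument.

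For (1) $\Leftrightarrow$ (2), I would invoke the involution $^t$. Since $\sigma$ is supercuspidal, its proper Jacquet modules vanish, so $\sigma^t = \sigma$. As $^t$ is a length-preserving involution of $\GrG$ compatible with parabolic induction and exchanges $Z(\Delta)$ with $L(\Delta)$ on $\GrGL$, we obtain $\ell(L(\Delta) \sdp \sigma) = \ell((Z(\Delta) \sdp \sigma)^t) = \ell(Z(\Delta) \sdp \sigma)$, establishing the equivalence.

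For (3) $\Rightarrow$ (1), I would induct on $|\Delta|$, the base case being trivial. In the inductive step set $\rho = b(\Delta)$ and $\Delta' = \lshrink\Delta$; by the inductive hypothesis $\sigma' := Z(\Delta') \sdp \sigma$ is irreducible. Since $\{\rho\} \prec \Delta'$ with empty intersection, the product $\rho \times Z(\Delta')$ has length two with JH factors $Z(\Delta)$ and $Z(\{\rho\} + \Delta')$, giving in $\GrG$ the identity
\[
\rho \sdp \sigma' = Z(\Delta) \sdp \sigma + Z(\{\rho\} + \Delta') \sdp \sigma.
\]
It therefore suffices to prove $\ell(\rho \sdp \sigma') = 2$. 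Combining Lemma \ref{lem: simpleSIcrit} with Tadic's formula \eqref{eq: Tadic formula} and $\comod(\rho) = \rho \otimes \one + \tilde\rho \otimes \one + \one \otimes \rho$, the hypothesis (3) (which, using $\tilde{\cuspred}_\sigma = \cuspred_\sigma$, yields $\rho, \tilde\rho \notin \cuspred_\sigma$) forces $\rho \otimes \sigma'$ to occur with multiplicity one in $\comultG(\rho \sdp \sigma')$, so $\rho \sdp \sigma'$ is \SI\ and, by the same argument applied to the contragredient, \CSI. A length count combined with the displayed identity then forces length exactly two, whence $Z(\Delta) \sdp \sigma$ is irreducible.

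For (1) $\Rightarrow$ (3), I argue by contrapositive: supposing some $\rho \in \Delta$ satisfies $\rho \in \cuspred_\sigma$, I would exhibit a composition factor of $Z(\Delta) \sdp \sigma$ distinct from $Z(\Delta; \sigma)$, which appears with multiplicity one by Corollary \ref{cor: Tadanalog}. Projecting $\comultG(Z(\Delta) \sdp \sigma)$ onto an appropriate cuspidal datum via \eqref{eq: comodseg}, the reducibility of $\rho \sdp \sigma$ produces extra components in the Jacquet module that cannot all be accounted for by $Z(\Delta; \sigma)$, forcing an additional JH factor. The main obstacle throughout is the Jacquet-module multiplicity analysis in non-generic situations, notably when $\tilde\rho \in \Delta$ (possible by M\oeglin's theorem that $\tilde\rho \in \rho[\Z]$) or when $\rho = \tilde\rho$: the three summands of $\comod(\rho)$ can then collide with components of $\comultG(\sigma')$ and inflate multiplicities; controlling these requires careful use of the constraint $\rho[\R] \cap \cuspred_\sigma = \{\rho, \tilde\rho\}$ noted earlier.
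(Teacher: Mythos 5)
The paper does not prove this theorem --- it is quoted from Tadi\'c \cite{MR1658535} and used as a black box throughout --- so there is nothing internal to compare against; I evaluate your argument on its own terms.

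The equivalence (1)$\Leftrightarrow$(2) via the Zelevinsky--Aubert involution is fine. The trouble is (3)$\Rightarrow$(1). The pivotal claim that $\rho\otimes\sigma'$ occurs with multiplicity one in $\comultG(\rho\sdp\sigma')$ is false when $\tilde\rho=\rho$: one has $\comod(\rho)=\rho\otimes\one+\tilde\rho\otimes\one+\one\otimes\rho=2(\rho\otimes\one)+\one\otimes\rho$, and already the contribution $(2\rho\otimes\one)\sdp(\one\otimes\sigma')=2(\rho\otimes\sigma')$ gives multiplicity at least two, so Lemma~\ref{lem: simpleSIcrit} does not apply. (There is a second collision when $e(\Delta)=\tilde\rho$, coming from the $\one\otimes\rho$ summand of $\comod(\rho)$ acting on nontrivial terms of $\comultG(\sigma')$.) The self-dual case is precisely the one that matters for classical groups, so this is not a peripheral defect; your closing paragraph acknowledges the collision without resolving it. Worse, even where the multiplicity-one hypothesis does hold, \SI\ together with \CSI\ does not bound the length of $\rho\sdp\sigma'$ by two --- an object with simple socle and simple cosocle, each of multiplicity one, can have length three or more --- so the final ``length count'' step is not a valid inference. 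What actually pins down length is a count on a minimal Jacquet module: every constituent has nonzero image under the Jacquet functor to a minimal Levi in the cuspidal support, so the length of that Jacquet module dominates the length of the representation; this is the device used at the end of \S\ref{sec: twosegments}, where the identity $\ell(\JacG_{\min}(\pi\sdp\sigma))=2^N\ell(\JacGLnm_{\min}(\pi))$ appears, and it is the natural replacement for the SI/CSI reasoning.

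The direction (1)$\Rightarrow$(3) is too vague to assess: asserting that reducibility of $\rho\sdp\sigma$ ``produces extra components \dots\ that cannot all be accounted for'' is a claim about multiplicities in $\comultG$ that needs an actual computation (with the self-dual collisions controlled), not a gesture. Note also that you cannot appeal to Theorem~\ref{thm: red} to supply this direction, because the paper's proof of that theorem itself invokes Theorem~\ref{thm: oneseg}.
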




\begin{corollary} \label{cor: tempcase}
Let $\m=\Delta_1+\dots+\Delta_k$ with $\expo(\Delta_i)=0$ for all $i$ and let $\sigma\in\CuspG$.
Let $\Sigma=\std{\m}\sdp\sigma=Z(\m)\sdp\sigma$ and $\Sigma'=\cstd{\m}\sdp\sigma=L(\m)\sdp\sigma$
Then the following conditions are equivalent.
\begin{enumerate}
\item \label{part: totindzele} $\Sigma$ is irreducible (and cotempered).
\item \label{part: indzele} $Z(\Delta_i)\sdp\sigma$ is irreducible for all $i$.
\item \label{part: totindL} $\Sigma'$ is irreducible (and tempered).
\item \label{part: indL} $L(\Delta_i)\sdp\sigma$ is irreducible for all $i$.
\item \label{part: nosupp} $\rho\sdp\sigma$ is irreducible for every $\rho\in\cup_i\Delta_i$.
\end{enumerate}
In this case, $\Sigma^\vee=\std{\m^\Galinv}\sdp\sigma^\vee$ and
$\Sigma'^\vee=\cstd{\m^\Galinv}\sdp\sigma^\vee$.
\end{corollary}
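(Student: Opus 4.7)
My plan is to organize the proof around three equivalences. The outer chain $(\ref{part: indzele}) \Leftrightarrow (\ref{part: indL}) \Leftrightarrow (\ref{part: nosupp})$ is immediate from Theorem \ref{thm: oneseg} applied to each $\Delta_i$ separately. For $(\ref{part: indzele}) \Rightarrow (\ref{part: totindzele})$ I would invoke Theorem \ref{thm: Zelclass}(\ref{part: stdirred}) with $\theta = \sigma$ (cuspidals are cotempered) and $\pstv{\m} = \m$ (since $\expo(\Delta_i) = 0$). Condition (a) of that criterion is precisely (\ref{part: indzele}); for condition (b) I would establish as an auxiliary fact that any two segments of exponent $0$ are unlinked---within a single line $\rho[\Z]$ two such segments share a center and hence are either equal or nested, and segments in different lines are trivially not linked---and then note that $\expo(\widetilde{\Delta_j}) = -\expo(\Delta_j) = 0$, so the same argument applies with $\widetilde{\Delta_j}$ in place of $\Delta_j$. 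This same unlinkedness gives $\std{\m} = Z(\Delta_1) \times \cdots \times Z(\Delta_k) = Z(\m)$, so $\Sigma = Z(\m) \sdp \sigma$ is irreducible.

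The crux is the converse $(\ref{part: totindzele}) \Rightarrow (\ref{part: indzele})$. Assume $\Sigma$ is irreducible and, for contradiction, that $Z(\Delta_i) \sdp \sigma$ is reducible for some $i$. By unlinkedness the factors $Z(\Delta_j)$ commute up to isomorphism, so after reordering I may assume $i = k$ and write
\[
\Sigma \simeq \big(Z(\Delta_1) \times \cdots \times Z(\Delta_{k-1})\big) \sdp \big(Z(\Delta_k) \sdp \sigma\big).
\]
A short exact sequence witnessing the reducibility of the inner $Z(\Delta_k) \sdp \sigma$ then yields, by exactness of parabolic induction together with the nonvanishing of induction on nonzero admissible representations, a proper nonzero subrepresentation of $\Sigma$, contradicting irreducibility.

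For $(\ref{part: totindzele}) \Leftrightarrow (\ref{part: totindL})$ I would apply the Zelevinsky--Aubert involution $^t$. Cuspidal $\sigma$ is fixed by $^t$ (being the unique irreducible in its cuspidal component), and $^t$ exchanges $Z(\m) \leftrightarrow L(\m)$ while commuting with parabolic induction on the Grothendieck group, so $(Z(\m) \sdp \sigma)^t = L(\m) \sdp \sigma$; since $^t$ is a bijection on $\IrrG$, irreducibility transfers. The cotempered label on $\Sigma$ follows from Casselman's criterion: formula \eqref{eq: comodseg} shows that every term $\alpha \times \widetilde{\gamma} \otimes \beta \le \comod(Z(\Delta))$ with $\expo(\Delta) = 0$ satisfies $\expo(\alpha \times \widetilde{\gamma}) \le 0$, and dually $\Sigma'$ is tempered. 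Finally, $\Sigma^\vee = Z(\m^\vee) \sdp \sigma^\vee$ directly, and by \eqref{eq: indtildeinv} this agrees in the Grothendieck group with $\widetilde{Z(\m^\vee)} \sdp \sigma^\vee = Z(\m^\Galinv) \sdp \sigma^\vee = \std{\m^\Galinv} \sdp \sigma^\vee$; irreducibility of $\Sigma^\vee$ upgrades this to an isomorphism of representations, and $\Sigma'^\vee$ is handled identically. The main obstacle I anticipate is the unlinkedness analysis for exponent-$0$ segments $\Delta_j$ and $\widetilde{\Delta_j}$; once that is in hand, the rest of the proof is a direct combination of Theorems \ref{thm: Zelclass} and \ref{thm: oneseg} with the exactness of parabolic induction.
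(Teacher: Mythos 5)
Your organization differs from the paper's, but the crux of your argument, the implication (\ref{part: indzele})~$\Rightarrow$~(\ref{part: totindzele}), has a genuine gap. You invoke Theorem~\ref{thm: Zelclass}(\ref{part: stdirred}), but that theorem, as stated, applies only to \emph{positive} multisegments $\m\in\MS{\Seg_{>0}}$, i.e., $\expo(\Delta)>0$ for every $\Delta\le\m$. Here every $\Delta_i$ has $\expo(\Delta_i)=0$, so $\m$ is non-negative but not positive, and the observation $\pstv{\m}=\m$ does not change this---the hypothesis of the theorem is positivity, not merely $\pstv{\m}=\m$. The whole Zelevinsky-classification machinery (Theorem~\ref{thm: Zelclass}, Corollary~\ref{cor: stdirredcrit}, Lemma~\ref{lem: SI}) rests on $\std{\m;\theta}$ being \SI, which fails at the tempered boundary: for exponent-$0$ $\m$ and cuspidal $\sigma$, the reducible induction $\std{\m}\sdp\sigma$ decomposes as a direct sum of several irreducibles, so its socle is the whole thing and it is not \SI.

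This is not a superficial omission: the direction you are after (equivalently (\ref{part: indL})~$\Rightarrow$~(\ref{part: totindL})) is precisely the statement that tempered parabolic induction from cuspidal data is controlled by rank-one reducibilities, which is the content of $R$-group theory for classical groups. The paper invokes \cite[Theorem~13.1]{MR1896238}, which is built on the work of Goldberg, exactly for this step; it cannot be short-circuited by the Zelevinsky criterion. Your auxiliary observation that exponent-$0$ segments (and their duals) in a common line are nested, hence pairwise unlinked, is correct and worth noting, but it only ensures that $\std{\m}=Z(\m)$ and that the $Z(\Delta_i)$ factors commute; it does not by itself give irreducibility of the induction to $G$. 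The remainder of your argument---the elementary direction (\ref{part: totindzele})~$\Rightarrow$~(\ref{part: indzele}) via exactness and faithfulness of parabolic induction, the chain (\ref{part: indzele})~$\Leftrightarrow$~(\ref{part: indL})~$\Leftrightarrow$~(\ref{part: nosupp}) via Theorem~\ref{thm: oneseg}, the equivalence (\ref{part: totindzele})~$\Leftrightarrow$~(\ref{part: totindL}) via Aubert duality (matching the paper's route), the cotemperedness check via Casselman's criterion and \eqref{eq: comodseg}, and the contragredient formula via \eqref{eq: indtildeinv}---is sound.
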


Indeed, the equivalence of conditions \ref{part: totindL} and \ref{part: indL} (for any $\sigma$ square-integrable) follows from \cite[Theorem 13.1]{MR1896238}
which is based on the results and techniques of Goldberg \cite{MR1296726, MR1224616, MR1335083}.
The equivalence of conditions \ref{part: totindzele} and \ref{part: totindL} follows from the properties of $^t$.
The rest follows from Theorem \ref{thm: oneseg} and \eqref{eq: indtildeinv}.

\begin{definition} \label{def: pi+}
For any multisegment $\m=\Delta_1+\dots+\Delta_k$ we write
\[
\m_{>0}=\sum_{i:\expo(\Delta_i)>0}\Delta_i,
\]
and similarly for $\m_{\ge0}$, $\m_{<0}$, $\m_{\le0}$, $\m_{=0}$.
Thus,
\[
\m=\m_{>0}+\m_{=0}+\m_{<0}=\m_{\ge0}+\m_{<0}=\m_{>0}+\m_{\le0}.
\]
If $\pi=Z(\m)$ then we write $\pi_+=Z(\m_{>0})$.
\end{definition}

The following consequence will be our main tool for proving irreducibility.

\begin{corollary} \label{cor: stdirredcrit}
\begin{enumerate}
\item Let $\m\in\MS{\Seg_{>0}}$, $\theta\in\IrrctmpG$ and $\pi\in\RepsG$. Assume that
\[
\pi\hookrightarrow\std{\m;\theta}
\]
and
\[
\pi^\vee\hookrightarrow\std{\m^\Galinv;\theta^\vee}.
\]
Then $\pi$ is irreducible (and isomorphic to $Z(\m;\theta)$).
\item Assume that $\m\in\MS{\Seg_{\ge0}}$ and $\sigma\in\CuspG$ are such that
$\rho\sdp\sigma$ is irreducible for every $\rho\in\supp\m_{=0}$.
Let $\pi\in\RepsG$ and assume that
\[
\pi\hookrightarrow \std{\m;\sigma}
\]
and
\[
\pi^\vee\hookrightarrow \std{\m^\Galinv;\sigma^\vee}.
\]
Then $\pi$ is irreducible (and isomorphic to $Z(\m;\sigma)=Z(\m_{>0};\std{\m_{=0}}\sdp\sigma)$).
\end{enumerate}
\end{corollary}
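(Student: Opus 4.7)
The plan is to derive part (1) directly from Theorem \ref{thm: Zelclass} together with the simplicity criterion \eqref{eq: pisimple}, and to reduce part (2) to part (1) by absorbing the zero-exponent segments of $\m$ into a cotempered representation via Corollary \ref{cor: tempcase}. For part (1), Theorem \ref{thm: Zelclass}(1) gives that $\std{\m;\theta}$ is \SI\ with socle $Z(\m;\theta)$, and similarly $\std{\m^\Galinv;\theta^\vee}$ is \SI\ with socle $Z(\m^\Galinv;\theta^\vee)$. Applying \eqref{eq: obvious} to the given embeddings yields that both $\pi$ and $\pi^\vee$ are \SI\ with $\soc(\pi) = Z(\m;\theta)$ and $\soc(\pi^\vee) = Z(\m^\Galinv;\theta^\vee)$. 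By Theorem \ref{thm: Zelclass}(\ref{part: Zvee}), $Z(\m;\theta)^\vee = Z(\m^\Galinv;\theta^\vee)$, so $\soc(\pi)^\vee \simeq \soc(\pi^\vee)$, and \eqref{eq: pisimple} then delivers simplicity of $\pi$ together with the identification $\pi \simeq Z(\m;\theta)$.

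For part (2), set $\theta := \std{\m_{=0}} \sdp \sigma$. The hypothesis that $\rho \sdp \sigma$ is irreducible for every $\rho \in \supp \m_{=0}$ places us in the setting of Corollary \ref{cor: tempcase}, so $\theta$ is irreducible and cotempered, with $\theta^\vee = \std{\m_{=0}^\Galinv} \sdp \sigma^\vee$. Since $\Delta_1 \prec \Delta_2$ forces $\expo(\Delta_1) < \expo(\Delta_2)$, no segment of $\m_{>0}$ can be $\prec$-linked to one of $\m_{=0}$; enumerating $\m$ with the positive segments first is then a valid $\not\prec$-ordering, yielding $\std{\m} \simeq \std{\m_{>0}} \times \std{\m_{=0}}$ as representations and hence $\std{\m;\sigma} = \std{\m_{>0};\theta}$. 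Because $\Galinv$ preserves exponents, the parallel factorization gives $\std{\m^\Galinv;\sigma^\vee} = \std{\m_{>0}^\Galinv;\theta^\vee}$, so the hypotheses of part (1) are met for the pair $(\m_{>0}, \theta)$, whence $\pi \simeq Z(\m_{>0};\theta) = Z(\m_{>0}; \std{\m_{=0}} \sdp \sigma)$, which is $Z(\m;\sigma)$ by its very definition.

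The main subtlety is ensuring that $\std{\m;\sigma}$ factors as $\std{\m_{>0};\theta}$ as an actual representation rather than just in the Grothendieck group; the exponent monotonicity of $\prec$ disposes of this cleanly. Beyond that, the argument is essentially formal, with the key conceptual step being the use of \eqref{eq: pisimple} to upgrade one-sided socle information on both $\pi$ and $\pi^\vee$ into genuine irreducibility.
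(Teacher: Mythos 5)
Your proof is correct and follows essentially the same line as the paper's: part (1) is exactly the paper's argument (Theorem \ref{thm: Zelclass}, \eqref{eq: obvious}, part \ref{part: Zvee}, then \eqref{eq: pisimple}), and part (2) is the paper's one-line reduction to part (1) via Corollary \ref{cor: tempcase}, with you supplying the implicit bookkeeping — the exponent-monotonicity of $\prec$ giving the factorization $\std{\m}\simeq\std{\m_{>0}}\times\std{\m_{=0}}$, and $\Galinv$ preserving exponents giving the dual factorization — that the paper leaves to the reader.
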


\begin{proof}
\begin{enumerate}
\item By Theorem \ref{thm: Zelclass} and \eqref{eq: obvious} both $\pi$ and $\pi^\vee$ are \SI\ and
\[
\soc(\pi)^\vee=Z(\m;\theta)^\vee=Z(\m^\Galinv;\theta^\vee)=\soc(\pi^\vee).
\]
Hence $\pi$ is irreducible by \eqref{eq: pisimple}.
\item By Corollary \ref{cor: tempcase} $\std{\m_{=0}}\sdp\sigma$ is irreducible.
The irreducibility of $\pi$ now follows from the first part.\qedhere
\end{enumerate}
\end{proof}

\section{A reducibility result} \label{sec: red}

For any $\sigma\in\IrrG$ we write
\[
\cuspred_\sigma=\{\rho\in\CuspGL:\rho\sdp\sigma\text{ is reducible}\}.
\]
By \eqref{eq: indtildeinv} we have $\tilde{\cuspred_\sigma}=\cuspred_\sigma$.

In this section we prove the following result.
\begin{theorem} \label{thm: mainred}
Suppose that $\sigma\in\CuspG$ and $\pi\in\IrrGL$ are such that $\supp\pi\cap \cuspred_\sigma\ne\emptyset$.
Then $\pi\sdp\sigma$ is reducible.
\end{theorem}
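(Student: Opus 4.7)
The natural strategy is induction on $\deg\pi$, with Corollary~\ref{cor: redAAvee} used at each step to reduce $\pi$ via derivatives until it reaches a small residual case treated by hand. The base case $\pi=\rho\in\CuspGL$ is immediate, since $\rho\in\cuspred_\sigma$ means $\rho\rtimes\sigma$ is reducible by definition.

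For the inductive step, fix $\rho\in\supp\pi\cap\cuspred_\sigma$ and suppose toward contradiction that $\pi\rtimes\sigma$ is irreducible. Because $\sigma$ is supercuspidal we have $\comultG(\sigma)=\one\otimes\sigma$, so $D_A(\sigma)=\sigma$ for every $A\subset\CuspGL$. Consequently, by Corollary~\ref{cor: redAAvee},
\[
D_{A;\tilde A}(\pi)\rtimes\sigma\text{ is irreducible for every }A\subset\CuspGL.
\]
If there exists $\rho'\in\lnrset(\pi)\setminus\{\rho,\tilde\rho\}$, take $A=\{\rho'\}$. Then $\tilde A=\{\tilde{\rho'}\}$ is disjoint from $\{\rho,\tilde\rho\}$, so \eqref{eq: suppDAB} yields $\rho\in\supp D_{A;\tilde A}(\pi)$; meanwhile $\rho'\in A\cap\lnrset(\pi)$ forces $\deg D_{A;\tilde A}(\pi)<\deg\pi$. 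By the inductive hypothesis $D_{A;\tilde A}(\pi)\rtimes\sigma$ is reducible, contradicting the display above. The mirror argument on the right disposes of the case $\rnrset(\pi)\setminus\{\rho,\tilde\rho\}\ne\emptyset$. Thus we may assume $\lnrset(\pi),\rnrset(\pi)\subset\{\rho,\tilde\rho\}$.

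In this ``minimal'' case, write $\pi=Z(\m)$ with $\m=\Delta_1+\dots+\Delta_k$. Proposition~\ref{prop: combprop} then severely constrains $\m$: every $\rho'\notin\{\rho,\tilde\rho\}$ appearing as some $b(\Delta_i)$ must be paired with a segment beginning at $\rshft{\rho'}$, and analogously on the right. A short combinatorial argument then yields a segment $\Delta\le\m$ with $\rho\in\Delta$ and $b(\Delta),e(\Delta)\in\{\rho,\tilde\rho\}$. By Theorem~\ref{thm: oneseg}, $Z(\Delta)\rtimes\sigma$ is reducible (because $\rho\in\Delta\cap\cuspred_\sigma$). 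One then compares the embedding $\pi\hookrightarrow\std{\m}$ with the decomposition of $\std{\m}\rtimes\sigma$: using \eqref{eq: Tadic formula} together with the explicit form of $\comod$ on single segments \eqref{eq: comodseg}, the two irreducible constituents of $Z(\Delta)\rtimes\sigma$ give rise to two distinct irreducible contributions to $\comultG(\pi\rtimes\sigma)$, contradicting the assumed irreducibility.

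\textbf{Expected main obstacle.} The induction/derivative reduction is essentially formal; the delicate step is the minimal case. Converting reducibility of $Z(\Delta)\rtimes\sigma$ into reducibility of $\pi\rtimes\sigma$ is not automatic, because the remaining segments of $\m$ interact non-trivially with the classical-group factor through \eqref{eq: Tadic formula}, and the contribution of the non-trivial part of $\comults(\pi)$ to $\comod(\pi)$ can in principle hide the splitting. Careful bookkeeping via \eqref{eq: comodseg}--\eqref{eq: comodmaxladder}, Lemma~\ref{lem: simpleSIcrit} and the rigid structure extracted from $\lnrset(\pi),\rnrset(\pi)\subset\{\rho,\tilde\rho\}$ is what makes the final comparison of Jacquet-module multiplicities go through.
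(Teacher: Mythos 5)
Your overall strategy (induction on $\deg\pi$, reducing via Corollary~\ref{cor: redAAvee}, then treating a ``minimal'' case by hand) is indeed the paper's strategy, and the first reduction step you carry out is valid. But there are two serious gaps.

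\textbf{The reduction is too weak.} You only apply Corollary~\ref{cor: redAAvee} with $A$ \emph{disjoint} from $\{\rho,\tilde\rho\}$, which yields $\lnrset(\pi)\cup\rnrset(\pi)\subset\{\rho,\tilde\rho\}$. The paper additionally applies it with $A=\{\rho\}$ (so $\tilde A=\{\tilde\rho\}$) whenever $\rho$ survives in $\supp D_{\rho;\tilde\rho}(\pi)$; this is the second half of the $\cuspred_\sigma$-critical condition, and it is essential. Concretely, take $\tilde\rho=\rho$ and the ladder
\[
\m=[\rho,\rho[3]]+[\rho[-1],\rho[2]]+[\rho[-2],\rho[1]]+[\rho[-3],\rho].
\]
By \eqref{eq: ladlftred}, $\lnrset(\m)=\rnrset(\m)=\{\rho\}$, so your reduction stops here. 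Yet by Corollary~\ref{cor: rhoinsuppD} (using $[\rho[-1],\rho[2]]\le\m$), $\rho\in\supp D_{\rho;\rho}(Z(\m))$, so this $\pi$ is not $\cuspred_\sigma$-critical and the paper would apply the derivative $D_{\rho;\rho}$ and continue the induction. Without this extra reduction, your terminal set is much larger than a finite list.

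\textbf{The terminal step is both unproved and, as stated, false.} You assert that a ``short combinatorial argument'' produces a segment $\Delta\le\m$ with $b(\Delta),e(\Delta)\in\{\rho,\tilde\rho\}$. The ladder above is a counterexample: $\lnrset=\rnrset=\{\rho\}$, yet no constituent segment has both endpoints equal to $\rho$. What the paper actually does is quite different: it first proves Proposition~\ref{prop: critclass}, a complete classification of the $\cuspred_\sigma$-critical representations (a short explicit list up to multiplicities), relying on both halves of the critical condition together with Proposition~\ref{prop: combprop} and Corollary~\ref{cor: rhoinsuppD}. Then reducibility in the terminal case mostly follows from Theorem~\ref{thm: oneseg} (since most entries of the list contain a $Z(\Delta)$ or $L(\Delta)$ factor with $\rho\in\Delta$, and reducibility of a factor implies reducibility of the product), leaving only the single case $\pi=Z([\alpha,\rshft{\alpha}]+[\lshft{\alpha},\alpha])$ with $\tilde\alpha=\alpha$, which is handled by a direct Jacquet-module computation via \eqref{eq: comodladder} and \eqref{eq: Tadic formula}. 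Your own flag that ``converting reducibility of $Z(\Delta)\rtimes\sigma$ into reducibility of $\pi\rtimes\sigma$ is not automatic'' is accurate — and it is precisely what remains unaddressed. Without the full critical reduction, you cannot reach the explicit finite classification, and the terminal analysis you sketch does not close.
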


\subsection{}
We start with the following definition. Recall the notation of \S\ref{sec: derivatives}.
\begin{definition}
Let $\pi\in\IrrGL$ and $A\subset\CuspGL$.
\begin{enumerate}
\item We say that $\pi$ is \mnml{$A$} if $D_{A^c;A^c}(\pi)=\pi$, i.e., if $\lnrset(\pi)\cup\rnrset(\pi)\subset A$.
\item We say that $\pi$ is $A$-critical if it is \mnml{$A$}, different from $\one$, and for any $\rho\in A$, either $D_{\rho;\tilde\rho}(\pi)=\pi$
(i.e., $\pi$ is left $\rho$-reduced and right $\tilde\rho$-reduced) or $\supp D_{\rho;\tilde\rho}(\pi)\cap A=\emptyset$.
\end{enumerate}
\end{definition}

Note that $\pi$ is \mnml{$A$} (resp., $A$-critical) if and only if $\tilde\pi$ is \mnml{$\tilde A$} (resp., $\tilde A$-critical).

Next, we recall basic facts about the set $\cuspred_\sigma$ when $\sigma\in\CuspG$.
\begin{theorem}(Casselman, Silberger, M\oe glin) \label{thm: cuspred}
Let $\sigma\in\CuspG$ and $\rho\in\cuspred_\sigma$. Then
\begin{enumerate}
\item $\tilde\rho\in\rho[\R]:=\{\rho[x]:x\in\R\}$ (e.g., \cite[Th\'eor\`eme VII.1.3]{MR2567785}).
\item $\rho[\R]\cap\cuspred_\sigma=\{\rho,\tilde\rho\}$ \cite{MR577138}.
\item $\tilde\rho\in\rho[\Z]:=\{\rho[m]:m\in\Z\}$. Thus, if $\expo(\rho)\ge0$ then $[\tilde\rho,\rho]\in\Seg$
\cite[Th\'eor\`eme 3.1]{MR3220932}.\footnote{Technically, only quasi-split groups are considered in \cite{MR3220932} but as mentioned there,
this is only for simplicity.}
\end{enumerate}
\end{theorem}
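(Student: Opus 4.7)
The three parts are standard results assembled from the literature, and I would prove them in the order stated. Throughout, the analysis rests on the one-complex-parameter family $\rho[s]\sdp\sigma$, $s\in\C$, whose reducibility structure, since $\sigma$ is supercuspidal, is governed entirely by its Plancherel measure $\mu(s)$.

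For part (1), I would invoke Harish-Chandra's theory of normalized intertwining operators. The standard long intertwining operator gives an isomorphism $\rho[s]\sdp\sigma\simeq\tilde\rho[-s]\sdp\sigma$ at generic $s$, so any reducibility at $s=0$ forces $\mu(s)$ to vanish or have a pole there. With $\sigma$ supercuspidal, such a singularity can only occur when the Weyl conjugate $\tilde\rho$ of $\rho$ inside the Levi is already a real twist of $\rho$, giving $\tilde\rho\in\rho[\R]$. For part (2), I would then parametrize the resulting real line as $x\mapsto\rho_0[x]$ with $\rho_0$ of unitary central character, and apply Silberger's analysis of $x\mapsto\mu(\rho_0[x]\sdp\sigma)$: this meromorphic function admits at most one pair $\pm y$ of real zeros, producing at most two reducibility points, which by the involutive symmetry on $\cuspred_\sigma$ must be $\{\rho,\tilde\rho\}$.

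Part (3) is M\oe glin's rationality theorem \cite{MR3220932}, which I would cite as a black box rather than attempt to reprove. Her argument combines the structure of supercuspidal supports with substantial further input --- the local Langlands correspondence for classical groups, or an equivalent endoscopic or theta-correspondence substitute --- to force the reducibility exponent into $\tfrac12\Z$, and together with (1) into $\Z$. The main obstacle is clearly (3): parts (1) and (2) are classical harmonic analysis of fairly well-understood flavour, whereas M\oe glin's integrality statement requires functorial input that goes decisively beyond the intertwining-operator calculus and is not visible at the level of Jacquet modules alone.
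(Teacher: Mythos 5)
The paper does not actually prove Theorem~\ref{thm: cuspred}: the three parts are stated as imported results with citations, followed by a one-sentence gloss (``The first two parts are special cases of general results about reductive groups. The proof of the third part lies deeper --- it uses the stabilization of the twisted trace formula (at least in a simple form).''). Your sketch is therefore not being compared against an in-paper argument but against the cited literature, and it matches it: part (1) is the Harish-Chandra/Casselman intertwining-operator criterion as in Renard \cite[Th\'eor\`eme VII.1.3]{MR2567785}, part (2) is Silberger's analysis of the Plancherel measure on a maximal-parabolic cuspidal line \cite{MR577138}, and part (3) is M\oe glin \cite{MR3220932}. Your framing of (3) in terms of endoscopy/local Langlands is compatible with, but slightly looser than, the paper's own remark, which specifically names the stabilization of the twisted trace formula as the deep input --- it would be worth matching that formulation. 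One small imprecision in (2): the statement asserts $\rho[\R]\cap\cuspred_\sigma=\{\rho,\tilde\rho\}$ exactly, not just ``at most two'' points; Silberger's zero/pole analysis of $\mu$ combined with the symmetry $\tilde{\cuspred_\sigma}=\cuspred_\sigma$ pins the reducibility set down exactly to the pair $\{\rho,\tilde\rho\}$ (which may coincide), so your ``at most one pair $\pm y$'' should be upgraded to ``exactly this pair.'' Otherwise the proposal is a fair and correctly-weighted account of what the references accomplish.
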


The first two parts are special cases of general results about reductive groups.
The proof of the third part lies deeper -- it uses the stabilization of the twisted trace formula (at least in a simple form).
We will only use it in a superficial way to simplify the statements.

\begin{proposition} \label{prop: critclass}
Let $\sigma\in\CuspG$. Then the $\cuspred_\sigma$-critical representations are of the form
\begin{gather*}
Z([\tilde\alpha,\alpha])^{\times k}, L([\tilde\alpha,\alpha])^{\times k},\ k\ge1,\\
\alpha^{\times k}\times\tilde\alpha^{\times l},\ k,l\ge0, k+l>0\text{ (with $kl=0$ if $\tilde\alpha=\lshft{\alpha})$},\\
\text{(if $\tilde\alpha=\alpha$)  }\alpha^{\times k}\times Z([\alpha,\rshft{\alpha}]+[\lshft{\alpha},\alpha])^{\times l},\ k\ge0, l>0,
\end{gather*}
as we vary over $\alpha\in\cuspred_\sigma$ with $\expo(\alpha)\ge0$.
\end{proposition}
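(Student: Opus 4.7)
The plan is to first reduce to representations supported in a single $\Z$-orbit containing a pair $\{\alpha,\tilde\alpha\}\subset\cuspred_\sigma$, and then classify via a case analysis on condition (c) at $\rho=\alpha$ and $\rho=\tilde\alpha$, using the combinatorial description of the derivatives in Proposition \ref{prop: combprop}. For the reduction, suppose $\supp\pi$ meets orbits $\alpha_1[\Z],\ldots,\alpha_r[\Z]$ with $\alpha_i$ ranging over representatives of the $\tilde{}$-orbits in $\cuspred_\sigma$ that appear in $\supp\pi$. Applying Remark \ref{rem: tensorprodGL} to $A_i=\alpha_i[\Z]\cup\tilde\alpha_i[\Z]$ factors $\pi=\pi_1\times\cdots\times\pi_r$ with $\supp\pi_j\subset A_j$. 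The factors $\pi_j$, $j\ne 1$, are automatically left $\alpha_1$-reduced and right $\tilde\alpha_1$-reduced, so $D_{\alpha_1;\tilde\alpha_1}(\pi)=D_{\alpha_1;\tilde\alpha_1}(\pi_1)\times\prod_{j\ne 1}\pi_j$. Combining condition (c) with the observation that any nontrivial $\cuspred_\sigma$-confined $\pi_j$ must satisfy $\supp\pi_j\cap\{\alpha_j,\tilde\alpha_j\}\ne\emptyset$ (since the unmatched endpoints must lie in $\cuspred_\sigma$), we force $\pi_j=\one$ for $j\ne 1$, so $\supp\pi\subset\alpha[\Z]$ for a unique $\alpha$, which we normalize so $\expo(\alpha)\ge 0$.

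Write $\pi=Z(\m)$. For each $\rho\in\{\alpha,\tilde\alpha\}$, condition (c) gives one of two options: (A) $\pi$ is left $\rho$-reduced and right $\tilde\rho$-reduced, or (B) $\supp D_{\rho;\tilde\rho}(\pi)\cap\{\alpha,\tilde\alpha\}=\emptyset$. Together with (a) this yields four patterns at $(\alpha,\tilde\alpha)$. Pattern (A,A) forces $\lnrset(\pi)=\rnrset(\pi)=\emptyset$, hence $\pi=\one$, not critical. In pattern (A,B), the matching condition of Proposition \ref{prop: combprop} confines every left endpoint of a segment in $\m$ to $\tilde\alpha$ and every right endpoint to $\alpha$ (any other unmatched endpoint would violate (a), and chains of matched endpoints propagate out of $\{\alpha,\tilde\alpha\}$ unless trivial); the further constraint $\supp D_{\tilde\alpha;\alpha}(\pi)\cap\{\alpha,\tilde\alpha\}=\emptyset$ excludes hybrids such as a singleton $\{\alpha\}$ alongside $[\tilde\alpha,\alpha]$, forcing $\m=k\cdot[\tilde\alpha,\alpha]$ and yielding form (i). The mirror pattern (B,A) yields form (ii) by the parallel argument applied to right-derivatives (and recovering $L([\tilde\alpha,\alpha])^{\times k}$ as the representation whose multisegment consists of $k$ copies of each singleton in $[\tilde\alpha,\alpha]$). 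Pattern (B,B) permits more flexibility but the same matching-plus-support analysis restricts each segment of $\m$ either to a singleton $\{\alpha\}$ or $\{\tilde\alpha\}$, or, only in the self-dual case $\alpha=\tilde\alpha$, to be part of the balanced pair $[\alpha,\rshft\alpha]+[\lshft\alpha,\alpha]$ --- the unique two-segment configuration whose unmatched endpoints reduce to $\{\alpha\}$ --- yielding forms (iii) and (iv). The exclusion $kl=0$ when $\tilde\alpha=\lshft\alpha$ in (iii) is precisely the irreducibility constraint on $\alpha^{\times k}\times\tilde\alpha^{\times l}$ in the linked case.

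Finally one verifies (a) and (c) directly for each listed family using the ladder Jacquet-module formula \eqref{eq: comladder}. For (i), $\lnrset=\{\tilde\alpha\}$, $\rnrset=\{\alpha\}$, and $D_{\tilde\alpha;\alpha}(Z([\tilde\alpha,\alpha])^{\times k})=Z([\rshft{\tilde\alpha},\lshft\alpha])^{\times k}$, with support avoiding $\{\alpha,\tilde\alpha\}$; form (ii) is handled by the parallel dual computation. For (iii), both $D_{\alpha;\tilde\alpha}$ and $D_{\tilde\alpha;\alpha}$ collapse to $\one$. For (iv), the balanced-pair multisegment is itself a ladder, and \eqref{eq: comladder} gives $D_{\alpha;\alpha}(\pi)=Z(\rshft\alpha+\lshft\alpha)^{\times l}$, supported outside $\{\alpha\}$. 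The main obstacle is pattern (B,B), especially in the self-dual case: ruling out any further multisegment configuration beyond singletons and the balanced pair requires a careful Hall-type argument on the matching injections $X_\rho\to X_{\rshft\rho}$ and $Y_\rho\to Y_{\lshft\rho}$ of Proposition \ref{prop: combprop}, combined with iterated use of \eqref{eq: suppDAB} and condition (c) to exclude hybrid configurations.
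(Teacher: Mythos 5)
Your approach (a four-way case split according to whether condition~(c) at each of $\rho=\alpha,\tilde\alpha$ falls into alternative~(A) ``$D_{\rho;\tilde\rho}(\pi)=\pi$'' or alternative~(B) ``$\supp D_{\rho;\tilde\rho}(\pi)\cap\cuspred_\sigma=\emptyset$'') is genuinely different from the paper's. The paper instead fixes the segment $\Delta\le\m$ of maximal length $r$ with $b(\Delta)$ maximal, pins down $b(\Delta)\in\{\alpha,\tilde\alpha\}$ together with $\alpha,\tilde\alpha\notin\supp D_{b(\Delta);\widetilde{b(\Delta)}}(\pi)$, and then splits on $\tilde\alpha=\alpha$ vs.\ $\tilde\alpha\ne\alpha$ and on small values of $r$, ruling out $r>2$ (resp.\ $r>|[\tilde\alpha,\alpha]|$) via Corollary~\ref{cor: rhoinsuppD}. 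Your pattern dichotomy is an attractive reorganization, and your reduction to $\supp\pi\subset\alpha[\Z]$ is correct in spirit (though one should state it the way the paper does: if $\supp\pi\not\subset\alpha[\Z]$, pick $\rho\in\lnrset(\pi_2)$ for a nontrivial factor $\pi_2$ in a different cuspidal line; confinement forces $\rho\in\cuspred_\sigma$, and then $D_{\rho;\tilde\rho}(\pi)=\pi_1\times D_{\rho;\tilde\rho}(\pi_2)\ne\pi$ retains $\alpha$ in its support).

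The difficulty is that you have not actually carried out the argument for the cases that matter. Pattern~(B,B) contains three of the four target families ($\alpha^{\times k}$, $\alpha^{\times k}\times\tilde\alpha^{\times l}$ with $k,l>0$, and the self-dual balanced-pair family) and you explicitly defer it: ``ruling out any further multisegment configuration \dots requires a careful Hall-type argument \dots combined with iterated use of \eqref{eq: suppDAB} and condition (c).'' That is a statement of intent, not a proof; the exclusion of hybrid configurations (e.g.\ $[\alpha,\rshft\alpha]+[\lshft\alpha,\alpha]+\{\alpha\}$ in the self-dual case, or $[\tilde\alpha,\alpha]+\{\alpha\}$ in the non-self-dual case) is precisely where the content of the proposition lies, and it is what the paper handles by combining the multiplicity inequalities \eqref{step: precsucc} with Corollary~\ref{cor: rhoinsuppD} applied to well-chosen segments. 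Similarly, in pattern~(A,B) the assertion that ``the matching condition confines every left endpoint to $\tilde\alpha$ and every right endpoint to $\alpha$'' is not what Proposition~\ref{prop: combprop} says: the Hall condition allows other endpoints when they are matched. What is true, and easy, is that $\lnrset(\pi)=\{\tilde\alpha\}$ and $\rnrset(\pi)=\{\alpha\}$ force $b(\Delta)\le\tilde\alpha\le\alpha\le e(\Delta)$ for all $\Delta\le\m$ (via the maximal-$b(\Delta)$/minimal-$e(\Delta)$ observation), and then any strict containment $\Delta\supsetneq[\tilde\alpha,\alpha]$ puts $\tilde\alpha$ or $\alpha$ in the interior of $\Delta$ and contradicts alternative~(B) via Corollary~\ref{cor: rhoinsuppD}. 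You should make that argument explicit rather than invoking ``chains of matched endpoints propagate.'' As written, the proposal sketches a plausible alternative route but leaves the central case unproved, so it is not a complete proof.
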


\begin{proof}
It is clear that the above representations are well-defined and $\cuspred_\sigma$-critical.
Conversely, suppose that $\pi=Z(\m)\in\IrrGL$ is $\cuspred_\sigma$-critical.
We first remark that since $\supp\pi\ne\emptyset$ and $\pi$ is \mnml{$\cuspred_\sigma$}, necessarily $\supp\pi\cap\cuspred_\sigma\ne\emptyset$.
By passing to $\tilde\pi$ if necessary we may assume that there exists $\alpha\in\supp\pi\cap\cuspred_\sigma$ with $\expo(\alpha)\ge0$.
We fix such $\alpha$ for the rest of the proof.

We first claim that $\supp\pi\subset\alpha[\Z]$.

For otherwise we could write $\pi=\pi_1\times\pi_2$ where $\pi_1,\pi_2\ne\one$, $\supp\pi_1\subset\alpha[\Z]$,
and $\supp\pi_2\cap\alpha[\Z]=\emptyset$.
If $\rho\in\lnrset(\pi_2)$ then $D_{\rho;\tilde\rho}(\pi)=\pi_1\times D_{\rho;\tilde\rho}(\pi_2)$.
Thus, $D_{\rho;\tilde\rho}(\pi)\ne\pi$ but $\alpha\in\supp D_{\rho;\tilde\rho}(\pi)$,
in contradiction to the assumption on $\pi$.

Next we claim that
\begin{equation} \label{step: bdelta<=}
b(\Delta)\le\alpha\text{ for any }\Delta\le\m.
\end{equation}

Indeed, let $\Delta\le\m$ with $b(\Delta)$ maximal. Then by Proposition \ref{prop: combprop} $b(\Delta)\in\lnrset(\pi)$ and hence,
$b(\Delta)\in\cuspred_\sigma\cap\alpha[\Z]=\{\tilde\alpha,\alpha\}$ by Theorem \ref{thm: cuspred}.

Similarly,
\begin{equation} \label{step: edelta>=}
e(\Delta)\ge\tilde\alpha\text{ for any }\Delta\le\m.
\end{equation}

In particular,
\begin{equation}\label{step: onlysingle}
\rho\in[\tilde\alpha,\alpha]\text{ for any }\{\rho\}\le\m.
\end{equation}

Henceforth, let $r$ be the maximal length of a segment in $\m$ and
for any segment $\Delta$ denote by $a_{\m}(\Delta)$ its multiplicity in $\m$.

Proposition \ref{prop: combprop} and the assumption that $\pi$ is \mnml{$\cuspred_\sigma$} imply that
for any segment $\Delta$ of length $r$ we have
\begin{subequations} \label{step: precsucc}
\begin{gather}
a_{\m}(\Delta)\le a_{\m}(\rshft\Delta)\text{ if }b(\Delta)\notin \cuspred_\sigma,\\
a_{\m}(\Delta)\le a_{\m}(\lshft\Delta)\text{ if }e(\Delta)\notin \cuspred_\sigma.
\end{gather}
\end{subequations}

\emph{From now on, let $\Delta$ be the segment in $\m$ of length $r$ with $b(\Delta)$ maximal.}
For $\rho=b(\Delta)$ we have
\begin{equation} \label{step: bdelta}
\rho\in\{\alpha,\tilde\alpha\}\text{ and }\alpha,\tilde\alpha\notin\supp D_{\rho;\tilde\rho}(\pi).
\end{equation}
Indeed, Proposition \ref{prop: combprop} $\rho\in\lnrset(\pi)\subset\cuspred_\sigma\cap\supp\pi=\{\alpha,\tilde\alpha\}$.
The second statement follows from the condition on $\pi$ since $D_{\rho;\tilde\rho}(\pi)\ne\pi$.

Consider now the case $\tilde\alpha=\alpha$.

\begin{enumerate}
\item If $r=1$, i.e., if all elements of $\m$ are singletons, then by \eqref{step: onlysingle} we have $\pi=\alpha^{\times k}$ for some $k>0$.

\item Suppose that $r=2$. Then by \eqref{step: bdelta}, $\Delta=[\alpha,\rshft\alpha]$
and by \eqref{step: precsucc} $a_{\m}(\lshft\Delta)=a_{\m}(\Delta)$.
Using \eqref{step: bdelta<=}, \eqref{step: edelta>=} and \eqref{step: onlysingle} we necessarily have
\[
\pi=\alpha^{\times k}\times Z([\alpha,\rshft{\alpha}]+[\lshft{\alpha},\alpha])^{\times l}
\]
for some $k\ge0$ and $l>0$.

\item Assume on the contrary that $r>2$. Then by \eqref{step: precsucc}, $\lshft\Delta\le\m$.
On the other hand, $\alpha=b(\Delta)\in\lshft{\Delta}$ but $\alpha\ne b(\lshft{\Delta}),e(\lshft{\Delta})$ since $r>2$.
By Corollary \ref{cor: rhoinsuppD} (applied to $\lshft{\Delta}$) we have $\alpha\in\supp D_{\alpha;\alpha}(\pi)$ in contradiction with \eqref{step: bdelta}.
\end{enumerate}

This concludes the case $\tilde\alpha=\alpha$.
\emph{From now on we assume that $\tilde\alpha\ne\alpha$.}

\begin{enumerate}
\item Suppose that $r=1$, i.e., all segments in $\m$ are singletons. Then either
\[
\pi=L([\tilde\alpha,\alpha])^{\times k}
\]
for some $k>0$ or
\[
\pi=\alpha^{\times k}\times\tilde\alpha^{\times l}
\]
for some $k,l\ge0$ with $k+l>0$ where $kl=0$ if $\tilde\alpha=\lshft\alpha$.

Indeed, by \eqref{step: onlysingle} and \eqref{step: precsucc} we have
\[
\m=k(\sum_{\rho\in[\tilde\alpha,\alpha]}\{\rho\})+l\{\alpha\}+m\{\tilde\alpha\}
\]
for some $k,l,m\ge0$. If $\tilde\alpha=\lshft{\alpha}$ we may assume that $l$ or $m$ equals $0$. Thus,
\[
\pi=L([\tilde\alpha,\alpha])^{\times k}\times\alpha^{\times l}\times\tilde\alpha^{\times m}.
\]
We have
\[
D_{\tilde\alpha;\alpha}(\pi)=L([\tilde\alpha,\alpha])^{\times k}
\]
and therefore either $k=0$ or $l=m=0$ for otherwise $\pi$ is not $\cuspred_\sigma$-critical.

\item Suppose that $r>1$. Then $\Delta=[\tilde\alpha,\alpha]$.

Assume on the contrary that $\Delta\ne[\tilde\alpha,\alpha]$. Then $e(\Delta)\not\in\cuspred_\sigma$
(since $b(\Delta)\in\{\alpha,\tilde\alpha\}$ and $\Delta\ne\{\alpha\},\{\tilde\alpha\}$)
and hence $\lshft{\Delta}\le\m$ by \eqref{step: precsucc}. Let $\rho=b(\Delta)$. Since $\rho\in\lshft\Delta$,
$\rho\ne b(\lshft{\Delta})$ and $\tilde\rho\ne\rho$ we have $\rho\in\supp D_{\rho;\tilde\rho}(\pi)$ by Corollary \ref{cor: rhoinsuppD}
in contradiction with \eqref{step: bdelta}.

\emph{For the rest of the proof we assume that $\Delta=[\tilde\alpha,\alpha]$ (and $\tilde\alpha\ne\alpha$).}

\item
We have
\begin{equation} \label{part: leftalpha}
D_{\alpha;\tilde\alpha}(\pi)=\pi,\text{i.e., $\alpha\notin\lnrset(\pi)$ and $\tilde\alpha\notin\rnrset(\pi)$,}
\end{equation}
and $\supp D_{\tilde\alpha;\alpha}(\pi)\cap \cuspred_\sigma=\emptyset$.

Indeed, by Corollary \ref{cor: rhoinsuppD} $\alpha\in\supp D_{\alpha;\tilde\alpha}(\pi)$.
Thus, $D_{\alpha;\tilde\alpha}(\pi)=\pi$ by the assumption on $\pi$.
On the other hand, $D_{\tilde\alpha;\alpha}(\pi)\ne\pi$ since $\tilde\alpha\in\lnrset(\pi)$ and therefore
$\supp D_{\tilde\alpha;\alpha}(\pi)\cap \cuspred_\sigma=\emptyset$ by the assumption on $\pi$.

\item Assume that there exists $\Delta'\ne\Delta$ with $\Delta'\le\m$ and take
such $\Delta'$ with $b(\Delta')$ maximal. Then either $b(\Delta')=\tilde\alpha$ or $b(\Delta')=\lshft{\tilde\alpha}$.

Otherwise, we would have $b(\Delta')\in\lnrset(\pi)$ (by Proposition \ref{prop: combprop} and the maximality of $b(\Delta')$)
and this contradicts either the assumption that $\pi$ is \mnml{$\cuspred_\sigma$} if $b(\Delta')\ne\alpha$ or \eqref{part: leftalpha} if $b(\Delta')=\alpha$.

\item The condition $b(\Delta')=\tilde\alpha$ implies $\Delta'=\{\tilde\alpha\}$.

Otherwise, $e(\Delta')\notin \cuspred_\sigma$ and therefore $e(\Delta')\notin\rnrset(\pi)$.
Thus, by Proposition \ref{prop: combprop} there exists $\Delta''\in\m$ such that $\Delta''\prec\Delta'$ and $e(\Delta'')=e(\lshft{\Delta'})$.
Hence, $b(\Delta'')\ne\tilde\alpha\in\Delta''$ and therefore by Corollary \ref{cor: rhoinsuppD}
$\tilde\alpha\in\supp D_{\tilde\alpha;\alpha}(\pi)$ which contradicts \eqref{part: leftalpha}.

\item $\Delta'\ne\{\tilde\alpha\}$.

Otherwise, by \eqref{step: edelta>=} and Proposition \ref{prop: combprop} $\tilde\alpha\in\rnrset(\pi)$
and we again get a contradiction to \eqref{part: leftalpha}.

\item $b(\Delta')\ne\lshft{\tilde\alpha}$.

Otherwise, $\tilde\alpha\in\Delta'$ (by \eqref{step: edelta>=}) and therefore by Corollary \ref{cor: rhoinsuppD}
$\tilde\alpha\in\supp D_{\tilde\alpha;\alpha}(\pi)$ and we get a contradiction to \eqref{part: leftalpha}.

\item Thus, $\pi=Z(\Delta)^{\times k}$ for some $k>0$.

\end{enumerate}
The proposition follows.
\end{proof}

\subsection{Proof of Theorem \ref{thm: mainred}}
Assume on the contrary that $\pi\sdp\sigma$ is irreducible for some $\pi\in\IrrGL$ such that $\supp\pi\cap \cuspred_\sigma\ne\emptyset$
and let $\pi$ be a counterexample with minimal $\deg\pi$. Clearly $\pi\ne\one$.
We claim that $\pi$ is $\cuspred_\sigma$-critical.
Indeed, $\pi$ is \mnml{$\cuspred_\sigma$} since otherwise $D_{\cuspred_\sigma^c;\cuspred_\sigma^c}(\pi)$ would be a counterexample
of smaller degree in view of \eqref{eq: suppDAB} and Corollary \ref{cor: redAAvee}. Further, for any $\rho\in\cuspred_\sigma$ we have either
$D_{\rho;\tilde\rho}(\pi)=\pi$ or $\supp D_{\rho;\tilde\rho}(\pi)\cap\cuspred_\sigma=\emptyset$ for otherwise $D_{\rho;\tilde\rho}(\pi)$
would be a counterexample of smaller degree.

By Proposition \ref{prop: critclass} and Theorem \ref{thm: oneseg} it remains to show the following lemma.
(It is clear that if $\pi_1\sdp\sigma$ is reducible then $\pi_1\times\pi_2\sdp\sigma$ is reducible for any $\pi_2$.)

\begin{lemma}
Suppose that $\tilde\alpha=\alpha\in\cuspred_\sigma$ and
\[
\pi=Z([\alpha,\rshft{\alpha}]+[\lshft{\alpha},\alpha]).
\]
Then $\pi\sdp\sigma$ is reducible.
\end{lemma}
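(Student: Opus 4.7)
Set $A = [\alpha, \rshft\alpha]$ and $B = [\lshft\alpha, \alpha]$, so that $\pi = Z(A + B)$. The hypothesis $\tilde\alpha = \alpha$ forces $\tilde{\rshft\alpha} = \lshft\alpha$, hence $\tilde A = B$, $\tilde B = A$, and in particular $\tilde\pi = \pi$. The multisegment $\m = A + B$ is a ladder with $B \prec A$. The plan is to establish reducibility by showing that $\End_G(\pi \sdp \sigma)$ is at least two-dimensional.

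By Tadi\'c's formula \eqref{eq: Tadic formula} together with $\comultG(\sigma) = \one \otimes \sigma$ (which holds because $\sigma$ is supercuspidal), $\comultG(\pi \sdp \sigma) = \comod(\pi) \sdp (\one \otimes \sigma)$. I will show that the Jordan--H\"older multiplicity of $\pi \otimes \sigma$ in this Jacquet module is at least $2$. Indeed, for any $\beta \in \IrrGL$ with $\beta \ne \one$, the multiplicity $[\sigma : \beta \sdp \sigma]$ vanishes by supercuspidality of $\sigma$, so only terms $\alpha_i \times \tilde{\gamma_i} \otimes \beta_i$ of $\comod(\pi)$ with $\beta_i = \one$ contribute. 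Tracing through the definition of $\comod$, these terms correspond bijectively, via the identity summand $\one \otimes B_k \le \JacGL(B_k)$, to the terms of $\JacGL(\pi) = \sum_k A_k \otimes B_k$, each contributing $A_k \times \tilde B_k$. Thus the multiplicity in question equals $[\pi : \comodmax(\pi)]$. Using the ladder formula \eqref{eq: comladder}, the two boundary terms $\one \otimes \pi$ and $\pi \otimes \one$ appear in $\JacGL(\pi)$ each with coefficient $1$, and yield the contributions $\one \times \tilde\pi = \pi$ and $\pi \times \one = \pi$ respectively, using $\tilde\pi = \pi$. Hence $[\pi : \comodmax(\pi)] \ge 2$.

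The final step is to upgrade this Jordan--H\"older inequality to a lower bound on $\dim \End_G(\pi \sdp \sigma)$ via the geometric lemma filtration of $\JacG(\pi \sdp \sigma)$: the two identified copies of $\pi \otimes \sigma$ arise from distinct double cosets, namely the identity Weyl element (producing $\id_{\pi \sdp \sigma}$ via first adjunction) and the longest Weyl element (which, using $\tilde\pi = \pi$, produces the long intertwining operator $M_{w_0} : \pi \sdp \sigma \to \tilde\pi \sdp \sigma = \pi \sdp \sigma$). The main obstacle is to verify that these give linearly independent elements of $\End_G(\pi \sdp \sigma)$, equivalently that $M_{w_0}$ is not a scalar multiple of the identity. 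This is precisely where the assumption $\alpha \in \cuspred_\sigma$ enters decisively: the Plancherel measure for the root associated to $\alpha$ has a pole at the reducibility point, which prevents $M_{w_0}$ from collapsing to a scalar. Once this is secured, $\End_G(\pi \sdp \sigma) \ge 2$ and $\pi \sdp \sigma$ is reducible.
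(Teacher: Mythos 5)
Your first two steps are sound: the hypothesis $\tilde\alpha=\alpha$ does give $\tilde\pi=\pi$, the reduction to $\comodmax(\pi)$ via supercuspidality of $\sigma$ is correct (modulo a small slip -- the relevant identity summand comes from $A_k\otimes\one\le\JacGL(A_k)$, not from $B_k$), and the two boundary terms $\pi\otimes\one$ and $\one\otimes\pi$ of $\JacGL(\pi)$ do each contribute a copy of $\pi$ to $\comodmax(\pi)$, so the multiplicity of $\pi\otimes\sigma$ in $\comultG(\pi\sdp\sigma)$ is indeed $\ge 2$.

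The gap is in the final step, and it is not a technicality -- it is the heart of the problem. A Jordan--H\"older multiplicity of $2$ in the Jacquet module does \emph{not} imply reducibility of the induced representation: for example, if $\chi$ is a unitary character of $F^\times$ then $\chi\times\chi$ is irreducible in $\GL_2$, yet its Jacquet module is $2(\chi\otimes\chi)$. To conclude $\dim\End_G(\pi\sdp\sigma)\ge 2$ one would need the multiplicity of $\pi\otimes\sigma$ in the \emph{cosocle} (or socle) of the Jacquet module, and the geometric-lemma filtration places the two copies at opposite ends (the closed orbit gives a quotient, the open one a subobject), so they do not automatically both map to, or both receive maps from, $\pi\otimes\sigma$. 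Your proposed fix -- that $\alpha\in\cuspred_\sigma$ forces a Plancherel-measure pole which prevents $M_{w_0}$ from being scalar -- is an appeal to a fact that has not been established and whose applicability is doubtful here: the Plancherel measure in the sense of Shahidi is attached to the corank-one datum $\alpha\sdp\sigma$, whereas $\pi$ is not supercuspidal, and the passage from the behaviour of $\mu(s,\alpha,\sigma)$ to non-scalarity of the intertwining operator on $\pi\sdp\sigma$ is essentially equivalent to the reducibility being claimed. As written, the argument is circular.

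For comparison, the paper's proof avoids intertwining operators entirely. Writing $\pi=L([\alpha,\rshft{\alpha}]+[\lshft{\alpha},\alpha])$, it produces an explicit irreducible constituent $L([\alpha,\rshft{\alpha}]+[\alpha,\rshft{\alpha}];\sigma)$ of $\pi\sdp\sigma$ (via \cite[Proposition 1.3]{MR2504024}), notes that this constituent also occurs in $L([\alpha,\rshft{\alpha}])\sdp L([\alpha,\rshft{\alpha}];\sigma)$, and then shows $\pi\sdp\sigma\not\le L([\alpha,\rshft{\alpha}])\sdp L([\alpha,\rshft{\alpha}];\sigma)$ in the Grothendieck group by comparing the Jacquet modules cut to the cuspidal datum $\alpha+\lshft{\alpha}$; the reducibility of $L([\lshft{\alpha},\alpha])\sdp\sigma$ (which is exactly where $\alpha\in\cuspred_\sigma$ is used) produces an honest length discrepancy. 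You could try to adapt your multiplicity computation into a similar Grothendieck-group comparison, but some second representation to compare against must be identified, which your proposal does not do.
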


\begin{proof}
Note that $\pi=L([\alpha,\rshft{\alpha}]+[\lshft{\alpha},\alpha])$.
By \cite[Proposition 1.3]{MR2504024} we have
\[
L([\alpha,\rshft{\alpha}]+[\alpha,\rshft{\alpha}];\sigma)\le\pi\sdp\sigma.
\]
On the other hand, clearly
\[
L([\alpha,\rshft{\alpha}]+[\alpha,\rshft{\alpha}];\sigma)\le L([\alpha,\rshft{\alpha}])\sdp L([\alpha,\rshft{\alpha}];\sigma).
\]
Thus, to prove the reducibility of $\pi\sdp\sigma$ it suffices to show that
\[
\pi\sdp\sigma\not\le L([\alpha,\rshft{\alpha}])\sdp L([\alpha,\rshft{\alpha}];\sigma).
\]
We show in fact that
\begin{equation} \label{eq: reqeq}
\comultG_{\{\alpha+\lshft{\alpha}\};*}(\pi\sdp\sigma)\not\le
\comultG_{\{\alpha+\lshft{\alpha}\};*}(L([\alpha,\rshft{\alpha}])\sdp L([\alpha,\rshft{\alpha}];\sigma)).
\end{equation}
By \eqref{eq: comodladder} and \eqref{eq: Tadic formula}, the left-hand side of \eqref{eq: reqeq} is
\[
2L([\lshft{\alpha},\alpha])\otimes L([\lshft{\alpha},\alpha])\sdp\sigma.
\]
On the other hand, we have
\[
\comod_{\MS{\{\alpha,\lshft{\alpha}\}};*}(L([\alpha,\rshft{\alpha}]))=L([\lshft{\alpha},\alpha])\otimes\one+
\alpha\otimes\rshft{\alpha}+\one\otimes L([\alpha,\rshft{\alpha}])
\]
and in particular
\[
\comod_{\{\lshft{\alpha}\};*}(L([\alpha,\rshft{\alpha}]))=0.
\]
Hence,
\[
\comultG_{\{\lshft{\alpha}\};*}(L([\alpha,\rshft{\alpha}];\sigma))\le
\comultG_{\{\lshft{\alpha}\};*}(L([\alpha,\rshft{\alpha}])\sdp\sigma)=0.
\]
Thus, the right-hand side of \eqref{eq: reqeq} is
\[
L([\lshft{\alpha},\alpha])\otimes\left(L([\lshft{\alpha},\alpha])\sdp\sigma+L([\alpha,\rshft{\alpha}];\sigma)\right)
\]
and \eqref{eq: reqeq} follows since $L([\lshft{\alpha},\alpha])\sdp\sigma$ is reducible.
\end{proof}
This concludes the proof of Theorem \ref{thm: mainred}.

\section{Case of two segments} \label{sec: twosegments}

Throughout this section we fix $\sigma\in\CuspG$. Thus,
\[
\comultG(\sigma)=\one\otimes\sigma.
\]
We will prove the following result.
\begin{theorem} \label{thm: twosegments}
Let $\Delta_1, \Delta_2$ be two segments such that $\Delta_i\cap\cuspred_\sigma=\emptyset$, $i=1,2$.
Then $Z(\Delta_1+\Delta_2)\sdp\sigma$ is reducible if and only if $\Delta_1$ and $\widetilde{\Delta_2}$ are linked
and at least one of the following conditions is satisfied:
\begin{enumerate}
\item $\Delta_1$ and $\Delta_2$ are unlinked.
\item $\expo(\Delta_1)\cdot\expo(\Delta_2)<0$.
\end{enumerate}
\end{theorem}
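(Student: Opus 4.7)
The plan is to split the biconditional into irreducibility and reducibility directions and handle each via the machinery of \S\ref{sec: classical}: Corollary~\ref{cor: stdirredcrit} supplies irreducibility from standard-module embeddings, while Grothendieck-group decompositions and Jacquet-module comparisons supply reducibility. The key preliminary is a \emph{swap isomorphism}: the hypothesis $\Delta_i \cap \cuspred_\sigma = \emptyset$ makes $Z(\Delta_i) \sdp \sigma$ irreducible by Theorem~\ref{thm: oneseg}, so \eqref{eq: indtildeinv} upgrades the equality of Jordan--H\"older content to an isomorphism $Z(\Delta_i) \sdp \sigma \simeq Z(\widetilde{\Delta_i}) \sdp \sigma$ of $G$-representations. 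By associativity of parabolic induction this propagates to $Z(\Delta_1) \times Z(\Delta_2) \sdp \sigma \simeq Z(\Delta_1) \times Z(\widetilde{\Delta_2}) \sdp \sigma$. Invoking \eqref{eq: indtildeinv} once more at the $\GL$-level, I assume without loss of generality that $\expo(\Delta_1) \ge 0$.

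For the irreducibility direction, set $\m = \pstv{\Delta_1} + \pstv{\Delta_2} \in \MS{\Seg_{\ge0}}$. From the natural embedding $\pi \hookrightarrow Z(\Delta_i) \times Z(\Delta_j)$ (ordered so the first does not precede the second), applying $\sdp\sigma$ and using the swap isomorphism to replace $Z(\Delta_j)$ by $Z(\pstv{\Delta_j})$ produces an embedding $\pi \sdp \sigma \hookrightarrow \std{\m;\sigma}$; the hypothesis ($\Delta_1, \widetilde{\Delta_2}$ unlinked, or $\Delta_1, \Delta_2$ linked with $\expo(\Delta_1)\expo(\Delta_2) \ge 0$) is precisely what ensures the resulting ordering of $\pstv{\Delta_1}, \pstv{\Delta_2}$ is compatible with the one defining the standard module. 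A parallel argument applied to $(\pi \sdp \sigma)^\vee = \pi^\Galinv \sdp \sigma^\vee$ yields the dual embedding $(\pi \sdp \sigma)^\vee \hookrightarrow \std{\m^\Galinv;\sigma^\vee}$. Since $\supp\m_{=0} \subset \cuspred_\sigma^c$, Corollary~\ref{cor: stdirredcrit}.(2) then forces $\pi \sdp \sigma$ to be irreducible.

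For the reducibility direction, suppose $\Delta_1, \widetilde{\Delta_2}$ are linked. When $\Delta_1, \Delta_2$ are unlinked, $\pi = Z(\Delta_1) \times Z(\Delta_2)$, so the swap identifies $\pi \sdp \sigma$ with $Z(\Delta_1) \times Z(\widetilde{\Delta_2}) \sdp \sigma$; the Grothendieck-group decomposition of the reducible $Z(\Delta_1) \times Z(\widetilde{\Delta_2})$ into its two simple constituents (each having nonzero induction to $G$) gives length at least two for $\pi \sdp \sigma$. When $\Delta_1, \Delta_2$ are linked with opposite-sign exponents, $\pi$ is a proper subrepresentation of $Z(\Delta_1) \times Z(\Delta_2)$, so this shortcut fails; the swap still realizes $\pi \sdp \sigma$ as a subrepresentation of the \SI\ standard module $\std{\m_0;\sigma}$ with $\m_0 = \Delta_1 + \widetilde{\Delta_2}$, forcing $\soc(\pi \sdp \sigma) = \cos(\pi \sdp \sigma) = Z(\m_0;\sigma)$ (the cosocle part via $\Galinv$-duality and Theorem~\ref{thm: Zelclass}.\ref{part: Zvee}). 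To rule out $\pi \sdp \sigma = Z(\m_0;\sigma)$ I would compute $\comultG(\pi \sdp \sigma)$ via \eqref{eq: Tadic formula}, expanding $\comod(Z(\Delta_1+\Delta_2))$ from the ring-homomorphism property of $\comod$ together with the identity $[Z(\Delta_1) \times Z(\Delta_2)] = [\pi] + [Z((\Delta_1 \cup \Delta_2) + (\Delta_1 \cap \Delta_2))]$ in $\GrGL$, and compare with the analogous computation for $Z(\m_0;\sigma)$ to exhibit an extra simple constituent.

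The hard part is this last step. Since $\pi \sdp \sigma$ and the candidate irreducible $Z(\m_0;\sigma)$ already agree at the level of socle and cosocle, only a quantitative Jacquet-module comparison can detect the extra constituent. Carrying this out cleanly requires some bookkeeping, particularly when $\m = \Delta_1+\Delta_2$ fails to be a strict ladder (shared beginnings, shared endings, or one segment contained in the other), since then the clean formula \eqref{eq: comodladder} is unavailable and one must fall back on a direct geometric-lemma expansion with attention to multiplicities.
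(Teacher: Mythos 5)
Your overall strategy — standard-module embeddings plus Corollary~\ref{cor: stdirredcrit} for the irreducibility direction, and a quantitative Jacquet-module comparison for the reducibility direction — matches the paper's. But there are two genuine gaps.

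The more serious one is in the irreducibility direction, in the case where $\Delta_1$ and $\widetilde{\Delta_2}$ are \emph{linked} and $\expo(\Delta_1)\cdot\expo(\Delta_2)\ge0$ (say $\expo(\Delta_1)>\expo(\Delta_2)\ge0$, $\Delta_2\prec\Delta_1$). Your claim that the hypothesis ``ensures the resulting ordering of $\pstv{\Delta_1},\pstv{\Delta_2}$ is compatible with the one defining the standard module'' is not correct here. The first embedding $\pi\sdp\sigma\hookrightarrow\std{\m;\sigma}$ is indeed immediate. For the dual one, you start with
\[
\pi^\vee\sdp\sigma^\vee\hookrightarrow Z(\Delta_2^\vee)\times Z(\Delta_1^\vee)\sdp\sigma^\vee\simeq Z(\Delta_2^\vee)\times Z(\Delta_1^\Galinv)\sdp\sigma^\vee,
\]
and you then need to commute $Z(\Delta_2^\vee)$ past $Z(\Delta_1^\Galinv)$ at the $\GL$-level to reach $\std{\m^\Galinv;\sigma^\vee}=Z(\Delta_1^\Galinv)\times Z(\Delta_2^\Galinv)\sdp\sigma^\vee$. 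That commutation requires $Z(\Delta_2^\vee)\times Z(\Delta_1^\Galinv)$ to be irreducible, i.e., $\Delta_2^\vee$ and $\Delta_1^\Galinv$ unlinked — which is equivalent to $\Delta_1$ and $\widetilde{\Delta_2}$ unlinked, exactly what we are assuming fails. The swap isomorphism does not help, because $Z(\Delta_2^\vee)$ is not adjacent to $\sigma^\vee$. The paper circumvents this by writing $Z(\Delta_2^\vee)\times Z(\Delta_1^\Galinv)\sdp\sigma^\vee$ as a short exact sequence with kernel the irreducible $\Pi=Z(\Delta_1')\times Z(\Delta_2')\sdp\sigma^\vee$ and then showing $\Pi\not\le Z(\m^\vee)\sdp\sigma^\vee$, using a cogeneric-representation argument (Lemmas~\ref{lem: prodgen}, \ref{lem: negexpo}, \ref{lem: posexpo}: one compares the sign of the exponents appearing in the cogeneric constituents of $\comodmax$ of the two sides). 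This is the substantive content you would need to supply.

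The reducibility direction (linked, opposite-sign exponents) is the part you explicitly flag as ``the hard part'' without carrying it out. The paper's trick is cleaner than the $\comod$-expansion you sketch: after showing (via Lemma~\ref{lem: simpleSIcrit}) that $Z(\Delta_1+\Delta_2)\sdp\sigma$ is \SI\ with $\soc(Z(\Delta_1+\Delta_2)\sdp\sigma)=\soc(Z(\Delta_1'+\Delta_2')\sdp\sigma)$ for $\Delta_1'=\Delta_1\cup\widetilde{\Delta_2}$, $\Delta_2'=\widetilde{\Delta_1}\cap\Delta_2$, one compares lengths of \emph{minimal} Jacquet modules. For two-segment $Z(\m)$ supported in a single cuspidal line these lengths are explicit binomial coefficient differences ($\binom{N}{l_1}-\binom{N}{m}$ versus $\binom{N}{l_1'}-\binom{N}{m}$), and $l_1,l_2<l_1'$ gives a strict inequality; passing to $G$ multiplies both sides by $2^N$, yielding the contradiction. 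This avoids the case analysis over degenerate (non-strict-ladder) configurations that your sketch would require.
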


\begin{remark}
In the next section we will generalize this result.
Nevertheless, although not logically necessarily, we opted to first give a proof in the special case in order to illustrate the ideas.
\end{remark}

Suppose first that $\Delta_1$ and $\Delta_2$ are unlinked, i.e., that $Z(\Delta_1+\Delta_2)=Z(\Delta_1)\times Z(\Delta_2)$.
Then by Theorem \ref{thm: Zelclass} part \ref{part: stdirred}, $Z(\Delta_1+\Delta_2)\sdp\sigma$ is irreducible if and only if
$\Delta_1$ and $\widetilde{\Delta_2}$ are unlinked.
(Of course, for the ``only if'' direction we do not need the assumption $\Delta_i\cap\cuspred_\sigma=\emptyset$.)

Thus, for the rest of the proof we may assume that $\Delta_1$ and $\Delta_2$ are linked, and without loss of generality that
$\Delta_2\prec\Delta_1$.
In this case, we split Theorem \ref{thm: twosegments} to the following two assertions which will be proved below.

\begin{lemma} \label{lem: irredcrit}
Suppose that $\Delta_2\prec\Delta_1$ and $\Delta_i\cap\cuspred_\sigma=\emptyset$, $i=1,2$.
Suppose further that $\Delta_1$ and $\widetilde{\Delta_2}$ are unlinked or that $\expo(\Delta_1)\cdot\expo(\Delta_2)\ge0$.
Then $Z(\Delta_1+\Delta_2)\sdp\sigma$ is irreducible.
\end{lemma}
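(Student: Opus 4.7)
The plan is to apply Corollary \ref{cor: stdirredcrit}(2) to $Z(\m)\sdp\sigma$ with $\m=\Delta_1+\Delta_2$, so I need to produce the two embeddings $Z(\m)\sdp\sigma\hookrightarrow\std{\m;\sigma}$ and $Z(\m^\vee)\sdp\sigma^\vee\hookrightarrow\std{\m^\Galinv;\sigma^\vee}$. As a preliminary step I would use the involution $\tilde{\cdot}$ together with $\tilde\cuspred_\sigma=\cuspred_\sigma$, \eqref{eq: indtildeinv}, and the compatibility of $\tilde{\cdot}$ with $\prec$, $\expo$ and linkedness, to reduce either to the case where both $\expo(\Delta_i)\ge0$ (so $\m\in\MS{\Seg_{\ge0}}$) or to the case $\expo(\Delta_1)>0>\expo(\Delta_2)$ with $\Delta_1,\tilde\Delta_2$ still unlinked.

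The first embedding is immediate from $Z(\m)\hookrightarrow\std{\m}=Z(\Delta_1)\times Z(\Delta_2)$ (using $\Delta_1\not\prec\Delta_2$) upon inducing with $\sigma$. When $\pstv\m\ne\m$ (the mixed-sign case) one additionally invokes that $Z(\Delta_2)\sdp\sigma$ is irreducible by Theorem \ref{thm: oneseg} (since $\Delta_2\cap\cuspred_\sigma=\emptyset$), hence isomorphic to $Z(\tilde\Delta_2)\sdp\sigma$ by \eqref{eq: indtildeinv}, and uses induction in stages to identify $Z(\Delta_1)\times Z(\Delta_2)\sdp\sigma$ with $\std{\m;\sigma}=Z(\Delta_1)\times Z(\tilde\Delta_2)\sdp\sigma$.

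For the second embedding I would start from $Z(\m^\vee)\hookrightarrow\std{\m^\vee}=Z(\Delta_2^\vee)\times Z(\Delta_1^\vee)$ (the order forced by $\Delta_1^\vee\prec\Delta_2^\vee$) and induce with $\sigma^\vee$. Since $\cuspred_{\sigma^\vee}$ is the $\vee$-image of $\cuspred_\sigma$, each $Z(\Delta_i^\vee)\sdp\sigma^\vee$ is irreducible (Theorem \ref{thm: oneseg}) and isomorphic to $Z(\Delta_i^\Galinv)\sdp\sigma^\vee$ by \eqref{eq: indtildeinv}, so induction in stages allows me to swap $\Delta_i^\vee$ for $\Delta_i^\Galinv$ inside the product, one factor at a time. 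In the mixed-sign configuration the remaining reordering of $\GL$-factors needed to land in $\std{\m^\Galinv;\sigma^\vee}$ is supplied precisely by the unlinkedness of $\Delta_1$ and $\tilde\Delta_2$, transported through $\Galinv$ and $\vee$.

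The main obstacle will be the remaining case, where both $\expo(\Delta_i)\ge0$ and $\Delta_1,\tilde\Delta_2$ are linked; there the naive transport only produces an embedding into $Z(\Delta_2^\vee)\times Z(\Delta_1^\Galinv)\sdp\sigma^\vee$, and the final commutation to $Z(\Delta_1^\Galinv)\times Z(\Delta_2^\Galinv)\sdp\sigma^\vee$ is inaccessible at the $\GL$-level. To handle this I would invoke \eqref{eq: pisimple} instead: Lemma \ref{lem: simpleSIcrit} applied to $Z(\m^\vee)$ and $\sigma^\vee$, together with the ladder formula \eqref{eq: comodladder} for $\comod(Z(\m^\vee))$ (noting that $\m^\vee$ is a ladder because $\m$ is), yields that $Z(\m^\vee)\sdp\sigma^\vee$ is \SI; arguing that its socle is $Z(\m^\Galinv;\sigma^\vee)$ and combining with the matching identity $Z(\m;\sigma)^\vee=Z(\m^\Galinv;\sigma^\vee)$ from Theorem \ref{thm: Zelclass} then promotes the already-proved \SI\ property of $Z(\m)\sdp\sigma$ to full irreducibility via \eqref{eq: pisimple}.
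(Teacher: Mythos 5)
Your setup correctly identifies the target (Corollary \ref{cor: stdirredcrit}), the reduction via $\tilde{\cdot}$, and both easy regimes — mixed sign with $\Delta_1,\widetilde{\Delta_2}$ unlinked, and the direct $\GL$-level commutations — and these parts track the paper's proof. You also correctly flag the genuine difficulty: when both $\expo(\Delta_i)\ge0$ and $\Delta_1,\widetilde{\Delta_2}$ are linked, the transport stalls at $Z(\m^\vee)\sdp\sigma^\vee\hookrightarrow Z(\Delta_2^\vee)\times Z(\Delta_1^\Galinv)\sdp\sigma^\vee$ and the factors cannot be swapped at the $\GL$-level.

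However, your treatment of that case has a real gap. Establishing that $Z(\m^\vee)\sdp\sigma^\vee$ is \SI\ (via Lemma \ref{lem: simpleSIcrit} — in fact this is automatic for any irreducible $\pi$ and supercuspidal $\sigma$, since $\pi\otimes\one$ occurs with multiplicity one in $\comod(\pi)$, so the ladder formula is not really being used here) is not the issue. The issue is the next clause, ``arguing that its socle is $Z(\m^\Galinv;\sigma^\vee)$,'' for which you supply no argument; yet this is precisely the content of the lemma in the hard case, equivalent to the missing embedding itself (or to the socle match needed for \eqref{eq: pisimple}). The paper's proof of this is not a formal manipulation: one writes the length-two exact sequence
\[
0\rightarrow\Pi\rightarrow Z(\Delta_2^\vee)\times Z(\Delta_1^\Galinv)\sdp\sigma^\vee\rightarrow Z(\Delta_1^\Galinv+\Delta_2^\vee)\sdp\sigma^\vee\rightarrow 0,
\]
where $\Pi=Z(\Delta_2^\vee\cup\Delta_1^\Galinv)\times Z(\Delta_2^\vee\cap\Delta_1^\Galinv)\sdp\sigma^\vee$, shows $\Pi$ is irreducible, and then proves $\Pi\not\le Z(\m^\vee)\sdp\sigma^\vee$ by comparing cogeneric constituents of Jacquet modules: Lemma \ref{lem: negexpo} produces a cogeneric $\pi$ with all exponents $\le0$ on the side of $\Pi$, while Lemma \ref{lem: posexpo} shows no such cogeneric constituent can appear on the side of $Z(\m^\vee)\sdp\sigma^\vee$ (using $\Delta_2\prec\Delta_1$ and $\expo(\Delta_2)\ge0$). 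Only then can $Z(\m^\vee)\sdp\sigma^\vee$ be routed into the quotient $Z(\Delta_1^\Galinv+\Delta_2^\vee)\sdp\sigma^\vee\hookrightarrow\std{\m^\Galinv;\sigma^\vee}$, which identifies the socle. None of this — the exact sequence, the irreducibility of $\Pi$, and above all the exponent analysis — is present in your proposal, so the linked non-negative case is not actually handled.
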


\begin{lemma} \label{lem: 2segred}
Suppose that $\Delta_2\prec\Delta_1$, $\Delta_1$ and $\widetilde{\Delta_2}$ are linked and $\expo(\Delta_1)>0>\expo(\Delta_2)$.
Then $Z(\Delta_1+\Delta_2)\sdp\sigma$ is reducible.
\end{lemma}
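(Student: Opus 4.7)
The strategy mirrors the one used for Theorem~\ref{thm: mainred}: assuming irreducibility of $Z(\Delta_1+\Delta_2)\sdp\sigma$, compare two different length-two decompositions of a common ambient induced representation to detect a Jacquet-module mismatch. Set $\m=\Delta_1+\Delta_2$, $\n=(\Delta_1\cup\Delta_2)+(\Delta_1\cap\Delta_2)$ and $\tilde\n=(\Delta_1\cup\tilde\Delta_2)+(\Delta_1\cap\tilde\Delta_2)$.

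The starting point is the isomorphism of $G$-representations
\[
Z(\Delta_1)\times Z(\Delta_2)\sdp\sigma \;\cong\; Z(\Delta_1)\times Z(\tilde\Delta_2)\sdp\sigma,
\]
obtained by induction in stages once we note that $Z(\Delta_2)\sdp\sigma$ is irreducible (Theorem~\ref{thm: oneseg}, using $\Delta_2\cap\cuspred_\sigma=\emptyset$) and equals $Z(\tilde\Delta_2)\sdp\sigma$ as an irreducible (via \eqref{eq: indtildeinv}). Since $\Delta_2\prec\Delta_1$, Lemma~\ref{lem: length2} makes $Z(\Delta_1)\times Z(\Delta_2)$ of length two with composition factors $Z(\m)$ (the socle) and $Z(\n)$. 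Since $\Delta_1$ and $\tilde\Delta_2$ are linked, $Z(\Delta_1)\times Z(\tilde\Delta_2)$ is likewise of length two with factors $Z(\Delta_1+\tilde\Delta_2)$ and $Z(\tilde\n)$. Equating Jordan--H\"older content on the $G$-side yields the Grothendieck identity
\[
Z(\m)\sdp\sigma + Z(\n)\sdp\sigma \;=\; Z(\Delta_1+\tilde\Delta_2)\sdp\sigma + Z(\tilde\n)\sdp\sigma. \tag{$\ast$}
\]
Observe that $\pstv\m=\Delta_1+\tilde\Delta_2$, so Corollary~\ref{cor: Tadanalog} gives
$\xi_0:=Z(\m;\sigma)=Z(\Delta_1+\tilde\Delta_2;\sigma)\le Z(\m)\sdp\sigma$ and $\xi_0\le Z(\Delta_1+\tilde\Delta_2)\sdp\sigma$.

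To extract reducibility, I would assume for contradiction that $Z(\m)\sdp\sigma$ is irreducible, hence equal to $\xi_0$, and compute the coefficient of the Jacquet component $Z(\Delta_1)\times Z(\tilde\Delta_2)\otimes\sigma$ on both sides of $(\ast)$ via \eqref{eq: Tadic formula}. The ladder formula \eqref{eq: comodmaxladder} applied to the ladder $\m$ produces this component with the extremal choice $\rho_1=e(\Delta_1)$, $\rho_2=\lshft{b(\Delta_2)}$, giving exactly $Z(\Delta_1)\times \tilde Z(\Delta_2)=Z(\Delta_1)\times Z(\tilde\Delta_2)$ with multiplicity one from $\comodmax(Z(\m))$; in particular $Z(\Delta_1)\times Z(\tilde\Delta_2)\otimes\sigma$ appears in $\comultG(Z(\m)\sdp\sigma)$ with multiplicity at least two (once from this ladder term and once from the ``trivial'' $\beta=\pi$ contribution), while a single irreducible $\xi_0=Z(\Delta_1+\tilde\Delta_2;\sigma)\hookrightarrow\std{\Delta_1+\tilde\Delta_2}\sdp\sigma$ can only absorb this with the smaller multiplicity coming from $\comod$ of its standard-module shell.

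\textbf{Main obstacle.} The technical core is precisely this bookkeeping of multiplicities: one must verify that the $Z(\Delta_1)\times Z(\tilde\Delta_2)\otimes\sigma$-isotypic component of $\comultG(Z(\m)\sdp\sigma)$ is strictly larger than the sum of the corresponding components of $Z(\Delta_1+\tilde\Delta_2)\sdp\sigma$ and $Z(\tilde\n)\sdp\sigma$ contributed through $\xi_0$ alone. In other words, starting from $(\ast)$ one has to trace, via the explicit ladder formulas \eqref{eq: comladder}--\eqref{eq: comodmaxladder} and the single-segment formula \eqref{eq: comodseg} applied to the two segments of $\tilde\n$, that the ``extra'' $Z(\tilde\Delta_2)\otimes Z(\Delta_1)$-type term arising from $\Delta_2\prec\Delta_1$ in $\comodmax(Z(\m))$ cannot be cancelled against the terms coming from $\tilde\n$. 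The hypothesis $\expo(\Delta_1)>0>\expo(\Delta_2)$ (equivalently, $\expo(\tilde\Delta_2)>0$) is used here to keep $Z(\tilde\Delta_2)\sdp\sigma$ genuinely different from $Z(\Delta_2)\sdp\sigma$ at the level of standard modules, which is what prevents the cancellation and forces the desired second composition factor in $Z(\m)\sdp\sigma$.
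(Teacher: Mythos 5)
Your setup through the identity $(\ast)$ is correct: since $Z(\Delta_2)\sdp\sigma$ is irreducible (Theorem~\ref{thm: oneseg}), it equals $Z(\tilde\Delta_2)\sdp\sigma$, and induction in stages plus the length-two decompositions of $Z(\Delta_1)\times Z(\Delta_2)$ and $Z(\Delta_1)\times Z(\tilde\Delta_2)$ give the Grothendieck identity
\[
Z(\m)\sdp\sigma + Z(\n)\sdp\sigma \;=\; Z(\pstv\m)\sdp\sigma + Z(\tilde\n)\sdp\sigma.
\]
However, there is a genuine logical gap in the remainder of the plan. The identity $(\ast)$ is an \emph{unconditional} equality in $\Gr^G$, holding whether or not $Z(\m)\sdp\sigma$ is irreducible. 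Computing ``the coefficient of $Z(\Delta_1)\times Z(\tilde\Delta_2)\otimes\sigma$ on both sides of $(\ast)$'' therefore yields the same number on both sides by construction and cannot produce a contradiction. The contradiction has to come from comparing the \emph{individual} summands. Carrying out your program correctly, the assumption $Z(\m)\sdp\sigma=\xi_0$ together with $\xi_0\le Z(\pstv\m)\sdp\sigma$ (from \SI{}-ness of $\std{\pstv\m}\sdp\sigma$) and $(\ast)$ gives
\[
Z(\n)\sdp\sigma \;\ge\; Z(\tilde\n)\sdp\sigma \quad\text{in }\Gr^G,
\]
and it is this inequality one would have to refute. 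That refutation is the actual content of the lemma, and it is not supplied: the proposed multiplicity count from $\comodmax(Z(\m))$ only re-derives the coefficient of $Z(\pstv\m)\otimes\sigma$ in $\comultG(\xi_0)$, which is internally consistent and gives no obstruction. (The phrase ``multiplicity at least two, once from the ladder term and once from the trivial $\beta=\pi$ contribution'' is not coherent here: $Z(\Delta_1)\times Z(\tilde\Delta_2)$ is not irreducible, $Z(\tilde\m)$ is not equal to it, and the naive $\pi\otimes\one$ summand of $\comult(Z(\m))$ contributes nothing to the $\deg=\deg\m$ piece of $\comodmax(Z(\m))$ other than $Z(\tilde\m)$.)

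For comparison, the paper proves the lemma by a quite different route. After reducing to $\Delta_1\prec\widetilde{\Delta_2}$, one embeds $Z(\Delta_1+\Delta_2)\sdp\sigma$ into the three-factor induced representation $Z(\Delta_1\cap\widetilde{\Delta_2})\times Z(\Delta_1)\times Z(\widetilde{\Delta_2}\setminus\Delta_1)\sdp\sigma$, shows via Lemma~\ref{lem: simpleSIcrit} and a one-term Jacquet computation that this is \SI, and identifies its socle with $\soc(Z(\Delta_1'+\Delta_2')\sdp\sigma)$ for $\Delta_1'=\Delta_1\cup\widetilde{\Delta_2}$, $\Delta_2'=\widetilde{\Delta_1}\cap\Delta_2$. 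Finally, irreducibility of $Z(\Delta_1+\Delta_2)\sdp\sigma$ would force $Z(\Delta_1+\Delta_2)\sdp\sigma \le Z(\Delta_1'+\Delta_2')\sdp\sigma$, which is disproved by comparing lengths of the minimal Jacquet module: $\ell(\JacGLnm_{\min} Z(\Delta_1+\Delta_2))=\binom{N}{l_1}-\binom{N}{m}$ strictly exceeds $\ell(\JacGLnm_{\min} Z(\Delta_1'+\Delta_2'))=\binom{N}{l_1'}-\binom{N}{m}$ since $l_1'>l_1,l_2$. Your sketch would need a comparable quantitative step (e.g.\ a length comparison between $Z(\n)\sdp\sigma$ and $Z(\tilde\n)\sdp\sigma$), and it is not clear that such a comparison in fact goes the right way; this is the core difficulty your proposal leaves unresolved.
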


\subsection{Auxiliary results}
Denote by $\IrrgenGL\subset\IrrGL$ the subset of irreducible generic representations.
In terms of the Zelevinsky classifications, these are the irreducible representations corresponding to multisegments consisting
of singleton segments, i.e.,
\[
\IrrgenGL=\{Z(\m):\m\in\MS{\{\rho\}:\rho\in\CuspGL}\}.
\]
In terms of the Langlands classification, $\IrrgenGL$ corresponds to the multisegments consisting of pairwise unlinked segments
(i.e., such that $\cstd{\m}=L(\m)$).
Dually, we say that $\pi\in\IrrGL$ is \emph{cogeneric} if $\pi=\std{\m}$ for some $\m\in\MS{\Seg}$.\footnote{The cogeneric representations
with a non-trivial vector fixed under the Iwahori subgroup are exactly the unramified representations.}
Denote by $\IrrcogenGL$ the set of irreducible cogeneric representations.
The sets $\IrrgenGL$ and $\IrrcogenGL$ correspond under the Zelevinsky involution.

The following is well known.
\begin{lemma} \label{lem: prodgen}
Let $\pi_1,\pi_2\in\IrrGL$. Then $\pi\le\pi_1\times\pi_2$ for some $\pi\in\IrrgenGL$ if and only if $\pi_1,\pi_2\in\IrrgenGL$.
In this case, $\pi$ is uniquely determined by $\pi_1$ and $\pi_2$ and it occurs with multiplicity one in $\JH(\pi_1\times\pi_2)$.
An analogous statement holds for $\IrrcogenGL$ (by applying the Zelevinsky involution).
\end{lemma}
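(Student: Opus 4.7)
The plan is to treat the two implications separately: the forward direction together with multiplicity one follows from the multiplicity-one statement cited just before the lemma, while the reverse direction I would obtain from Rodier's theorem on Whittaker models.

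For the forward direction, write $\pi_i = Z(\m_i)$ with each $\m_i$ a sum of singleton segments; then $\m := \m_1 + \m_2$ is again a sum of singletons, so $\pi := Z(\m) \in \IrrgenGL$, and the cited fact gives $\pi \le \pi_1 \times \pi_2$ with multiplicity one in $\JH(\pi_1 \times \pi_2)$. For uniqueness within $\IrrgenGL$, observe that any generic constituent $\pi' = Z(\m')$ has supercuspidal support $\data_{\pi_1} + \data_{\pi_2} = \m$ (thinking of $\m$ as a multiset of cuspidals), and a singleton multisegment is determined by its underlying multiset, so $\m' = \m$ and $\pi' = \pi$.

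For the reverse direction, suppose some $\pi \in \IrrgenGL$ appears in $\JH(\pi_1 \times \pi_2)$. Let $\mathcal{W}$ denote the Whittaker functor with respect to a non-degenerate character of the maximal unipotent of $\GL_n$; it is exact, satisfies $\dim \mathcal{W}(\tau) \le 1$ for every $\tau \in \IrrGL$ with equality characterizing $\tau \in \IrrgenGL$, and by Rodier's theorem is multiplicative under parabolic induction: $\dim \mathcal{W}(\pi_1 \times \pi_2) = \dim \mathcal{W}(\pi_1) \cdot \dim \mathcal{W}(\pi_2)$. Exactness gives $\dim \mathcal{W}(\pi_1 \times \pi_2) \ge \dim \mathcal{W}(\pi) = 1$, forcing both $\dim \mathcal{W}(\pi_i) = 1$ and hence $\pi_1, \pi_2 \in \IrrgenGL$.

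The cogeneric analogue then follows by applying the Zelevinsky--Aubert involution $^t$, which is a ring automorphism of $\GrGL$ exchanging $\IrrgenGL$ and $\IrrcogenGL$. The main (and rather mild) obstacle is invoking Rodier's multiplicativity correctly; as an alternative staying entirely within the combinatorics used in the paper, one could deduce the reverse direction from the standard description of the irreducible constituents of $Z(\m_1) \times Z(\m_2)$ as arising from $\m_1 + \m_2$ by successive merges of pairs of linked segments, since such a merge cannot turn a segment of length $\ge 2$ into singletons.
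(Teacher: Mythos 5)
The paper itself gives no argument here, remarking only that the lemma is ``well known,'' so there is no intrinsic proof to compare against; what matters is whether your argument is sound, and it is. Both halves are correct: the forward direction with multiplicity one follows, as you say, from the remark in \S\ref{sec: GLn} that $Z(\m_1+\m_2)$ occurs with multiplicity one in $\JH(Z(\m_1)\times Z(\m_2))$, and your uniqueness argument (all constituents of $\pi_1\times\pi_2$ share the cuspidal support $\data_{\pi_1}+\data_{\pi_2}$, and a singleton multisegment is determined by its underlying cuspidal multiset) is clean and complete. The reverse direction via the Whittaker functor is standard and correct: exactness of the twisted Jacquet functor plus Gelfand--Kazhdan multiplicity one plus heredity of Whittaker models under parabolic induction does force $\dim\mathcal{W}(\pi_i)=1$ for $i=1,2$. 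Your alternative combinatorial argument is also valid and arguably closer to the Bernstein--Zelevinsky spirit of the paper: every constituent of $Z(\m_1)\times Z(\m_2)$ is $Z(\n)$ with $\n$ obtained from $\m_1+\m_2$ by a (possibly empty) sequence of elementary ``merge'' operations in Zelevinsky's partial order, each such merge strictly increases the maximal segment length, so if $\n$ consists of singletons then $\n=\m_1+\m_2$, forcing $\m_1,\m_2$ to consist of singletons. Finally, passing to the cogeneric statement via the Zelevinsky--Aubert involution $^t$ (a ring automorphism of $\GrGL$ exchanging $Z(\m)\leftrightarrow L(\m)$, hence $\IrrgenGL\leftrightarrow\IrrcogenGL$) is exactly what the statement of the lemma itself suggests.
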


\begin{lemma} \label{lem: negexpo}
For any $\m\in\MS{\Seg}$ there exists $\pi\in\IrrcogenGL$ such that $\expo(\rho)\le0$ for all $\rho\in\supp\pi$ and
\[
\pi\le\comodmax(\std{\m}).
\]
Consequently,
\[
\pi\otimes\sigma\le\comultG(\std{\m}\sdp\sigma).
\]
(In fact, this holds for any $\sigma\in\RepsG$.)
\end{lemma}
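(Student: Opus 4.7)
The first move is to observe that $\comult$ is a ring homomorphism on $\GrGL$, hence so is $\comodmax = \times \circ s \circ \comult$ (using commutativity of $\GrGL$). Applying \eqref{eq: comodmaxseg} to each $\Delta_i$ in $\m = \Delta_1 + \dots + \Delta_k$ then gives the explicit formula
\[
\comodmax(\std{\m}) = \sum_{(\rho_i) \in \prod_i \lextend{\Delta_i}} \prod_{i=1}^k Z([b(\Delta_i), \rho_i]) \times \prod_{i=1}^k \tilde Z([\rshft{\rho_i}, e(\Delta_i)]),
\]
and the strategy is to pick the tuple $(\rho_i)$ so that the corresponding summand has cuspidal support entirely of non-positive exponent, then extract a cogeneric constituent from it.

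For each $i$, write $a_i = \expo(b(\Delta_i))$, $b_i = \expo(e(\Delta_i))$, $c_i = \expo(\rho_i)$. The cuspidal support of $Z([b(\Delta_i), \rho_i]) \times \tilde Z([\rshft{\rho_i}, e(\Delta_i)])$ has exponents forming the nonempty portion of $\{a_i, \dots, c_i\} \cup \{-c_i - 1, \dots, -b_i\}$, so the non-positivity condition amounts to $c_i \le 0$ whenever the left segment is nonempty and $c_i \ge -1$ whenever the right one is. I would handle this by three cases: take $\rho_i = \lshft{b(\Delta_i)}$ if $a_i \ge 0$ (left empty); take $\rho_i = e(\Delta_i)$ if $b_i \le 0$ (right empty); and otherwise ($a_i < 0 < b_i$) take $c_i$ to be the largest element of $\{a_i, a_i + 1, \dots, b_i - 1\}$ that is $\le 0$---its successor in the unit-spaced set must exceed $0$, forcing $c_i \in (-1, 0]$.

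With $\Pi$ denoting the chosen summand, each factor $Z(\Delta)$ is $\std{\{\Delta\}}$ and hence cogeneric, so iterating the cogeneric version of Lemma \ref{lem: prodgen} produces a unique $\pi \in \IrrcogenGL$ with $\pi \le \Pi$, whose cuspidal support inherits the non-positivity from $\Pi$; this yields $\pi \le \comodmax(\std{\m})$ as required. For the consequence I would invoke the Tadic formula \eqref{eq: Tadic formula}: $\comultG(\sigma)$ contains $\one \otimes \sigma$ with multiplicity one (from the $k = 0$ Jacquet), while the ``$\beta = \one$'' part of $\comod(\std{\m})$ unwinds to $\comodmax(\std{\m}) \otimes \one$ directly from the definitions of $\comults$ and $\comod$, so pairing these two contributions gives $\comodmax(\std{\m}) \otimes \sigma \le \comultG(\std{\m} \sdp \sigma)$ and hence $\pi \otimes \sigma \le \comultG(\std{\m} \sdp \sigma)$. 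The only non-formal step is the case analysis for $\rho_i$, which I expect to be the main (if routine) obstacle.
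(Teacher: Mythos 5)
Your proof is correct and follows essentially the same route as the paper: apply the formula $\comodmax(Z(\Delta))=\sum_{\rho\in\lextend\Delta}Z([b(\Delta),\rho])\times\tilde Z([\rshft{\rho},e(\Delta)])$, choose $\rho$ in each segment to make all resulting exponents non-positive, and invoke Lemma \ref{lem: prodgen} together with the multiplicativity of $\comodmax$. The paper reduces to a single segment first and then picks $\rho$; you pick $\rho_i$ for each $\Delta_i$ and then extract the cogeneric constituent at the end, which is the same argument in a slightly different order, and your case analysis, though split into three cases rather than the paper's two, is equivalent.
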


\begin{proof}
Since $\comodmax$ is a homomorphism, it is enough by Lemma \ref{lem: prodgen} to consider the case $\m=\Delta\in\Seg$.
This case follows from the formula \eqref{eq: comodmaxseg}.
Indeed, if $\Delta=\{\rho_1,\dots,\rho_l\}$ with $\rho_{i+1}=\rshft{\rho_i}$, $i=1,\dots,l-1$ then we take
$\rho=\rho_i$ where $i$ is the largest index such that $\expo(\rho_i)\le0$ if $\expo(\rho_1)\le0$ and $\rho=\lshft{\rho_1}$ otherwise.
\end{proof}

\begin{lemma} \label{lem: posexpo}
Let $\Delta_1,\Delta_2\in\Seg$ be such that $\Delta_2\prec\Delta_1$ and $\expo(\Delta_2)\ge0$.
Suppose that
\[
\pi\otimes\sigma\le\comultG(Z(\Delta_1+\Delta_2)\sdp\sigma)
\]
for some $\pi\in\IrrcogenGL$. Then there exists $\rho\in\supp\pi$ such that $\expo(\rho)>0$.
\end{lemma}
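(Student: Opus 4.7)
The plan is to reduce the hypothesis to a statement about $\comodmax(Z(\Delta_1+\Delta_2))$ alone, and then dispose of it by a combinatorial case analysis built on the explicit ladder formula.

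I would begin by observing that since $\sigma$ is supercuspidal, $\comultG(\sigma)=\one\otimes\sigma$. By Tadi\'c's formula \eqref{eq: Tadic formula},
\[
\comultG(Z(\Delta_1+\Delta_2)\sdp\sigma)=\comod(Z(\Delta_1+\Delta_2))\sdp(\one\otimes\sigma).
\]
For bi-degree reasons, $\pi\otimes\sigma$ can only come from terms in which the second tensor factor of $\comod(Z(\Delta_1+\Delta_2))$ equals $\one$: if $\delta\in\IrrGL$ is nontrivial then $\sigma$ is not a subquotient of $\delta\sdp\sigma$ (the cuspidal support of the latter is strictly larger). Those surviving terms assemble into $\comodmax(Z(\Delta_1+\Delta_2))\otimes\one$, so the hypothesis becomes equivalent to $\pi\le\comodmax(Z(\Delta_1+\Delta_2))$.

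Next, the condition $\Delta_2\prec\Delta_1$ forces $\Delta_1,\Delta_2$ onto a common cuspidal line with $b(\Delta_2)<b(\Delta_1)$ and $e(\Delta_2)<e(\Delta_1)$, so $\Delta_1+\Delta_2$ is a ladder of length two. Applying \eqref{eq: comodmaxladder},
\[
\comodmax(Z(\Delta_1+\Delta_2))=\sum_{\substack{\rho_i\in\lextend\Delta_i\\ \rho_1>\rho_2}} Z([b(\Delta_1),\rho_1]+[b(\Delta_2),\rho_2])\times\tilde Z([\rshft{\rho_1},e(\Delta_1)]+[\rshft{\rho_2},e(\Delta_2)]).
\]
By Lemma \ref{lem: prodgen} a cogeneric $\pi$ appears only in those summands in which both factors are themselves cogeneric. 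For a two-segment multisegment, $Z(\Delta'+\Delta'')$ is cogeneric if and only if $\Delta'$ and $\Delta''$ are unlinked, which gives an explicit constraint on the admissible pairs $(\rho_1,\rho_2)$, and the support of the corresponding cogeneric constituent is the union of the supports of the two factors.

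Finally, I would assume for contradiction that $\expo(\rho)\le 0$ for every $\rho\in\supp\pi$. Using $\expo(\tilde\rho)=-\expo(\rho)$, this forces $\expo(\rho_i)\le 0$ whenever $[b(\Delta_i),\rho_i]$ is nonempty and $\expo(\rho_i)\ge -1$ whenever $[\rshft{\rho_i},e(\Delta_i)]$ is nonempty. Combining these bounds with $\rho_1>\rho_2$, with the ladder inequalities implicit in $\Delta_2\prec\Delta_1$, with the two unlinking requirements from the previous step, and with the hypothesis $\expo(b(\Delta_2))+\expo(e(\Delta_2))\ge 0$, a finite case analysis--split by which of the four segments $[b(\Delta_i),\rho_i]$, $[\rshft{\rho_i},e(\Delta_i)]$ are empty and, when applicable, by which of the unlinking alternatives (one side empty, containment, or disjoint non-adjacent) is realized--rules out every possibility.

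The main obstacle is this concluding case analysis: organizing the bookkeeping so that each of the two standing hypotheses ($\Delta_2\prec\Delta_1$ and $\expo(\Delta_2)\ge 0$) is invoked exactly where needed. I expect the sharpest subcase to be the one where all four segments are nonempty, in which the right-hand unlinking forces $\expo(\rho_2)\ge -1$ while the left-hand unlinking forces $\expo(b(\Delta_1))\ge\expo(\rho_2)+2\ge 1$, hence $\expo(\rho_1)\ge\expo(b(\Delta_1))\ge 1$, contradicting $\expo(\rho_1)\le 0$.
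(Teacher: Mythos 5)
Your reduction to $\pi\le\comodmax(Z(\Delta_1+\Delta_2))$, the invocation of \eqref{eq: comodmaxladder}, and the use of Lemma \ref{lem: prodgen} to force both tensor factors to be cogeneric exactly match the paper's argument. Where you diverge is the endgame: you propose a contradiction argument with a multi-way case split on which of the four intermediate segments are empty, whereas the paper extracts everything it needs from the cogenericity of the \emph{right} factor $\tilde Z(\m_2)$ alone. Concretely, cogenericity of $Z([\rshft{\rho_1},e(\Delta_1)]+[\rshft{\rho_2},e(\Delta_2)])$ combined with $\rho_1>\rho_2$ and $e(\Delta_2)<e(\Delta_1)$ forces $\rho_1>e(\Delta_2)$ in every case (both degenerate ones with an empty segment and the generic one), and then $\Delta_2\prec\Delta_1$ places $e(\rshft{\Delta_2})$ inside $[b(\Delta_1),\rho_1]\subset\supp\pi$, with $\expo(e(\rshft{\Delta_2}))=\expo(e(\Delta_2))+1\ge\expo(\Delta_2)+1>0$. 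This sidesteps the case analysis entirely and never separately consults the left factor's unlinking constraint. Your "sharpest subcase" computation is correct as far as it goes, but note that it curiously does not invoke $\expo(\Delta_2)\ge0$ -- that hypothesis only enters in the subcases where one of $[b(\Delta_i),\rho_i]$ is empty -- so the worked subcase doesn't by itself illustrate where all the hypotheses are used; verifying that the remaining cases either yield a contradiction or are vacuous is genuinely required work that your sketch defers.
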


\begin{proof}
By \eqref{eq: Tadic formula} and the supercuspidality of $\sigma$ we have $\pi\le\comodmax(Z(\Delta_1+\Delta_2))$.
By \eqref{eq: comodmaxladder} there exist $\rho_1\in\lextend\Delta_1$ and $\rho_2\in\lextend\Delta_2$ with $\rho_2<\rho_1$ such that
\[
\pi\le  Z(\m_1)\times Z(\tilde\m_2)
\]
where $\m_1=[b(\Delta_1),\rho_1]+[b(\Delta_2),\rho_2]$ and $\m_2=[\rshft{\rho_1},e(\Delta_1)]+[\rshft{\rho_2},e(\Delta_2)]$.
Thus, by Lemma \ref{lem: prodgen} $Z(\m_2)$ is cogeneric and therefore $\rho_1>e(\Delta_2)$.
Hence, $e(\rshft{\Delta_2})\in [b(\Delta_1),\rho_1]\subset\supp\pi$ (since $\Delta_2\prec\Delta_1$) while $\expo(e(\Delta_2))\ge\expo(\Delta_2)\ge0$.
\end{proof}

\subsection{Proof of Lemma \ref{lem: irredcrit}}
We show that
\begin{equation} \label{eq: embeds}
Z(\Delta_1+\Delta_2)\sdp\sigma\hookrightarrow\std{\Delta_1+\Delta_2;\sigma}
\end{equation}
and
\begin{equation} \label{eq: surjects}
Z(\Delta_1^\vee+\Delta_2^\vee)\sdp\sigma^\vee\hookrightarrow\std{\Delta_1^\Galinv+\Delta_2^\Galinv;\sigma^\vee}.
\end{equation}

We first consider the case where $\expo(\Delta_1)>0>\expo(\Delta_2)$ and $\Delta_1$, $\widetilde{\Delta_2}$ are unlinked.
In this case,
\[
Z(\Delta_1+\Delta_2)\sdp\sigma\hookrightarrow Z(\Delta_1)\times Z(\Delta_2)\sdp\sigma\simeq
Z(\Delta_1)\times Z(\widetilde{\Delta_2})\sdp\sigma=\std{\Delta_1+\Delta_2;\sigma}
\]
while
\begin{multline*}
Z(\Delta_1^\vee+\Delta_2^\vee)\sdp\sigma^\vee\hookrightarrow Z(\Delta_2^\vee)\times Z(\Delta_1^\vee)\sdp\sigma^\vee\simeq\\
Z(\Delta_2^\vee)\times Z(\Delta_1^\Galinv)\sdp\sigma^\vee\simeq Z(\Delta_1^\Galinv)\times Z(\Delta_2^\vee)\sdp\sigma^\vee=
\std{\Delta_1^\Galinv+\Delta_2^\Galinv;\sigma^\vee}
\end{multline*}
as required.

It remains to consider the case $\expo(\Delta_1)\cdot\expo(\Delta_2)\ge0$.
Without loss of generality we may assume that $\expo(\Delta_1)>\expo(\Delta_2)\ge0$ -- otherwise replace $(\Delta_1,\Delta_2)$
by $(\widetilde{\Delta_2},\widetilde{\Delta_1})$. We assume it for the rest of the proof.
Then $\std{\Delta_1+\Delta_2;\sigma}=Z(\Delta_1)\times Z(\Delta_2)\sdp\sigma$ and \eqref{eq: embeds} is clear.
It remains to prove \eqref{eq: surjects}.
We have
\begin{equation} \label{eq: 1stembed}
Z(\Delta_1^\vee+\Delta_2^\vee)\sdp\sigma^\vee\hookrightarrow Z(\Delta_2^\vee)\times Z(\Delta_1^\vee)\sdp\sigma^\vee\simeq\\
Z(\Delta_2^\vee)\times Z(\Delta_1^\Galinv)\sdp\sigma^\vee.
\end{equation}
As before, in the case where $\Delta_1$, $\widetilde{\Delta_2}$ are unlinked we have
\[
Z(\Delta_2^\vee)\times Z(\Delta_1^\Galinv)\sdp\sigma^\vee
\simeq Z(\Delta_1^\Galinv)\times Z(\Delta_2^\vee)\sdp\sigma^\vee\simeq
Z(\Delta_1^\Galinv)\times Z(\Delta_2^\Galinv)\sdp\sigma^\vee=\std{\Delta_1^\Galinv+\Delta_2^\Galinv;\sigma^\vee}.
\]
Thus, we may assume that $\Delta_1$, $\widetilde{\Delta_2}$ are linked, in which case
necessarily $\widetilde{\Delta_2}\prec\Delta_1$ (or equivalently, $\widetilde{\Delta_1}\prec\Delta_2$) since $\expo(\Delta_1)>0\ge\expo(\widetilde{\Delta_2})$.
We thus have an exact sequence
\begin{equation} \label{eq: es}
0\rightarrow \Pi:=Z(\Delta_1')\times Z(\Delta_2')\sdp\sigma^\vee\rightarrow Z(\Delta_2^\vee)\times Z(\Delta_1^\Galinv)\sdp\sigma^\vee\rightarrow
Z(\Delta_1^\Galinv+\Delta_2^\vee)\sdp\sigma^\vee\rightarrow0,
\end{equation}
where $\Delta_1'=\Delta_2^\vee\cup\Delta_1^\Galinv$ and $\Delta_2'=\Delta_2^\vee\cap\Delta_1^\Galinv\subset\Delta_1'$.
We claim that $\Pi$ is irreducible. Indeed, $\widetilde{\Delta_1'}=\Delta_2^\Galinv\cup\Delta_1^\vee=[e(\Delta_1)^\vee,e(\Delta_2^\Galinv)]$
and $\Delta_2'=[b(\Delta_1^\Galinv),b(\Delta_2)^\vee]$ since $\widetilde{\Delta_2}\prec\Delta_1$.
Thus, $\widetilde{\Delta_1'}\supset\Delta_2'$ (since $\expo(\Delta_1),\expo(\Delta_2)\ge 0$) and in particular $\widetilde{\Delta_1'}$ and $\Delta_2'$ are unlinked.
The irreducibility of $\Pi$ follows from Theorem \ref{thm: Zelclass} part \ref{part: stdirred}.

By Lemmas \ref{lem: negexpo} and \ref{lem: posexpo} we have
\[
\comultG(\Pi)\not\le\comultG(Z(\Delta_1^\Galinv+\Delta_2^\Galinv)\sdp\sigma^\vee)=\comultG(Z(\Delta_1^\vee+\Delta_2^\vee)\sdp\sigma^\vee)
\]
and therefore
\[
\Pi\not\le Z(\Delta_1^\vee+\Delta_2^\vee)\sdp\sigma^\vee.
\]
Hence, it follows from \eqref{eq: 1stembed} and \eqref{eq: es} that
\begin{multline*}
Z(\Delta_1^\vee+\Delta_2^\vee)\sdp\sigma^\vee\hookrightarrow Z(\Delta_1^\Galinv+\Delta_2^\vee)\sdp\sigma^\vee\hookrightarrow
Z(\Delta_1^\Galinv)\times Z(\Delta_2^\vee)\sdp\sigma^\vee\simeq Z(\Delta_1^\Galinv)\times Z(\Delta_2^\Galinv)\sdp\sigma^\vee\\
=\std{\Delta_1^\Galinv+\Delta_2^\Galinv;\sigma^\vee}
\end{multline*}
as required.

\subsection{Proof of Lemma \ref{lem: 2segred}}
By Theorem \ref{thm: mainred} we may assume that $\Delta_i\cap\cuspred_\sigma=\emptyset$, $i=1,2$.
Upon replacing $(\Delta_1,\Delta_2)$ by $(\widetilde{\Delta_2},\widetilde{\Delta_1})$ if necessary,
we may assume without loss of generality that $\Delta_1\prec\widetilde{\Delta_2}$. Then
\begin{multline*}
Z(\Delta_1+\Delta_2)\sdp\sigma\hookrightarrow Z(\Delta_1)\times Z(\Delta_2)\sdp\sigma\simeq
Z(\Delta_1)\times Z(\widetilde{\Delta_2})\sdp\sigma\hookrightarrow\\
\Pi:=Z(\Delta_1)\times Z(\Delta_1\cap\widetilde{\Delta_2})\times
Z(\widetilde{\Delta_2}\setminus\Delta_1)\sdp\sigma=
Z(\Delta_1\cap\widetilde{\Delta_2})\times Z(\Delta_1)\times
Z(\widetilde{\Delta_2}\setminus\Delta_1)\sdp\sigma.
\end{multline*}
Since $\expo(\Delta_1)>0$ and $\Delta_2\prec\Delta_1$, it is easy to see that
\[
\comod_{\{\data_{\Delta_1}+\data_{\Delta_1\cap\widetilde{\Delta_2}}\}}
(Z(\Delta_1)\times Z(\Delta_1\cap\widetilde{\Delta_2})\times Z(\widetilde{\Delta_2}\setminus\Delta_1))=
Z(\Delta_1)\times Z(\Delta_1\cap\widetilde{\Delta_2})\otimes Z(\widetilde{\Delta_2}\setminus\Delta_1).
\]
Hence the irreducible representation
\[
Z(\Delta_1)\times Z(\Delta_1\cap\widetilde{\Delta_2})\otimes Z(\widetilde{\Delta_2}\setminus\Delta_1)\sdp\sigma
\]
occurs with multiplicity one in $\comultG(\Pi)$.
Thus, $\Pi$ is \SI\ by Lemma \ref{lem: simpleSIcrit} and therefore $Z(\Delta_1+\Delta_2)\sdp\sigma$ is \SI\ and $\soc(Z(\Delta_1+\Delta_2)\sdp\sigma)=\soc(\Pi)$.
On the other hand,
\begin{multline*}
\soc(\Pi)=\soc(Z(\Delta_1)\times Z(\widetilde{\Delta_2})\sdp\sigma)=
\soc(Z(\Delta_1\cup\widetilde{\Delta_2})\times Z(\Delta_1\cap\widetilde{\Delta_2})\sdp\sigma)\\=\soc(Z(\Delta'_1)\times Z(\Delta'_2)\sdp\sigma)
\end{multline*}
where we write $\Delta'_1=\Delta_1\cup\widetilde{\Delta_2}$, $\Delta'_2=\widetilde{\Delta_1}\cap\Delta_2$.
Note that $\Delta'_2\prec\Delta'_1$ since $\expo(\Delta_1)>0>\expo(\Delta_2)$ and $\Delta_2\prec\Delta_1$.
Thus,
\[
\soc(Z(\Delta_1+\Delta_2)\sdp\sigma)=\soc(\Pi)=\soc(Z(\Delta'_1+\Delta'_2)\sdp\sigma).
\]
(In fact, $Z(\Delta'_1+\Delta'_2)\sdp\sigma$ is irreducible by Lemma \ref{lem: irredcrit} since
$\Delta'_1$ contains (and in particular, is unlinked with) $\widetilde{\Delta_2'}$. However, we will not need to use this fact.)

Suppose on the contrary that $Z(\Delta_1+\Delta_2)\sdp\sigma$ is irreducible. Then
\begin{equation} \label{eq: contra<=}
Z(\Delta_1+\Delta_2)\sdp\sigma\le Z(\Delta'_1+\Delta'_2)\sdp\sigma.
\end{equation}
We will show that this is impossible, arriving at a contradiction.

Note that $\Delta'_1=[b(\Delta_1),e(\widetilde{\Delta_2})]$, $\Delta'_2=[b(\widetilde{\Delta_1}),e(\Delta_2)]$ and
$\Delta'_1\cap\Delta'_2=[b(\Delta_1),e(\Delta_2)]=\Delta_1\cap\Delta_2$.
Let
\begin{gather*}
\text{$l_i$ (resp., $l_i'$) be the size of $\Delta_i$ (resp., $\Delta'_i$), $i=1,2$},\\
\text{$d=\deg\rho$ for any $\rho\in\Delta_1\cup\Delta_2$},\\
\text{$m=$ the size of $\Delta_1\cap\Delta_2=\Delta'_1\cap\Delta'_2$, $N=l_1+l_2=l'_1+l'_2$,}
\end{gather*}
Finally, denote by $\JacGLnm_{\min}$ the Jacquet functor with respect to the standard parabolic subgroup of type $(d,\dots,d)$ ($N$ times). Then
\[
\ell(\JacGLnm_{\min}(Z(\Delta_1+\Delta_2)))={N\choose l_1}-{N\choose m},\ \
\ell(\JacGLnm_{\min}(Z(\Delta'_1+\Delta'_2)))={N\choose l'_1}-{N\choose m}.
\]
Note that $l_1,l_2<l_1'$ (since $\Delta_1'\supsetneq\Delta_1,\widetilde{\Delta_2}$) and therefore
\[
\ell(\JacGLnm_{\min}(Z(\Delta'_1,\Delta'_2)))<\ell(\JacGLnm_{\min}(Z(\Delta_1+\Delta_2))).
\]
Denoting by $\JacG_{\min}$ the Jacquet functor with respect to the standard parabolic subgroup with Levi
part $\overbrace {\GL_d\times\dots\times\GL_d}^N\times G_{\deg\sigma}$.
It is easy to see by the geometric lemma that
$\ell(\JacG_{\min}(\pi\sdp\sigma))=2^N\ell(\JacGLnm_{\min}(\pi))$ for any $\pi\in\Reps(\GL_{dN})_{\MS{\Cusp\GL_d}}$.
Thus,
\begin{multline*}
\ell(\JacG_{\min}(Z(\Delta_1'+\Delta_2')\sdp\sigma))=2^N\ell(\JacGLnm_{\min}(Z(\Delta_1'+\Delta_2')))<\\
2^N\ell(\JacGLnm_{\min}(Z(\Delta_1+\Delta_2)))=\ell(\JacG_{\min}(Z(\Delta_1+\Delta_2)\sdp\sigma))
\end{multline*}
in contradiction to \eqref{eq: contra<=}.
Lemma \ref{lem: 2segred} follows.

This concludes the proof of Theorem \ref{thm: twosegments}.

\section{An irreducibility criterion} \label{sec: irred}

\subsection{}

In this subsection we reduce the question of irreducibility of $\pi\sdp\sigma$ where $\pi\in\Irr$ and $\sigma\in\CuspG$ to the case where
$\supp\pi\subset\rho[\Z]$ for some $\rho\in\CuspGL$ with $\tilde\rho\subset\rho[\Z]$, assuming knowledge of irreducibility of parabolic
induction for the general linear group.
For convenience and as a preparation for the subsequent subsections, we give a self-contained argument although some of the results are available
in greater generality in the literature.

\begin{lemma} \label{lem: badrhopos}
Let $\rho\in\CuspGL$ with $\tilde\rho\notin\rho[\Z]$ and let $\pi\in\IrrGL$ with $\supp\pi\subset\rho[\Z]$.
Assume that $\pi$ is \Zpstv.
Then $\pi\sdp\sigma$ is irreducible for any $\sigma\in\CuspG$.
\end{lemma}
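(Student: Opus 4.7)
The plan is to apply Corollary~\ref{cor: stdirredcrit}(2) with the positive multisegment $\m$ such that $\pi=Z(\m)$; as $\m_{=0}=0$, the auxiliary hypothesis on $\m_{=0}$ is vacuous. The first required embedding $\pi\sdp\sigma\hookrightarrow\std{\m}\sdp\sigma$ is immediate from $Z(\m)\hookrightarrow\std{\m}$, so the heart of the proof will be establishing the second embedding $(\pi\sdp\sigma)^\vee=Z(\m^\vee)\sdp\sigma^\vee\hookrightarrow\std{\m^\Galinv}\sdp\sigma^\vee$.

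The hypothesis $\tilde\rho\notin\rho[\Z]$ is used in two ways: it forces the disjointness $\rho[\Z]\cap\tilde\rho[\Z]=\emptyset$ (and analogously for the pair $\rho^\vee[\Z],\rho^\Galinv[\Z]$), and by Theorem~\ref{thm: cuspred}(3) it also forces $\rho[\Z]\cap\cuspred_\sigma=\emptyset$. The main computational input is the following support-disjointness claim: for any multisegment $\m'$ with $\supp\m'\subset\rho'[\Z]$ and $\tilde{\rho'}\notin\rho'[\Z]$, and any $\sigma'\in\CuspG$, the standard module $\std{\m'}\sdp\sigma'$ is \SI. By Lemma~\ref{lem: simpleSIcrit} it suffices to show that $Z(\m')\otimes\sigma'$ appears with multiplicity one in $\comultG(\std{\m'}\sdp\sigma')=\comod(\std{\m'})\sdp(\one\otimes\sigma')$, using \eqref{eq: Tadic formula} and supercuspidality. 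In a contributing term $\alpha\times\tilde\gamma\otimes\beta$ of $\comod(\std{\m'})$ coming from~\eqref{eq: comod}, the classical factor being $\sigma'$ forces $\beta=\one$; the condition $Z(\m')\le\alpha\times\tilde\gamma$ then forces $\gamma=\one$, since $\supp\tilde\gamma\subset\tilde{\rho'}[\Z]$ is disjoint from $\rho'[\Z]\supset\supp Z(\m')$ and $\alpha\times\tilde\gamma$ is irreducible whenever $\gamma\ne\one$ (its factors have disjoint supports). This isolates the unique contribution $\std{\m'}\otimes\one$ of multiplicity one.

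Applying this claim to the three pairs $(\m,\sigma)$, $(\m^\vee,\sigma^\vee)$, and $(\m^\Galinv,\sigma^\vee)$---each satisfies the support-disjointness hypothesis---shows that $\std{\m}\sdp\sigma$, $\std{\m^\vee}\sdp\sigma^\vee$, and $\std{\m^\Galinv}\sdp\sigma^\vee$ are all \SI. Their subrepresentations $\pi\sdp\sigma$ and $Z(\m^\vee)\sdp\sigma^\vee$ are then \SI, and Theorem~\ref{thm: Zelclass}(\ref{part: Zvee}) gives $\soc(\std{\m^\Galinv}\sdp\sigma^\vee)=Z(\m^\Galinv;\sigma^\vee)=Z(\m;\sigma)^\vee$. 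The remaining step---producing the embedding $Z(\m^\vee)\sdp\sigma^\vee\hookrightarrow\std{\m^\Galinv}\sdp\sigma^\vee$---reduces by \eqref{eq: indtildeinv} to upgrading the Jordan--H\"older equivalence between the \SI\ modules $Z(\m^\vee)\sdp\sigma^\vee$ and $Z(\m^\Galinv)\sdp\sigma^\vee$ to an actual isomorphism. I plan to do so via the long intertwining operator attached to the Weyl element $s_0\in G$ which acts as $\tilde{\phantom{x}}$ on the $\GL$-factor of the Levi; once the isomorphism is proved, composing with $Z(\m^\Galinv)\hookrightarrow\std{\m^\Galinv}$ supplies the required embedding and Corollary~\ref{cor: stdirredcrit} concludes. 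The main obstacle is precisely this step: showing that under the disjointness of $\rho^\vee[\Z]$ and $\rho^\Galinv[\Z]$ the intertwining operator is non-degenerate on the simple socles---here the fact that each factor $Z(\Delta_i^\vee)$ of $\std{\m^\vee}$ is unlinked with every $\tilde{Z(\Delta_j^\vee)}=Z(\Delta_j^\Galinv)$ should make the relevant local factor non-vanishing and so the operator a genuine isomorphism.
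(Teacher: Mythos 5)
Your framing via Corollary~\ref{cor: stdirredcrit}(2) is exactly right, and your support-disjointness argument (that $Z(\m')\otimes\sigma'$ has multiplicity one in $\comultG(\std{\m'}\sdp\sigma')$ because $\rho'[\Z]$ and $\widetilde{\rho'}[\Z]$ are disjoint) is correct — although for $\std{\m}\sdp\sigma=\std{\m;\sigma}$ and $\std{\m^\Galinv}\sdp\sigma^\vee=\std{\m^\Galinv;\sigma^\vee}$ the \SI\ property is already part of Theorem~\ref{thm: Zelclass}(1), so that argument is genuinely needed only for $\std{\m^\vee}\sdp\sigma^\vee$. The real gap, which you flag yourself as ``the main obstacle,'' is the one step that actually carries all the content: producing the embedding $Z(\m^\vee)\sdp\sigma^\vee\hookrightarrow\std{\m^\Galinv}\sdp\sigma^\vee$, equivalently the isomorphism $Z(\m^\vee)\sdp\sigma^\vee\simeq Z(\m^\Galinv)\sdp\sigma^\vee$. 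The proposed route via the long intertwining operator attached to the Weyl element acting as $\tilde{\phantom{x}}$ on the $\GL$-factor of the Levi is not a shortcut: establishing that such an operator is a non-degenerate isomorphism at a specific point is essentially the reducibility problem restated, and the assertion that unlinkedness of $Z(\Delta_i^\vee)$ with $Z(\Delta_j^\Galinv)$ forces non-vanishing would itself require a substantial argument you have not supplied. As written, the proposal is incomplete.

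The paper avoids this entirely with a short induction on $\deg\pi$. Write $\m=\Delta+\m'$ with $\expo(\Delta)$ maximal among segments of $\m$. The embedding $\pi\sdp\sigma\hookrightarrow\std{\m;\sigma}$ is immediate. For the contragredient one first observes $\pi^\vee\sdp\sigma^\vee\hookrightarrow Z(\m'^\vee)\times Z(\Delta^\vee)\sdp\sigma^\vee$. Now $Z(\Delta^\vee)\sdp\sigma^\vee$ is irreducible by Theorem~\ref{thm: oneseg}, since $\tilde\rho\notin\rho[\Z]$ together with Theorem~\ref{thm: cuspred}(3) forces $\rho[\Z]\cap\cuspred_\sigma=\emptyset$; hence by \eqref{eq: indtildeinv} it is isomorphic to $Z(\Delta^\Galinv)\sdp\sigma^\vee$. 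Also $Z(\m'^\vee)\times Z(\Delta^\Galinv)$ is irreducible because the supports lie in the disjoint lines $\rho^\vee[\Z]$ and $\rho^\Galinv[\Z]$, so the two $\GL$-factors commute. Applying the induction hypothesis to $Z(\m')$ gives $Z(\m'^\vee)\sdp\sigma^\vee\simeq Z(\m'^\Galinv)\sdp\sigma^\vee$, whence
\[
\pi^\vee\sdp\sigma^\vee\hookrightarrow Z(\Delta^\Galinv)\times Z(\m'^\Galinv)\sdp\sigma^\vee\hookrightarrow\std{\m^\Galinv;\sigma^\vee},
\]
and Corollary~\ref{cor: stdirredcrit} concludes. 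The crucial idea you are missing is that one never has to flip the whole multisegment across $\sigma$ at once: peel off and flip one segment at a time, and each single-segment flip is already covered by Theorem~\ref{thm: oneseg}. No intertwining operator is ever invoked.
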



\begin{proof}
We prove it by induction on $\deg\pi$. The case $\pi=\one$ is trivial.
For the induction step write $\pi=Z(\m)$ and $\m=\Delta+\m'$ where $\expo(\Delta)\ge\expo(\Delta')$
for any $\Delta'\le\m'$.
Since $\m$ is positive, $\pi\sdp\sigma\hookrightarrow\std{\m;\sigma}$.
On the other hand, by the assumption on $\rho$ and $\pi$, $Z(\Delta^\vee)\sdp\sigma^\vee$
and $Z(\m'^\vee)\times Z(\Delta^\Galinv)$ are irreducible.
Thus, by the induction hypothesis
\begin{multline*}
\pi^\vee\sdp\sigma^\vee\hookrightarrow Z(\m'^\vee)\times Z(\Delta^\vee)\sdp\sigma^\vee\simeq
Z(\m'^\vee)\times Z(\Delta^\Galinv)\sdp\sigma^\vee\simeq\\
Z(\Delta^\Galinv)\times Z(\m'^\vee)\sdp\sigma^\vee\simeq Z(\Delta^\Galinv)\times Z(\m'^\Galinv)\sdp\sigma^\vee=
\std{\m^\Galinv;\sigma^\vee}
\end{multline*}
and the lemma follows from Corollary \ref{cor: stdirredcrit}.
\end{proof}

\begin{lemma} \label{lem: nonselfdual}
Let $\rho\in\CuspGL$ with $\tilde\rho\notin\rho[\Z]$ and let $\pi\in\IrrGL$ with $\supp\pi\subset\rho[\Z]$.
Then $\pi\sdp\sigma$ is irreducible for any $\sigma\in\CuspG$.
\end{lemma}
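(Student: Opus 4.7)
I would apply Corollary \ref{cor: stdirredcrit}(2) to $\pi \sdp \sigma$ with the auxiliary non-negative multisegment $\pstv\m = \m_{\geq 0} + \widetilde{\m_{<0}}$, where we write $\pi = Z(\m)$ and $\m = \m_{>0} + \m_{=0} + \m_{<0}$. The hypothesis $\tilde\rho \notin \rho[\Z]$ combined with Theorem \ref{thm: cuspred}(3) forces $\rho[\Z] \cap \cuspred_\sigma = \emptyset$; in particular $\rho' \sdp \sigma$ is irreducible for every $\rho' \in \supp \m_{=0}$, so the auxiliary hypothesis of the corollary is satisfied. It remains to produce the two embeddings $\pi \sdp \sigma \hookrightarrow \std{\pstv\m; \sigma}$ and $\pi^\vee \sdp \sigma^\vee \hookrightarrow \std{(\pstv\m)^\Galinv; \sigma^\vee}$.

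For the first embedding, Lemma \ref{lem: badrhopos} applies to $Z(\widetilde{\m_{<0}}) \sdp \sigma$: the multisegment $\widetilde{\m_{<0}}$ is strictly positive, supported in $\tilde\rho[\Z]$, and $\widetilde{\tilde\rho} = \rho \notin \tilde\rho[\Z]$. This yields irreducibility, whence $Z(\m_{<0}) \sdp \sigma \simeq Z(\widetilde{\m_{<0}}) \sdp \sigma$ as representations, by \eqref{eq: indtildeinv} and a comparison of Jordan--H\"older lengths. Since segments in $\m_{\geq 0}$ have larger exponent than those in $\m_{<0}$, we have $\std{\m} = \std{\m_{\geq 0}} \times \std{\m_{<0}}$, and since $Z(\m_{\geq 0}) \times Z(\m_{<0})$ is a non-zero subrepresentation of $\std{\m}$, a socle argument gives $Z(\m) \hookrightarrow Z(\m_{\geq 0}) \times Z(\m_{<0})$. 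Combining,
\[
\pi \sdp \sigma \hookrightarrow Z(\m_{\geq 0}) \sdp (Z(\m_{<0}) \sdp \sigma) \simeq Z(\m_{\geq 0}) \times Z(\widetilde{\m_{<0}}) \sdp \sigma,
\]
and $Z(\m_{\geq 0}) \times Z(\widetilde{\m_{<0}})$ is irreducible since its factors have supports on the disjoint cuspidal lines $\rho[\Z]$ and $\tilde\rho[\Z]$; it equals $Z(\pstv\m) \hookrightarrow \std{\pstv\m}$, giving $\pi \sdp \sigma \hookrightarrow \std{\pstv\m; \sigma}$.

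The dual embedding is obtained analogously starting from $\pi^\vee = Z(\m^\vee)$ and $\sigma^\vee$: applying Lemma \ref{lem: badrhopos} on the cuspidal line $\rho^\Galinv[\Z]$ to $Z(\m_{>0}^\Galinv) \sdp \sigma^\vee$ (note $\widetilde{\rho^\Galinv} = \rho^\vee \notin \rho^\Galinv[\Z]$) and tracking through the same steps yields $\pi^\vee \sdp \sigma^\vee \hookrightarrow \std{\pstv{\m^\vee}} \sdp \sigma^\vee$, where $\pstv{\m^\vee} = \m_{\leq 0}^\vee + \m_{>0}^\Galinv$. This differs from $(\pstv\m)^\Galinv = \m_{>0}^\Galinv + \m_{=0}^\Galinv + \m_{<0}^\vee$ only in the zero-exponent block ($\m_{=0}^\vee$ versus $\m_{=0}^\Galinv$). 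Factoring off that block (which sits last in the standard module by the Zelevinsky ordering), the identification reduces to $\std{\m_{=0}^\vee} \sdp \sigma^\vee \simeq \std{\m_{=0}^\Galinv} \sdp \sigma^\vee$: both are irreducible by Corollary \ref{cor: tempcase} (their supports lie on cuspidal lines disjoint from $\cuspred_{\sigma^\vee}$) and share the same Jordan--H\"older series by \eqref{eq: indtildeinv} since $\widetilde{\m_{=0}^\vee} = \m_{=0}^\Galinv$, hence are isomorphic.

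The main subtlety is precisely this final bridge between $\m_{=0}^\vee$ and $\m_{=0}^\Galinv$, which live on different cuspidal lines; the identification rests on the tempered irreducibility from Corollary \ref{cor: tempcase}, which itself hinges on $\rho[\Z] \cap \cuspred_\sigma = \emptyset$. With both embeddings in place, Corollary \ref{cor: stdirredcrit}(2) delivers the irreducibility of $\pi \sdp \sigma$.
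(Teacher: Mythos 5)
Your proof is correct and follows essentially the same route as the paper's: decompose $\m$ by exponent sign, use Lemma \ref{lem: badrhopos} to swap the negative block for its $\tilde{}$-dual positive block (exploiting irreducibility), commute factors supported on the disjoint lines $\rho[\Z]$ and $\tilde\rho[\Z]$, and feed both embeddings into Corollary \ref{cor: stdirredcrit}. The only cosmetic difference is that the paper splits $\pi$ into three blocks $\pi_+\times Z(\m_{=0})\times\pi_-$ while you split into $Z(\m_{\ge0})\times Z(\m_{<0})$; your explicit treatment of the $(\m_{=0})^\vee$ versus $(\m_{=0})^\Galinv$ discrepancy in the dual embedding (via Corollary \ref{cor: tempcase}) spells out a step the paper covers with ``in a similar vein,'' and it is handled correctly.
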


\begin{proof}
Write $\pi=Z(\m)$. Clearly, $Z(\m_{=0})\sdp\sigma$ is irreducible.
The condition on $\rho$ implies that $\pi_1\times\pi_2$ is irreducible for any $\pi_1,\pi_2\in\IrrGL$
such that $\supp\pi_1\subset\rho[\Z]$ and $\supp\pi_2\subset\tilde\rho[\Z]$. In particular,
$\pi_+\times\tilde\pi_+$ and $\tilde\pi_+\times Z(\m_{=0})$ are irreducible. (Recall that by our convention
$\tilde\pi_+$ means $(\tilde\pi)_+$.) Thus, by Lemma \ref{lem: badrhopos}
\begin{multline*}
\pi\sdp\sigma\hookrightarrow\pi_+\times Z(\m_{=0})\times\pi_-\sdp\sigma\simeq
\pi_+\times Z(\m_{=0})\times\tilde\pi_+\sdp\sigma\simeq\\
\pi_+\times\tilde\pi_+\times Z(\m_{=0})\sdp\sigma=
Z(\m_{>0}+\tilde\m_{>0})\times Z(\m_{=0})\sdp\sigma\hookrightarrow\std{\m;\sigma}.
\end{multline*}
In a similar vein,
\[
\pi^\vee\sdp\sigma^\vee\hookrightarrow\std{\m^\Galinv;\sigma}
\]
and the lemma follows from Corollary \ref{cor: stdirredcrit}.
\end{proof}

\begin{remark}
Lemma \ref{lem: nonselfdual} is also a consequence of \cite[Proposition 3.2]{MR2504024}.
\end{remark}

\begin{lemma} \label{lem: redtok=2}
Let $\pi_i\in\IrrGL$, $i=1,2$ and $\sigma\in\IrrctmpG$. Assume that for any $\rho_i\in\supp\pi_i$, $i=1,2$ we have
$\rho_1,\tilde\rho_1\notin\rho_2[\Z]$. Then $\pi_1\times\pi_2\sdp\sigma$ is irreducible if and only if
$\pi_i\sdp\sigma$ is irreducible, $i=1,2$.
\end{lemma}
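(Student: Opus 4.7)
The plan is to apply the decomposition result of \cite{MR1481814} quoted in the Remark following Corollary \ref{cor: redAAvee}, which converts the problem into a length calculation in a tensor product of Grothendieck groups.

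First I would set $A=\bigcup_{\rho\in\supp\pi_2}(\rho[\Z]\cup\tilde\rho[\Z])$ and $B=\CuspGL\setminus A$, so that $A$ is automatically $\tilde{}$- and $\rshft{}$-invariant and $\supp\pi_2\subset A$. The hypothesis that $\rho_1,\tilde\rho_1\notin\rho_2[\Z]$ for all $\rho_i\in\supp\pi_i$ is then exactly equivalent to $\supp\pi_1\subset B$. Consequently $\pi_1\in(\IrrGL)_{\red{A}}$ and $\pi_2\in(\IrrGL)_{\red{B}}$; moreover, since no segment of $\pi_1$'s multisegment can be linked with any segment of $\pi_2$'s (they live on disjoint cuspidal lines), $\pi_1\times\pi_2=\pi_2\times\pi_1$ is irreducible and corresponds to $\pi_1\otimes\pi_2$ under the ring isomorphism $\GrGL\simeq\GrGL_{\red{A}}\otimes\GrGL_{\red{B}}$ of Remark \ref{rem: tensorprodGL}.

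Invoking the main result of \cite{MR1481814} recalled in the Remark, the map $\pi\mapsto(D_A(\pi),D_B(\pi))$ induces a $\GrGL$-module isomorphism $\GrG\simeq\GrG_{\red{A}}\otimes\GrG_{\red{B}}$. Writing $\sigma_A=D_A(\sigma)$, $\sigma_B=D_B(\sigma)$ (both irreducible), the class $[\sigma]$ corresponds to the simple tensor $[\sigma_A]\otimes[\sigma_B]$. Since $\pi_1$ acts only on the first factor and $\pi_2$ only on the second under the tensor module structure, the images in $\GrG_{\red{A}}\otimes\GrG_{\red{B}}$ are
\[
[\pi_1\sdp\sigma]=[\pi_1\sdp\sigma_A]\otimes[\sigma_B],\qquad [\pi_2\sdp\sigma]=[\sigma_A]\otimes[\pi_2\sdp\sigma_B],
\]
\[
[(\pi_1\times\pi_2)\sdp\sigma]=[\pi_1\sdp\sigma_A]\otimes[\pi_2\sdp\sigma_B].
\]

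Since length is multiplicative on tensor products of Grothendieck groups (simples tensor simples give simples) and $\sigma_A,\sigma_B$ are simple, this yields
\[
\ell((\pi_1\times\pi_2)\sdp\sigma)=\ell(\pi_1\sdp\sigma)\cdot\ell(\pi_2\sdp\sigma),
\]
from which both directions of the equivalence follow at once. The only substantive step is really the initial setup, namely recognizing that the hypothesis forces $\pi_1$ and $\pi_2$ into complementary blocks of $\RepsG$; after that the argument is a direct length count and does not actually use cotemperedness of $\sigma$.
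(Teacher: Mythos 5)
Your proof is correct, but it takes a genuinely different route from the paper's own. The paper proves the nontrivial direction directly by an embedding argument: assuming $\pi_i\sdp\sigma$ irreducible and exploiting that the two cuspidal lines are unlinked, one commutes factors to produce embeddings $\pi_1\times\pi_2\sdp\sigma\hookrightarrow\std{\m;\sigma}$ and $\pi^\vee\sdp\sigma^\vee\hookrightarrow\std{\m^\Galinv;\sigma^\vee}$, then concludes via Corollary \ref{cor: stdirredcrit}; this is precisely where the cotemperedness of $\sigma$ enters. Your argument instead invokes Jantzen's decomposition $\GrG\simeq\GrG_{\red{A}}\otimes\GrG_{\red{A^c}}$ as a $\GrGL$-module (recalled in the remark after Corollary \ref{cor: redAAvee}) and reduces the claim to multiplicativity of length across the tensor factors. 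As you observe, this yields the statement for arbitrary $\sigma\in\IrrG$, not just cotempered $\sigma$ --- and indeed, the authors record exactly this extension in the remark immediately following the lemma, noting that the general case ``easily follows from \cite[Theorem 10.5]{MR1481814}.'' So the trade-off is clear: the paper's route is elementary and internal to the section's machinery, while yours is cleaner and strictly more general but imports Jantzen's nontrivial theorem as a black box.
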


\begin{proof}
The condition is clearly necessary (for any $\sigma\in\IrrG$). Suppose that it is satisfied.
Write $\pi_i=Z(\m_i)$, $i=1,2$ so that $\pi=Z(\m)$ where $\m=\m_1+\m_2$.
Then
\begin{multline*}
\pi\sdp\sigma=\pi_1\times\pi_2\sdp\sigma\hookrightarrow
\pi_1\times Z(\pstv{\m_2})\sdp\sigma=Z(\pstv{\m_2})\times\pi_1\sdp\sigma\hookrightarrow\\ Z(\pstv{\m_2})\times Z(\pstv{\m_1})\sdp\sigma=
Z(\pstv{\m_2}+\pstv{\m_1})\sdp\sigma\hookrightarrow\std{\m;\sigma}.
\end{multline*}
Similarly,
\[
\pi^\vee\sdp\sigma^\vee\hookrightarrow\std{\m^\Galinv;\sigma^\vee},
\]
and the lemma follows from Corollary \ref{cor: stdirredcrit}.
\end{proof}

\begin{remark}
In fact Lemma \ref{lem: redtok=2} holds for any $\sigma\in\IrrG$.
This easily follows from \cite[Theorem 10.5]{MR1481814}.
\end{remark}

Recall that any $\pi\in\IrrGL$ can be written uniquely (up to permutation) as
\begin{equation} \label{eq: cusplinedecomp}
\pi=\pi_1\times\dots\times\pi_k
\end{equation}
where for all $i$, $\pi_i\ne\one$ and there exists $\rho_i\in\CuspGL$ such that $\supp\pi_i\subset\rho_i[\Z]$
and $\rho_j\notin\rho_i[\Z]$ for all $i\ne j$.
\begin{proposition}
Let $\pi\in\IrrGL$ and $\sigma\in\CuspG$. Write $\pi=\pi_1\times\dots\times\pi_k$ as in \eqref{eq: cusplinedecomp}.
Then $\pi\sdp\sigma$ is irreducible if and only if $\pi_i\sdp\sigma$ is irreducible for all $i$ such that $\tilde\rho_i\in\rho_i[\Z]$
and $\pi_i\times\tilde\pi_j$ is irreducible for all $i\ne j$ such that $\tilde\rho_j\in\rho_i[\Z]$.
\end{proposition}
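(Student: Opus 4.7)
The strategy is to group the factors $\pi_i$ according to their cuspidal lines and reduce to analyzing one group at a time using Lemma~\ref{lem: redtok=2}. Introduce an equivalence relation on $\{1,\dots,k\}$ by declaring $i\sim j$ iff $\rho_i[\Z]\in\{\rho_j[\Z],\tilde\rho_j[\Z]\}$. Because the lines $\rho_1[\Z],\dots,\rho_k[\Z]$ are pairwise distinct, each equivalence class has at most two elements, and classes come in three types: \emph{self-dual} singletons $\{i\}$ with $\tilde\rho_i\in\rho_i[\Z]$; \emph{isolated} singletons $\{i\}$ with $\tilde\rho_i\notin\bigcup_j\rho_j[\Z]$; and pairs $\{i,j\}$ characterized by $\tilde\rho_j\in\rho_i[\Z]$. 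For each class $C$, the partial product $\pi_C:=\prod_{i\in C}\pi_i$ is again irreducible, since within $C$ the factors still live on pairwise distinct cuspidal lines.

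For any two distinct classes $C$ and $C'$, the pair $(\pi_C,\pi_{C'})$ satisfies the hypothesis of Lemma~\ref{lem: redtok=2}: if $\rho\in\supp\pi_C$ and $\rho'\in\supp\pi_{C'}$ with $\rho\in\rho_i[\Z]$ and $\rho'\in\rho_j[\Z]$ for some $i\in C$, $j\in C'$, then the definition of $\sim$ forces both $\rho[\Z]=\rho_i[\Z]$ and $\tilde\rho[\Z]=\tilde\rho_i[\Z]$ to differ from $\rho_j[\Z]=\rho'[\Z]$. Since $\sigma\in\CuspG\subset\IrrctmpG$, iterating Lemma~\ref{lem: redtok=2} over the classes yields
\[
\pi\sdp\sigma\text{ is irreducible}\iff\pi_C\sdp\sigma\text{ is irreducible for every equivalence class }C.
\]

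It remains to analyze each class. An isolated singleton automatically produces an irreducible $\pi_i\sdp\sigma$ by Lemma~\ref{lem: nonselfdual}, contributing no condition. A self-dual singleton $\{i\}$ produces precisely the condition that $\pi_i\sdp\sigma$ be irreducible, accounting for the first part of the criterion. For a pair $\{i,j\}$ with $\tilde\rho_j\in\rho_i[\Z]$, associativity of parabolic induction combined with \eqref{eq: indtildeinv} applied to $\pi_j$ gives the Grothendieck-class identity
\[
[(\pi_i\times\pi_j)\sdp\sigma]=[\pi_i\sdp(\pi_j\sdp\sigma)]=[\pi_i\sdp(\tilde\pi_j\sdp\sigma)]=[(\pi_i\times\tilde\pi_j)\sdp\sigma].
\]
Both factors of $\pi_i\times\tilde\pi_j$ now have supports on the single line $\rho_i[\Z]$. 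For any $\tau\in\JH(\pi_i\times\tilde\pi_j)$ one has $\supp\tau\subset\rho_i[\Z]$ while $\supp\tilde\tau\subset\tilde\rho_i[\Z]=\rho_j[\Z]\ne\rho_i[\Z]$, so Lemma~\ref{lem: nonselfdual} guarantees that $\tau\sdp\sigma$ is irreducible. Summing yields
\[
\ell((\pi_i\times\pi_j)\sdp\sigma)=\ell((\pi_i\times\tilde\pi_j)\sdp\sigma)=\ell(\pi_i\times\tilde\pi_j),
\]
so $(\pi_i\times\pi_j)\sdp\sigma$ is irreducible iff $\pi_i\times\tilde\pi_j$ is, matching the second half of the criterion.

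The delicate point is the pair case: $(\pi_i\times\pi_j)\sdp\sigma$ and $(\pi_i\times\tilde\pi_j)\sdp\sigma$ need not be isomorphic as representations, only equal in the Grothendieck group. What makes the length comparison work is that each JH summand of $\pi_i\times\tilde\pi_j$ separately induces irreducibly via Lemma~\ref{lem: nonselfdual}, a point that uses both $\tilde\rho_j\in\rho_i[\Z]$ and the disjointness of $\rho_i[\Z]$ and $\rho_j[\Z]$. The remaining verifications are bookkeeping checks that Lemma~\ref{lem: redtok=2} applies across distinct equivalence classes.
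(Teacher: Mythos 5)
Your proof is correct and follows essentially the same route as the paper's: group the $\pi_i$ by cuspidal line, apply Lemma~\ref{lem: redtok=2} to reduce to the case of a single line (equivalently $k\le 2$ with $\tilde\rho_2\in\rho_1[\Z]$), and then invoke Lemma~\ref{lem: nonselfdual} together with the Grothendieck-group equality coming from \eqref{eq: indtildeinv}. You make the reduction and the length bookkeeping more explicit than the paper does, but the decomposition and the two key lemmas are the same.
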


\begin{proof}
By Lemma \ref{lem: redtok=2} we reduces the statement to the case where $k=2$ and $\tilde\rho_2\in\rho_1[\Z]$.
This case follows from Lemma \ref{lem: nonselfdual} since the irreducibility of $\pi\sdp\sigma=\pi_1\times\pi_2\sdp\sigma$ is equivalent to the irreducibility of
$\pi_1\times\tilde\pi_2\sdp\sigma$.
\end{proof}



\subsection{}
\begin{lemma} \label{lem: SI}
Assume that $\theta\in\IrrctmpG$ and $\pi\le\std{\m}$ where $\m$ is a positive multisegment.
Then
\[
\comultG_{\data_\m;*}(\pi\sdp\theta)=\pi\otimes\theta.
\]
Consequently, if $\pi$ is \SI\ then $\pi\sdp\theta$ is \SI\ and $\soc(\pi\sdp\theta)=\soc(\soc(\pi)\sdp\theta)$.
\end{lemma}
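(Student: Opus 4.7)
The plan is to expand $\comultG(\pi \sdp \theta)$ using Tadi\'c's formula \eqref{eq: Tadic formula} and then single out the component with $\GL$-cuspidal data $\data_\m$ via a matching argument. Writing $\comults(\pi) = \sum_i \alpha_i \otimes \beta_i \otimes \gamma_i$ and $\comultG(\theta) = \sum_j \delta_j \otimes \theta_j$, formulas \eqref{eq: Tadic formula} and \eqref{eq: comod} give
\[
\comultG(\pi \sdp \theta) = \sum_{i,j} (\alpha_i \times \tilde\gamma_i \times \delta_j) \otimes (\beta_i \sdp \theta_j).
\]
Since $\pi \le \std{\m}$ forces $\data_\pi = \data_\m = \data_{\alpha_i} + \data_{\beta_i} + \data_{\gamma_i}$, a summand will contribute to $\comultG_{\data_\m;*}(\pi \sdp \theta)$ exactly when $\tilde\data_{\gamma_i} + \data_{\delta_j} = \data_{\beta_i} + \data_{\gamma_i}$.

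Next I introduce the weight $e(X) := \sum_{\rho \in X} \deg(\rho)\expo(\rho)$. Since $e(\tilde X) = -e(X)$, the matching equation yields $e(\data_{\delta_j}) = e(\data_{\beta_i}) + 2e(\data_{\gamma_i})$. Cotemperedness of $\theta$ provides $e(\data_{\delta_j}) = \deg(\delta_j)\expo(\delta_j) \le 0$. On the other hand, $\pi \le \std{\m}$ gives $\alpha_i \otimes \beta_i \otimes \gamma_i \le \comults(\std{\m})$, and iterating \eqref{eq: Jacseg} over each factor $Z(\Delta_l)$ of $\std{\m}$ produces a contiguous tripartition $\Delta_l = \Delta_l^1 + \Delta_l^2 + \Delta_l^3$ of every segment of $\m$ with $\data_{\beta_i} = \sum_l \data_{\Delta_l^2}$ and $\data_{\gamma_i} = \sum_l \data_{\Delta_l^3}$.

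The crux — and the only place where positivity of $\m$ is used — is the algebraic identity
\[
s_l\,\expo(\Delta_l^2) + 2 t_l\,\expo(\Delta_l^3) = (s_l + 2t_l)\expo(\Delta_l) + \tfrac{1}{2}\bigl((s_l + 2t_l) a_l + s_l t_l\bigr),
\]
where $a_l, s_l, t_l$ are the lengths of $\Delta_l^1, \Delta_l^2, \Delta_l^3$. This follows by writing $\expo(\Delta_l^2), \expo(\Delta_l^3), \expo(\Delta_l)$ in terms of the starting cuspidal of $\Delta_l$. Because $\expo(\Delta_l) > 0$ and $a_l, s_l, t_l \ge 0$, the right-hand side is non-negative and vanishes only in the trivial case $s_l = t_l = 0$. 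Multiplying by $\deg(\rho)$ for $\rho \in \Delta_l$ and summing over $l$ produces $e(\data_{\beta_i}) + 2 e(\data_{\gamma_i}) \ge 0$, with equality iff every tripartition is trivial. Combined with the cotempered bound this forces $\beta_i = \gamma_i = \one$, hence $\alpha_i = \pi$, and in turn $\data_{\delta_j} = \emptyset$, so $\delta_j = \one$ and $\theta_j = \theta$. Multiplicity one is clear since $\pi \otimes \one \otimes \one$ and $\one \otimes \theta$ each appear once in $\comults(\pi)$ and $\comultG(\theta)$ respectively, proving $\comultG_{\data_\m;*}(\pi \sdp \theta) = \pi \otimes \theta$.

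For the consequence, if $\pi$ is \SI\ then $\soc(\pi) \otimes \theta$ appears with multiplicity one in $\pi \otimes \theta = \comultG_{\data_\m;*}(\pi \sdp \theta)$; since it has $\GL$-cuspidal data $\data_\m$, it contributes to no other Bernstein component of $\comultG(\pi \sdp \theta)$, so its total multiplicity is one. Lemma \ref{lem: simpleSIcrit} then delivers both the \SI-ness of $\pi \sdp \theta$ and the formula $\soc(\pi \sdp \theta) = \soc(\soc(\pi) \sdp \theta)$. The hard part will be verifying the positivity identity cleanly; everything else is bookkeeping with Tadi\'c's formula and the Bernstein decomposition.
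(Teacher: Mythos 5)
Your argument is correct and rests on the same idea as the paper's: positivity of $\m$ plus cotemperedness of $\theta$ force a strict exponent inequality on the $\GL$-side that singles out $\pi\otimes\theta$. The paper packages this differently. It introduces $\comodrev := \comod - \id\otimes\one$ and shows---via multiplicativity of $\comod$ (reducing to a single segment) together with formula \eqref{eq: comodseg}---that every irreducible $\alpha\otimes\beta\le\comodrev(\pi)$ has $\expo(\alpha)<\expo(\pi)$; combining this with $\expo(\delta)\le 0$ for any constituent $\delta\otimes\theta'\le\comultG(\theta)$, it follows that every constituent of $\comultG(\pi\sdp\theta)-\pi\otimes\theta$ has $\GL$-exponent strictly less than $\expo(\pi)$, hence cuspidal data $\ne\data_\m$. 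Your proof unpacks $\comod$ into $\comults$ and replaces the exponent bound by an explicit matching of cuspidal data, reducing to the algebraic identity for contiguous tripartitions of segments; this identity is precisely the segment-by-segment content of the paper's inequality $\expo(\alpha)<\expo(\pi)$, and what multiplicativity of $\comod$ buys the paper is that it need only verify the one-segment case without the bookkeeping over $l$. Your derivation of the \SI\ consequence via Lemma~\ref{lem: simpleSIcrit} is identical to the paper's.
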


\begin{proof}
Define
\[
\comodrev:\GrGL\rightarrow\GrGL\otimes\GrGL
\]
by
\[
\comodrev=\comod-\id\otimes\one.
\]
Clearly $\comodrev(\tau)\ge0$ for all $\tau\in\RepsGL$.
We claim that for any irreducible $\alpha\otimes\beta\le \comodrev(\pi)$
we have $\expo(\alpha)<\expo(\pi)$. In fact, this is true for any irreducible $\alpha\otimes\beta\le \comod_{\ex}(\std{\m})$.
Indeed, by the multiplicativity of $\comod$ and the positivity of $\m$, it is enough to consider the case where $\m=\Delta$ with $\expo(\Delta)>0$.
We use formula \eqref{eq: comodseg}. For any $\Delta\le\m$ and $\rho,\rho'\in\lextend\Delta$ with $b(\Delta)\ne\rho\le\rho'$ we have
\[
\expo(Z([b(\Delta),\rho]))<\expo(\Delta)\text{ and }\expo(\tilde Z([\rshft{\rho'},e(\Delta)]))=
-\expo([\rshft{\rho'},e(\Delta)])\le 0<\expo(\Delta).
\]
Hence
\[
\expo(Z([b(\Delta),\rho])\times\tilde Z([\rshft{\rho'},e(\Delta)]))<\expo(\Delta).
\]
Our claim follows.

On the other hand, since $\theta$ is cotempered, for any irreducible $\alpha\otimes\beta\le\comultG(\theta)$ we have $\expo(\alpha)\le0$.
Since
\[
\comultG(\pi\sdp\theta)-\pi\otimes\theta=\comod(\pi)\sdp\comultG(\theta)-\pi\otimes\theta=
\comod_{\ex}(\pi)\sdp\comultG(\theta)+(\pi\otimes\one)\sdp(\comultG(\theta)-\one\otimes\theta)
\]
it follows that for any irreducible $\alpha\otimes\beta\le\comultG(\pi\sdp\theta)-\pi\otimes\theta$ we have $\expo(\alpha)<\expo(\pi)$.
This clearly implies the first part of the lemma. The second part follows from lemma \ref{lem: simpleSIcrit}.
\end{proof}

\begin{lemma} \label{lem: posremains}
Suppose that $\pi_1,\pi_2\in\IrrGL$ are \Zpstv\ and $\pi\in\IrrGL$ is such that $\pi\le\pi_1\times\pi_2$.
Then $\pi$ is \Zpstv.
\end{lemma}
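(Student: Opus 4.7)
The plan is to locate $\pi$ among the composition factors of a standard module built from $\m_1+\m_2$, and then to use \eqref{eq: expoge} to argue that positivity propagates under the elementary operations generating the Zelevinsky partial order.

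Writing $\pi_i = Z(\m_i)$ with $\m_i \in \MS{\Seg_{>0}}$ and $\pi = Z(\m)$, biexactness of $\times$ together with the embeddings $Z(\m_i) \hookrightarrow \std{\m_i}$ yields
\[
\pi_1 \times \pi_2 \hookrightarrow \std{\m_1} \times \std{\m_2}.
\]
The right-hand side has the same Jordan--H\"older content as $\std{\m_1+\m_2}$, since reordering the factors does not change composition series (recall that $\GrGL$ is commutative). Hence $\pi$ is a composition factor of $\std{\m_1+\m_2}$, and by the Zelevinsky classification this means $\m$ can be obtained from $\m_1+\m_2$ by a finite sequence of elementary moves, each of which selects a linked pair $\Delta \prec \Delta'$ inside the current multisegment and replaces $\Delta + \Delta'$ by $(\Delta \cup \Delta') + (\Delta \cap \Delta')$ (the second summand being dropped when it is empty).

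By \eqref{eq: expoge}, each such elementary move preserves positivity: assuming $\Delta \prec \Delta'$ with $\expo(\Delta), \expo(\Delta') > 0$, we have $\expo(\Delta \cup \Delta') > \expo(\Delta) > 0$, and, whenever $\Delta \cap \Delta' \ne \emptyset$, also $\expo(\Delta \cap \Delta') > \expo(\Delta) > 0$. Iterating from the positive multisegment $\m_1 + \m_2$, every intermediate multisegment---in particular $\m$---remains positive, so $\pi = Z(\m)$ is \Zpstv.

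The main obstacle, such as it is, lies in the appeal to the Zelevinsky characterization of constituents of a standard module via the Zelevinsky partial order. This is classical and I would invoke it as a black box; a more self-contained alternative would be to induct on $\deg \pi$, using the partial derivatives $L_\rho$ of \S\ref{sec: derivatives} to peel off cuspidal factors from the left and right of $\pi_1 \times \pi_2$, but that route would be substantially more laborious than the argument sketched above.
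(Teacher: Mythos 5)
Your proof is correct and follows essentially the same route as the paper: the paper likewise passes to the multisegment level, notes that $\m$ is $\le \m_1+\m_2$ in Zelevinsky's partial order, and invokes \eqref{eq: expoge} to see that positivity persists downward in that order. You have merely unwound the definition of the partial order into its generating elementary moves, which is the same observation made explicit.
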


\begin{proof}
Let $\pi_i=Z(\m_i)$, $i=1,2$ where $\m_1$ and $\m_2$ are positive multisegments.
Then $\pi=Z(\m)$ where $\m$ is smaller than or equal to $\m_1+\m_2$ with respect to the partial order on multisegments introduced by Zelevinsky in \cite[\S7]{MR584084}.
It remains to observe that if $\n$ is positive then the same is true for any multisegments less than or equal to it. This follows
immediately from \eqref{eq: expoge} and the definition of the partial order on multisegments.
\end{proof}

\begin{lemma} \label{lem: SIstages}
Let $\m$ be a multisegment, $\pi=Z(\m)$ and $\sigma\in\IrrctmpG$.
Suppose that $Z(\m_{\le0})\sdp\sigma$ is irreducible and $\pi_+\times\tilde\pi_+$ is \SI.
Write $\soc(\pi_+\times\tilde\pi_+)=Z(\n)$.
Then $\pi\sdp\sigma$ is \SI\ and $\soc(\pi\sdp\sigma)=Z(\n;Z(\m_{=0})\sdp\sigma)$.
\end{lemma}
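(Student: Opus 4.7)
The plan is to embed $\pi\sdp\sigma$ into a representation that Lemma \ref{lem: SI} certifies to be \SI\ with socle $Z(\n;\theta)$, and then invoke \eqref{eq: obvious}. The target is $\Pi:=\pi_+\times\tilde\pi_+\sdp\theta$, where I set $\theta:=Z(\m_{=0})\sdp\sigma$. First I would verify $\theta\in\IrrctmpG$: since $\sigma\in\CuspG\subset\IrrctmpG$, Corollary \ref{cor: Tadanalog} applied to $\m_{\le0}$ and $\sigma$ converts the hypothesis that $Z(\m_{\le0})\sdp\sigma$ is irreducible into irreducibility of $Z(\m_{=0})\sdp\sigma$, after which Corollary \ref{cor: tempcase} supplies cotemperedness.

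Next, by Lemma \ref{lem: posremains} the multisegment $\n$ satisfying $Z(\n)=\soc(\pi_+\times\tilde\pi_+)$ is positive, so Theorem \ref{thm: Zelclass} gives that $\std{\n}\sdp\theta$ is \SI\ with socle $Z(\n;\theta)$. Applying Lemma \ref{lem: SI} to the \SI\ representation $\pi_+\times\tilde\pi_+\le\std{\m_{>0}+\tilde\m_{<0}}$ and to $\theta$, we learn that $\Pi$ is \SI\ with socle $\soc(Z(\n)\sdp\theta)$; the inclusion $Z(\n)\sdp\theta\hookrightarrow\std{\n}\sdp\theta$ together with \eqref{eq: obvious} identifies this with $Z(\n;\theta)$.

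It remains to construct the embedding $\pi\sdp\sigma\hookrightarrow\Pi$, which I would build in two stages. \emph{Stage 1:} factor $\std{\m}=\std{\m_{>0}}\times\std{\m_{\le0}}$ (with $\m_{>0}$ on the left since it carries higher exponents); then the subrepresentation $\pi_+\times Z(\m_{\le0})$ of the \SI\ $\std{\m}$ inherits its socle $\pi$ by \eqref{eq: obvious}, so $\pi\hookrightarrow\pi_+\times Z(\m_{\le0})$, giving $\pi\sdp\sigma\hookrightarrow\pi_+\sdp(Z(\m_{\le0})\sdp\sigma)$. \emph{Stage 2:} by Corollary \ref{cor: Tadanalog} the irreducible $Z(\m_{\le0})\sdp\sigma$ equals $Z(\m_{\le0};\sigma)$; unpacking $\std{\m_{\le0};\sigma}=\std{\pstv{\m_{\le0}}}\sdp\sigma$ and applying Corollary \ref{cor: tempcase} to rewrite $\std{\m_{=0}}\sdp\sigma=\theta$ identifies this with $Z(\tilde\m_{<0};\theta)=\soc(\std{\tilde\m_{<0}}\sdp\theta)$. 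On the other hand, Lemma \ref{lem: SI} combined with \eqref{eq: obvious} (using $\tilde\pi_+=Z(\tilde\m_{<0})\le\std{\tilde\m_{<0}}$) shows that the same $Z(\tilde\m_{<0};\theta)$ is $\soc(\tilde\pi_+\sdp\theta)$; hence $Z(\m_{\le0})\sdp\sigma\hookrightarrow\tilde\pi_+\sdp\theta$. Composing with $\pi_+\sdp(-)$ produces the desired $\pi\sdp\sigma\hookrightarrow\Pi$, and \eqref{eq: obvious} yields $\soc(\pi\sdp\sigma)=Z(\n;\theta)=Z(\n;Z(\m_{=0})\sdp\sigma)$.

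The main obstacle will be keeping the bookkeeping in Stage 2 straight---namely identifying both $Z(\m_{\le0})\sdp\sigma$ and $\soc(\tilde\pi_+\sdp\theta)$ as the single irreducible $Z(\tilde\m_{<0};\theta)$---which is purely a matter of unwinding the $\pstv{}$ operation and the definition $\std{\m;\sigma}=\std{\pstv\m}\sdp\sigma$, with no new ideas required beyond the lemmas already established.
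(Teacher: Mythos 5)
Your proof reaches the same chain of embeddings as the paper---$\pi\sdp\sigma\hookrightarrow\pi_+\times\tilde\pi_+\sdp\theta$ where $\theta:=Z(\m_{=0})\sdp\sigma$, followed by Lemma \ref{lem: SI} and the identification $\soc(Z(\n)\sdp\theta)=Z(\n;\theta)$---and the use of Lemma \ref{lem: posremains} to ensure $\n$ is positive also matches. Your two-stage construction of the embedding is a more verbose route to the paper's single display: the irreducible $Z(\m_{\le0})\sdp\sigma$ coincides (as an irreducible representation) with $Z(\tilde\m_{\ge0})\sdp\sigma$, and the latter embeds into $\tilde\pi_+\times Z(\m_{=0})\sdp\sigma$. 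So the strategy is the right one.

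The one genuine gap is at the outset: you write ``since $\sigma\in\CuspG\subset\IrrctmpG$'' and then invoke Corollary \ref{cor: tempcase}, which is only stated for $\sigma\in\CuspG$; but the lemma is stated, and later applied (e.g.\ in Corollary \ref{cor: irrconseq}), for an arbitrary $\sigma\in\IrrctmpG$. The fact you actually need---that $\theta=Z(\m_{=0})\sdp\sigma$ is cotempered---does hold in this generality, but your cited corollary doesn't deliver it. One correct justification: all segments in $\m_{=0}$ have exponent $0$, hence are pairwise unlinked, so $Z(\m_{=0})=\std{\m_{=0}}$; by \eqref{eq: comodseg} and multiplicativity of $\comod$, every GL-constituent of $\comod(Z(\m_{=0}))$ has non-positive exponent, and combining with \eqref{eq: Tadic formula} and the cotemperedness of $\sigma$ shows every GL-constituent of $\comultG(\theta)$ has non-positive exponent, which is the definition of cotempered. (The paper uses this tacitly.) With this repair your argument coincides with the paper's.
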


\begin{proof}
By Corollary \ref{cor: Tadanalog} and our assumption, $Z(\m_{=0})\sdp\sigma$ is irreducible,
and hence $Z(\m_{=0})\sdp\sigma=Z(\tilde\m_{=0})\sdp\sigma$.
We have
\[
\pi\sdp\sigma\hookrightarrow\pi_+\times Z(\m_{\le 0})\sdp\sigma
\simeq\pi_+\times Z(\tilde\m_{\ge 0})\sdp\sigma\hookrightarrow
\pi_+\times\tilde\pi_+\times Z(\m_{=0})\sdp\sigma.
\]
By Lemma \ref{lem: SI} we infer that $\pi\sdp\sigma$ is \SI\ and
\begin{multline*}
\soc(\pi\sdp\sigma)=\soc(\pi_+\times\tilde\pi_+\times Z(\m_{=0})\sdp\sigma)\\
\simeq\soc(Z(\n)\times Z(\m_{=0})\sdp\sigma)=\soc\std{\n;Z(\m_{=0})\sdp\sigma}.
\end{multline*}
(Note that $\n$ is positive by Lemma \ref{lem: posremains}.)
The proposition follows.
\end{proof}

\begin{lemma}(cf.~\cite[p.~173]{MR863522}) \label{lem: MW}
Let $\pi_1,\pi_2\in\IrrGL$. Suppose that at least one of $\pi_1\times\pi_2$ or $\pi_2\times\pi_1$ is \SI.
Then the following conditions are equivalent.
\begin{enumerate}
\item $\pi_1\times\pi_2$ is irreducible.
\item $\pi_1\times\pi_2\simeq\pi_2\times\pi_1$.
\item $\soc(\pi_1\times\pi_2)\simeq\soc(\pi_2\times\pi_1)$.
\end{enumerate}
\end{lemma}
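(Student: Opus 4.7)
The equivalences $(1)\Leftrightarrow(2)$ and $(2)\Rightarrow(3)$ are essentially formal. For $(1)\Rightarrow(2)$, commutativity of $\GrGL$ gives $[\pi_1\times\pi_2]=[\pi_2\times\pi_1]$, so irreducibility of one forces both to be irreducible and equal to the unique simple object in their common Jordan--H\"older multiset. The implication $(2)\Rightarrow(3)$ is trivial, and the substance lies in $(3)\Rightarrow(1)$.

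For $(3)\Rightarrow(1)$ I would assume without loss of generality that $\pi_1\times\pi_2$ is \SI, and set $\tau=\soc(\pi_1\times\pi_2)\simeq\soc(\pi_2\times\pi_1)$. First I would observe that $\pi_2\times\pi_1$ is automatically \SI\ with socle $\tau$: the multiplicity of $\tau$ in $\pi_1\times\pi_2$ is one by the \SI\ hypothesis, hence also one in $\pi_2\times\pi_1$ by the common JH content in $\GrGL$, and $\soc(\pi_2\times\pi_1)=\tau$ is simple by assumption.

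The heart of the argument is to produce a nonzero intertwiner $M\colon\pi_1\times\pi_2\to\pi_2\times\pi_1$. Writing $n_i=\deg\pi_i$, Frobenius reciprocity gives
\[
\operatorname{Hom}_{\GL_{n_1+n_2}}(\pi_1\times\pi_2,\pi_2\times\pi_1)\simeq\operatorname{Hom}(\JacGLnm_{(n_2,n_1)}(\pi_1\times\pi_2),\pi_2\otimes\pi_1),
\]
so it suffices to exhibit $\pi_2\otimes\pi_1$ as a quotient of $\JacGLnm_{(n_2,n_1)}(\pi_1\times\pi_2)$. The Bernstein--Zelevinsky geometric lemma delivers this from the block-swap Weyl element, whose contribution sits at the top of the Jacquet filtration. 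I expect this step to be the main obstacle; the kernel analysis below is formal once $M$ is in place.

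Granting $M\ne0$, I would let $K=\ker M$ and $I=\operatorname{Im}M\subseteq\pi_2\times\pi_1$. If $K\ne0$, the \SI\ property of $\pi_1\times\pi_2$ forces $\tau\subseteq K$, and by multiplicity one of $\tau$ in $\pi_1\times\pi_2$ we get $\tau\notin\JH(I)$. But $I$ is a nonzero subobject of $\pi_2\times\pi_1$, so $\soc(I)\subseteq\soc(\pi_2\times\pi_1)=\tau$, forcing $\soc(I)=\tau$ and hence $\tau\in\JH(I)$ --- a contradiction. Therefore $M$ is injective; since $\pi_1\times\pi_2$ and $\pi_2\times\pi_1$ have the same length, $M$ is an isomorphism, yielding (2) and therefore (1).
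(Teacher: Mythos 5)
Your strategy is genuinely different from the paper's, and the difference is instructive, but the intertwiner step — which you correctly flag as the main obstacle — is not a gap you can close the way you suggest. In the geometric-lemma filtration of $\JacGLnm_{(n_2,n_1)}(\pi_1\times\pi_2)$, the block-swap Weyl element contributes $\pi_2\otimes\pi_1$ at the \emph{bottom}, not the top: it appears as a subobject, not a quotient. (The quotient piece of $\JacGLnm_{P}\circ i_P$ is always the counit map onto the inducing data, i.e.\ the ``identity'' double-coset term, and for the opposite parabolic it is the identity term that is a quotient.) Consequently Frobenius reciprocity applied to the top of the filtration does not hand you a nonzero element of $\operatorname{Hom}(\JacGLnm_{(n_2,n_1)}(\pi_1\times\pi_2),\pi_2\otimes\pi_1)$. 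One can still prove that $\operatorname{Hom}(\pi_1\times\pi_2,\pi_2\times\pi_1)$ is nonzero (indeed one-dimensional) using the meromorphic family of intertwining operators and a normalization argument, but that is a substantial input, not a formal consequence of the geometric lemma. Once you have $M\ne0$, your multiplicity-one-and-socle argument showing $M$ is injective, hence an isomorphism, is fine, as are the deductions that both products are \SI\ under $(3)$.

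The paper avoids the intertwiner entirely by exploiting the covariant functor $\iota$ (transpose-inverse) introduced in \eqref{eq: iotaind}: since $\iota(\pi_1\times\pi_2)\simeq\iota(\pi_2)\times\iota(\pi_1)$ and $\iota(\pi)\simeq\pi^\vee$ on irreducibles, one has $(\pi_1\times\pi_2)^\vee\simeq\iota(\pi_2\times\pi_1)$. Taking $\pi_2\times\pi_1$ to be \SI, it follows that $(\pi_1\times\pi_2)^\vee$ is \SI\ with $\soc((\pi_1\times\pi_2)^\vee)=\soc(\pi_2\times\pi_1)^\vee$, and then condition $(3)$ becomes exactly the hypothesis $\soc(\Pi)^\vee\simeq\soc(\Pi^\vee)$ of the elementary criterion \eqref{eq: pisimple}. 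This is shorter, requires no analytic input, and also dispenses with the length-comparison step, since \eqref{eq: pisimple} already encodes the kernel/image bookkeeping you redo by hand. If you wish to keep your route, you should replace the geometric-lemma appeal by a citation to the standard theory of intertwining operators for $\GL_n$, and correct the claim about where the swap term sits in the Jacquet filtration.
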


\begin{proof}
Clearly, 1$\implies$2$\implies$3.
Let $\Pi=\pi_1\times\pi_2$. Interchanging $\pi_1$ and $\pi_2$ if necessary we may assume that $\pi_2\times\pi_1$ is \SI.
Applying the functor $\iota$ and using \eqref{eq: iotaind} we deduce that
\[
\Pi^\vee=\pi_1^\vee\times\pi_2^\vee\simeq\iota(\pi_1)\times\iota(\pi_2)=\iota(\pi_2\times\pi_1)
\]
is \SI\ and
\[
\soc(\Pi^\vee)=\soc(\iota(\pi_2\times\pi_1))=\iota(\soc(\pi_2\times\pi_1))=(\soc(\pi_2\times\pi_1))^\vee.
\]
Therefore, condition 3 is equivalent to $\soc(\Pi)^\vee\simeq\soc(\Pi^\vee)$
which in turn is equivalent to the irreducibility of $\Pi$ by \eqref{eq: pisimple}.
\end{proof}

\begin{corollary} \label{cor: irrconseq}
Let $\m$ be a multisegment, $\pi=Z(\m)$ and $\sigma\in\IrrctmpG$.
Suppose that $Z(\m_{\ge0})\sdp\sigma$ and $Z(\m_{\le0})\sdp\sigma$ are irreducible
and that both $\pi_+\times\tilde\pi_+$ and $\tilde\pi_+\times\pi_+$ are \SI.
Then the following conditions are equivalent.
\begin{enumerate}
\item \label{part: irred} $\pi\sdp\sigma$ is irreducible.
\item \label{part: tildesym} $\pi\sdp\sigma\simeq\tilde\pi\sdp\sigma$.
\item \label{part: soctildesym} $\soc(\pi\sdp\sigma)\simeq\soc(\tilde\pi\sdp\sigma)$.
\item \label{part: m>0m<0} $\pi_+\times\tilde\pi_+$ is irreducible.
\item \label{part: m>0m<0sym} $\pi_+\times\tilde\pi_+\simeq\tilde\pi_+\times\pi_+$.
\item \label{part: m>0m<0soc} $\soc(\pi_+\times\tilde\pi_+)\simeq\soc(\tilde\pi_+\times\pi_+)$.
\end{enumerate}
\end{corollary}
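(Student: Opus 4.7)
The plan is to arrange the six conditions so that (1)$\Rightarrow$(2)$\Rightarrow$(3) is trivial, the equivalences (4)$\iff$(5)$\iff$(6) follow from Lemma \ref{lem: MW} applied to the pair $(\pi_+,\tilde\pi_+)$ (whose hypothesis is exactly the ``both SI'' assumption of the corollary), and the loop is closed via (3)$\Rightarrow$(6) and (4)$\Rightarrow$(1). Throughout, set $\theta=Z(\m_{=0})\sdp\sigma\in\IrrctmpG$ (irreducible and cotempered by Corollary \ref{cor: Tadanalog} together with the hypothesis) and $\mathfrak{q}=\m_{>0}+\widetilde{\m_{<0}}$, a positive multisegment. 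The implication (1)$\Rightarrow$(2) comes from \eqref{eq: indtildeinv}: equal Jordan--H\"older series plus irreducibility forces isomorphism; (2)$\Rightarrow$(3) is tautological.

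For (3)$\Rightarrow$(6), Lemma \ref{lem: SIstages} applied to $\pi$ gives directly $\soc(\pi\sdp\sigma)=Z(\n;\theta)$ with $Z(\n)=\soc(\pi_+\times\tilde\pi_+)$. I would then run the analogous chain for $\tilde\pi\sdp\sigma$:
\[
\tilde\pi\sdp\sigma\hookrightarrow\tilde\pi_+\sdp Z(\widetilde{\m_{\ge 0}})\sdp\sigma\simeq\tilde\pi_+\sdp Z(\m_{\ge 0})\sdp\sigma\hookrightarrow(\tilde\pi_+\times\pi_+)\sdp\theta,
\]
where the middle isomorphism uses \eqref{eq: indtildeinv} together with the irreducibility of $Z(\m_{\ge 0})\sdp\sigma$. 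Since $\tilde\pi_+\times\pi_+$ is SI by hypothesis, Lemma \ref{lem: SI} renders the right-hand side SI with socle $Z(\n';\theta)$, where $Z(\n')=\soc(\tilde\pi_+\times\pi_+)$; by \eqref{eq: obvious} $\tilde\pi\sdp\sigma$ inherits this socle. The uniqueness part of Theorem \ref{thm: Zelclass}(2) then converts (3) into $\n=\n'$, which is precisely (6).

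For (4)$\Rightarrow$(1), I would apply Corollary \ref{cor: stdirredcrit}(1) to $\pi\sdp\sigma$ with datum $(\mathfrak{q},\theta)$. Under (4), Lemma \ref{lem: MW} gives $\pi_+\times\tilde\pi_+\simeq\tilde\pi_+\times\pi_+=Z(\mathfrak{q})$, and the Lemma \ref{lem: SIstages} chain delivers the first embedding $\pi\sdp\sigma\hookrightarrow Z(\mathfrak{q})\sdp\theta\hookrightarrow\std{\mathfrak{q};\theta}$. For the second embedding, $\pi^\vee\sdp\sigma^\vee\hookrightarrow\std{\mathfrak{q}^\Galinv;\theta^\vee}$, I would run the parallel chain for $\pi^\vee$, using $(\pi^\vee)_+=Z(\m_{<0}^\vee)$ and $(\m^\vee)_{\le 0}=\m_{\ge 0}^\vee$, and invoking the irreducibility of $Z(\m_{\ge 0})\sdp\sigma$ with \eqref{eq: indtildeinv} to identify $Z(\m_{\ge 0}^\vee)\sdp\sigma^\vee\simeq Z(\m_{\ge 0}^\Galinv)\sdp\sigma^\vee$. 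This yields
\[
\pi^\vee\sdp\sigma^\vee\hookrightarrow\bigl(Z(\m_{<0}^\vee)\times Z(\m_{>0}^\Galinv)\bigr)\sdp\theta^\vee.
\]
The key identification is $Z(\m_{<0}^\vee)=\tilde\pi_+^\Galinv$, coming from $(\widetilde{\m_{<0}})^\Galinv=\m_{<0}^\vee$ (a direct consequence of $\widetilde{\phantom{x}}=\Galinv\circ{}^\vee$ together with $\Galinv^2=\id$). Since $\Galinv$ is a covariant ring automorphism of $\RepsGL$, it follows that $Z(\m_{<0}^\vee)\times Z(\m_{>0}^\Galinv)=(\tilde\pi_+\times\pi_+)^\Galinv=Z(\mathfrak{q})^\Galinv=Z(\mathfrak{q}^\Galinv)$, giving the desired embedding into $\std{\mathfrak{q}^\Galinv;\theta^\vee}$, and Corollary \ref{cor: stdirredcrit}(1) concludes.

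The main obstacle is constructing this second embedding: one must carefully juggle the three commuting involutions $\Galinv$, $^\vee$, and $\widetilde{\phantom{x}}$ on (multi)segments, and exploit the fact that $\Galinv$ is a covariant ring automorphism of $\RepsGL$ so as to transport the symmetry statement (5) for $\pi_+\times\tilde\pi_+$ into the structural equality $Z(\m_{<0}^\vee)\times Z(\m_{>0}^\Galinv)=Z(\mathfrak{q}^\Galinv)$ on which the application of Corollary \ref{cor: stdirredcrit}(1) rests.
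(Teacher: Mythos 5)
Your proof is correct and follows essentially the same route as the paper's: the equivalences (4)$\iff$(5)$\iff$(6) come from Lemma~\ref{lem: MW}, the implications (1)$\Rightarrow$(2)$\Rightarrow$(3) are immediate, and both proofs identify $\soc(\pi\sdp\sigma)$ and $\soc(\tilde\pi\sdp\sigma)$ via Lemma~\ref{lem: SIstages} to obtain (3)$\iff$(6). The only (cosmetic) difference is in closing the loop: the paper applies Lemma~\ref{lem: SIstages} to both $\m$ and $\m^\vee$ and then invokes \eqref{eq: pisimple} together with Theorem~\ref{thm: Zelclass}\ref{part: Zvee} to get (6)$\iff$(1), whereas you route (4)$\Rightarrow$(1) through Corollary~\ref{cor: stdirredcrit}(1) (which is itself just a packaged form of \eqref{eq: pisimple}) by explicitly re-deriving the embedding of $\pi^\vee\sdp\sigma^\vee$ into $\std{\mathfrak{q}^\Galinv;\theta^\vee}$ — the bookkeeping with $\Galinv$, $^\vee$ and $\tilde{\phantom{x}}$ that you carry out by hand is exactly what the paper absorbs into the abstract statement of Lemma~\ref{lem: SIstages} for $\m^\vee$.
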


\begin{proof}
By Lemma \ref{lem: MW} the conditions \ref{part: m>0m<0}, \ref{part: m>0m<0sym} and \ref{part: m>0m<0soc} are equivalent.
Clearly, \ref{part: irred}$\implies$\ref{part: tildesym}$\implies$\ref{part: soctildesym}.
Let $\Pi=\pi\sdp\sigma$.
By Lemma \ref{lem: SIstages} (applied to both $\m$ and $\tilde\m$) and the uniqueness part of Theorem \ref{thm: Zelclass},
the conditions \ref{part: soctildesym} and \ref{part: m>0m<0soc} are equivalent.
Similarly, applying Lemma \ref{lem: SIstages} to both $\m$ and $\m^\vee$ (with $\sigma$ and $\sigma^\vee$ respectively)
we deduce that $\Pi$ and $\Pi^\vee$ are \SI\ and that the condition
\[
\soc(\Pi)^\vee\simeq\soc(\Pi^\vee)
\]
is equivalent to condition \ref{part: m>0m<0soc} in view of Theorem \ref{thm: Zelclass} part \ref{part: Zvee}.
It remains to apply \eqref{eq: pisimple}.
\end{proof}

\subsection{}

From now on we fix $\sigma\in\CuspG$.

\begin{lemma} \label{lem: nonineq11}
Suppose that $\m$ is a ladder as in \eqref{eq: decep} with $k>1$ and $\expo(\Delta_k)\ge0$.
Assume that $\widetilde{\Delta_1}\prec\Delta_k$. Then
\[
Z(\widetilde{\Delta_1}\setminus\Delta_k)\otimes\tau\not\le\comod(Z(\m))\text{ for any }\tau\in\IrrGL.
\]
Hence,
\[
Z(\widetilde{\Delta_1}\setminus\Delta_k)\otimes\tau\not\le\comultG(Z(\m)\sdp\sigma)\text{ for any }\tau\in\IrrG.
\]
\end{lemma}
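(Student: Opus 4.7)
The plan is to expand $\comod(Z(\m))$ via the ladder formula \eqref{eq: comodladder}; each summand has first tensor factor of the form $Z(\m_L)\times\tilde Z(\m_R)$, where (writing $a_i=b(\Delta_i)$ and $b_i=e(\Delta_i)$) $\m_L=\sum_i[a_i,\rho_i]$ and $\m_R=\sum_i[\rshft{\rho_i'},b_i]$ are parametrized by $\rho_i\le\rho_i'\in\lextend\Delta_i$ with the strict inequalities $\rho_1>\dots>\rho_k$ and $\rho_1'>\dots>\rho_k'$. Each summand is the class of a genuine representation, so it suffices to show that $Z(\widetilde{\Delta_1}\setminus\Delta_k)$ is not a Jordan--H\"older constituent of any such $Z(\m_L)\times\tilde Z(\m_R)$. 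The second assertion of the lemma will then follow at once from \eqref{eq: Tadic formula} and $\comultG(\sigma)=\one\otimes\sigma$.

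First, the hypothesis $\widetilde{\Delta_1}\prec\Delta_k$ gives $\tilde b_1<a_k$, whence $\widetilde{\Delta_1}\setminus\Delta_k=[\tilde b_1,\lshft{a_k}]$; every cuspidal in this segment has exponent at most $\expo(a_k)-1$. On the other hand, the cuspidal support of $Z(\m_L)$ lies in $\bigcup_i\Delta_i\subset[a_k,b_1]$, whose cuspidals all have exponent $\ge\expo(a_k)$. Matching cuspidal supports in a hypothetical inequality $Z(\widetilde{\Delta_1}\setminus\Delta_k)\le Z(\m_L)\times\tilde Z(\m_R)$ therefore forces $\m_L=\emptyset$, i.e., $\rho_i=\lshft{a_i}$ for every $i$.

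Next, observe that $\tilde\m_R=\sum_i[\tilde b_i,\lshft{\tilde{\rho_i'}}]$ (restricted to its nonempty segments) is itself a ladder, since the sequences $\tilde b_i$ and $\lshft{\tilde{\rho_i'}}$ are both strictly increasing in $i$. Hence $\tilde Z(\m_R)=Z(\tilde\m_R)$ is irreducible as a ladder representation, and $Z([\tilde b_1,\lshft{a_k}])\le\tilde Z(\m_R)$ forces the two to coincide---i.e., $\tilde\m_R$ consists of the single segment $[\tilde b_1,\lshft{a_k}]$. Since the beginnings $\tilde b_i$ are pairwise distinct, only the slot $i=1$ can produce this segment, and this gives $\rho_1'=\tilde a_k$ and $\rho_j'=b_j$ for every $j\ge 2$.

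The strict inequality $\rho_1'>\rho_2'$ required by \eqref{eq: comodladder} now fails: since $\expo(\Delta_k)\ge 0$ we have $\tilde a_k\le b_k$, and since $k>1$ the ladder condition gives $b_k<b_2$, so $\rho_1'=\tilde a_k\le b_k<b_2=\rho_2'$. This contradiction completes the argument. The key insight---which turns what looks like an intricate combinatorial analysis into a one-line check---is that $\tilde\m_R$ remains a ladder, so the irreducibility of ladder representations immediately pins down the entire configuration; everything else is bookkeeping of the ladder data.
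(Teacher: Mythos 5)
Your proof is correct and follows essentially the same route as the paper's: both expand $\comod(Z(\m))$ via the ladder formula \eqref{eq: comodladder}, force the left factor $Z(\m_L)$ of each relevant summand to be $\one$ by comparing cuspidal supports and exponents (equivalently, $\rho_i=b(\lshft{\Delta_i})$ for all $i$), and then analyze the remaining factor $\tilde Z(\m_R)$. Where the paper computes the unique admissible choice of the $\rho_i'$ with the correct cuspidal data and observes that the resulting multisegment has $k>1$ segments, hence is not $\widetilde{\Delta_1}\setminus\Delta_k$, you instead posit $\tilde Z(\m_R)=Z(\widetilde{\Delta_1}\setminus\Delta_k)$, read off $\rho_1'=\widetilde{b(\Delta_k)}$ and $\rho_j'=e(\Delta_j)$ for $j\ge 2$, and derive the contradiction $\rho_1'\le\rho_2'$ --- the same obstruction, approached from the other end. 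Two cosmetic remarks: for $k=2$ your ``$b_k<b_2$'' should read ``$b_k\le b_2$'' (the conclusion $\rho_1'\le\rho_2'$ still contradicts $\rho_1'>\rho_2'$); and the irreducibility of $\tilde Z(\m_R)=Z(\tilde\m_R)$ is automatic for \emph{any} multisegment by the Zelevinsky classification, so the observation that $\tilde\m_R$ is a ladder, while true, is not what that step is actually using.
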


\begin{proof}
The second statement follows from the first one since $\sigma$ is supercuspidal.
To prove the first part, let
\[
\data=\data_{\widetilde{\Delta_1}\setminus\Delta_k}=\sum_{\rho\in\widetilde{\Delta_1}\setminus\Delta_k}\rho\in\MS{\CuspGL}.
\]
Since $k>1$ it suffices to show that $\comod_{\{\data\};*}(Z(\m))$ is equal to
\[
\tilde Z(\big(\sum_{i=1}^{k-1}(\Delta_i\setminus\Delta_{i+1})\big)+(\Delta_k\setminus\widetilde{\Delta_k}))\otimes
Z(\sum_{i=1}^{k-1}[b(\Delta_i),e(\Delta_{i+1})]+[b(\Delta_k),e(\widetilde{\Delta_k})])
\]
if $\Delta_i\prec\Delta_{i-1}$ for all $i<k$ and $\widetilde{\Delta_k}\ne\Delta_k$ and $\comod_{\{\data\};*}(Z(\m))=0$ otherwise.
Consider a term $\tau_1\otimes\tau_2$ on the right-hand side of \eqref{eq: comodladder} where
\[
\tau_1=Z(\sum_{i=1}^k[b(\Delta_i),\rho_i])\times\tilde Z(\sum_{i=1}^k[\rshft{\rho_i'},e(\Delta_i)]),\ \ \tau_2= Z(\sum_{i=1}^k[\rshft{\rho_i},\rho_i'])
\]
with $\rho_i,\rho_i'\in\lextend\Delta_i$, $\rho_i'\ge\rho_i$ for all $i$, $\rho_1>\dots>\rho_k$,
and $\rho'_1>\dots>\rho'_k$. If $\data_{\tau_1}=\data$ then we would necessarily have $\rho_i=b(\lshft{\Delta_i})$ for all $i$,
$\Delta_{i+1}\prec\Delta_i$, $\rho'_i=e(\Delta_{i+1})$, $i=1,\dots,k-1$, $\rho'_k=e(\widetilde{\Delta_k})<e(\Delta_k)$.
Our assertion follows.
\end{proof}

\begin{lemma} \label{lem: nonegirr}
Suppose that $\m$ is a ladder as in \eqref{eq: decep} with $\expo(\Delta_k)\ge0$.
Assume that $\supp\m\cap\cuspred_\sigma=\emptyset$. Then $Z(\m)\sdp\sigma$ is irreducible.
\end{lemma}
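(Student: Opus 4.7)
The plan is to apply Corollary~\ref{cor: stdirredcrit}(2) to the representation $\pi=Z(\m)\sdp\sigma$. The ladder condition together with $\expo(\Delta_k)\ge 0$ forces $\expo(\Delta_1)>\dots>\expo(\Delta_k)\ge 0$, so $\m\in\MS{\Seg_{\ge 0}}$ with $\pstv{\m}=\m$; since $\Galinv$ preserves both the partial order on $\CuspGL$ and the exponent, $\m^\Galinv$ is likewise a non-negative ladder with $\pstv{\m^\Galinv}=\m^\Galinv$. Hence $\std{\m;\sigma}=\std{\m}\sdp\sigma$ and $\std{\m^\Galinv;\sigma^\vee}=\std{\m^\Galinv}\sdp\sigma^\vee$. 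The hypothesis $\supp\m\cap\cuspred_\sigma=\emptyset$ implies that $\rho\sdp\sigma$ is irreducible for every $\rho\in\supp\m_{=0}$, so the standing hypotheses of Corollary~\ref{cor: stdirredcrit}(2) are in place.

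The first required embedding, $Z(\m)\sdp\sigma\hookrightarrow\std{\m}\sdp\sigma$, is immediate from the Zelevinsky embedding $Z(\m)\hookrightarrow\std{\m}=Z(\Delta_1)\times\dots\times Z(\Delta_k)$ (the ladder order $\Delta_1,\dots,\Delta_k$ satisfies $\Delta_i\not\prec\Delta_j$ for $i<j$) together with exactness of $\sdp\sigma$. For the second embedding I would begin with
\[
Z(\m^\vee)\hookrightarrow\std{\m^\vee}=Z(\Delta_k^\vee)\times\dots\times Z(\Delta_1^\vee),
\]
the order now being reversed because $\vee$ reverses the partial order on $\CuspGL$. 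Applying $\sdp\sigma^\vee$ and invoking \eqref{eq: indtildeinv} as an actual isomorphism yields $\std{\m^\vee}\sdp\sigma^\vee\simeq\widetilde{\std{\m^\vee}}\sdp\sigma^\vee$. The identity $\widetilde{A\times B}=\tilde B\times\tilde A$ applied iteratively, together with $\widetilde{Z(\Delta^\vee)}=Z(\Delta^\Galinv)$, then gives
\[
\widetilde{\std{\m^\vee}}=Z(\Delta_1^\Galinv)\times\dots\times Z(\Delta_k^\Galinv)=\std{\m^\Galinv},
\]
so $Z(\m^\vee)\sdp\sigma^\vee\hookrightarrow\std{\m^\Galinv}\sdp\sigma^\vee$ as required, and Corollary~\ref{cor: stdirredcrit}(2) produces the irreducibility of $Z(\m)\sdp\sigma$.

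The main obstacle is that \eqref{eq: indtildeinv} is recorded in the text only as a Jordan--H\"older identity, while the argument above uses it as a genuine isomorphism of representations. For classical groups this stronger statement is standard: the normalizer in $G_n$ of the Levi $\GL_m\times G_{n-m}$ contains a Weyl element whose action on the $\GL_m$-factor is the outer involution $\tilde{}$. The paper itself uses \eqref{eq: indtildeinv} in this stronger sense elsewhere (compare for instance the isomorphism $Z(\Delta_1)\times Z(\Delta_2)\sdp\sigma\simeq Z(\Delta_1)\times Z(\widetilde{\Delta_2})\sdp\sigma$ in the proof of Lemma~\ref{lem: irredcrit}), so the step is legitimate. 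As a contingency one could instead proceed by induction on $k$, using Lemma~\ref{lem: nonineq11} to rule out the unwanted Jordan--H\"older components in the inductive step.
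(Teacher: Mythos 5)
Your proposed main argument has a genuine gap in the step claiming $\std{\m^\vee}\sdp\sigma^\vee\simeq\widetilde{\std{\m^\vee}}\sdp\sigma^\vee$ via ``\eqref{eq: indtildeinv} as an actual isomorphism.'' The Weyl element you invoke normalizes the Levi $\GL_m\times G_{n-m}$ but \emph{not} the parabolic subgroup: it conjugates the standard parabolic $P$ to its opposite $\bar P$. Hence what one gets from the Weyl-element identity is $I_P^{G}(\pi\otimes\sigma)\simeq I_{\bar P}^{G}(\tilde\pi\otimes\sigma)$, and $I_P$ and $I_{\bar P}$ only agree in the Grothendieck group, not as representations (already for $\mathrm{SL}_2$, inducing $\nu^{1/2}$ from $P$ and from $\bar P$ gives the trivial representation as a subrepresentation in one case and as a quotient in the other). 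So $\pi\sdp\sigma$ and $\tilde\pi\sdp\sigma$ are \emph{not} isomorphic in general; indeed Corollary~\ref{cor: irrconseq} in the paper records precisely that (under suitable SI hypotheses) $\pi\sdp\sigma\simeq\tilde\pi\sdp\sigma$ is \emph{equivalent} to irreducibility of $\pi\sdp\sigma$, which is what you are trying to prove — your argument is circular at this point.

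The instance you cite from the proof of Lemma~\ref{lem: irredcrit} does not support the blanket use: there the isomorphism $Z(\Delta_1)\times Z(\Delta_2)\sdp\sigma\simeq Z(\Delta_1)\times Z(\widetilde{\Delta_2})\sdp\sigma$ follows by induction in stages from the isomorphism $Z(\Delta_2)\sdp\sigma\simeq Z(\widetilde{\Delta_2})\sdp\sigma$ of \emph{irreducible} representations (granted by Theorem~\ref{thm: oneseg} and the hypothesis $\Delta_2\cap\cuspred_\sigma=\emptyset$). The twist is only ever applied at the innermost single $\sdp\sigma$ factor, where irreducibility is already known. You cannot apply it wholesale to the iterated product $Z(\Delta_k^\vee)\times\dots\times Z(\Delta_1^\vee)\sdp\sigma^\vee$, and after peeling off $Z(\Delta_1^\vee)\sdp\sigma^\vee\simeq Z(\Delta_1^\Galinv)\sdp\sigma^\vee$ you cannot in general commute $Z(\Delta_1^\Galinv)$ past $Z(\Delta_2^\vee)$ unless $\widetilde{\Delta_1}$ and $\Delta_2$ are unlinked. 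The case $\widetilde{\Delta_k}\prec\Delta_1$ (equivalently $\widetilde{\Delta_1}\prec\Delta_k$) is exactly where your argument breaks down and is exactly the hard case in the paper's proof. Your ``contingency'' — induction on $k$ together with Lemma~\ref{lem: nonineq11} — is not a fallback but the essential content: the paper proceeds by induction, splits into the case $\widetilde{\Delta_k}\not\prec\Delta_1$ (where the commuting is legitimate and the argument is basically yours) and the case $\widetilde{\Delta_k}\prec\Delta_1$, and in the latter uses Corollary~\ref{cor: length2} to identify the sub/quotient of $Z(\m'^\vee)\times Z(\Delta_1^\Galinv)\sdp\sigma^\vee$ and Lemma~\ref{lem: nonineq11} to show $Z(\m^\vee)\sdp\sigma^\vee$ lands in the right piece. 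That part of the work is missing from your proposal.
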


\begin{proof}
We prove this by induction on $k$. The case $k=1$ is Theorem \ref{thm: oneseg}.
Assume that $k>1$ and that the statement holds for $k-1$. Clearly,
\[
Z(\m)\sdp\sigma\hookrightarrow Z(\Delta_1)\times\dots\times Z(\Delta_k)\sdp\sigma=
\std{\m;\sigma}.
\]
By Corollary \ref{cor: stdirredcrit} it suffices to show that
\[
Z(\m^\vee)\sdp\sigma^\vee\hookrightarrow Z(\Delta_1^\Galinv)\times\dots\times Z(\Delta_k^\Galinv)\sdp\sigma^\vee=
\std{\m^\Galinv;\sigma^\vee}.
\]
Write $\m'=\Delta_2+\dots+\Delta_k$ so that $\m=\m'+\Delta_1$.
We have
\begin{equation} \label{eq: 1st embed}
Z(\m^\vee)\sdp\sigma^\vee\hookrightarrow Z(\m'^\vee)\times Z(\Delta_1^\vee)\sdp\sigma^\vee\simeq
Z(\m'^\vee)\times Z(\Delta_1^\Galinv)\sdp\sigma^\vee.
\end{equation}
If $\widetilde{\Delta_k}\not\prec\Delta_1$, or equivalently, $\Delta_k^\vee\not\prec\Delta_1^\Galinv$
(in which case $\Delta_i^\vee\not\prec\Delta_1^\Galinv$ for all $i>1$) then
\[
Z(\m'^\vee)\times Z(\Delta_1^\Galinv)\sdp\sigma^\vee\simeq Z(\Delta_1^\Galinv)\times Z(\m'^\vee)\sdp\sigma^\vee
\]
which by the induction hypothesis is isomorphic to
\[
Z(\Delta_1^\Galinv)\times Z(\m'^\Galinv)\sdp\sigma^\vee\hookrightarrow\std{\m^\Galinv;\sigma^\vee}
\]
as required.

Assume now that $\widetilde{\Delta_k}\prec\Delta_1$ and let $\Gamma=\Delta_1\cup\widetilde{\Delta_k}$,
$\gamma=\Delta_1\cap\widetilde{\Delta_k}$, $\m''=\Delta_2+\dots+\Delta_{k-1}$ and $\n=\m''^\vee+\Gamma^\Galinv+\gamma^\Galinv$.
Then by Corollary \ref{cor: length2} we have an exact sequence
\begin{equation} \label{eq: es2}
0\rightarrow\Pi:=Z(\n)\sdp\sigma^\vee\rightarrow Z(\m'^\vee)\times Z(\Delta_1^\Galinv)\sdp\sigma^\vee\rightarrow
Z(\Delta_1^\Galinv+\m'^\vee)\sdp\sigma^\vee\rightarrow0.
\end{equation}
We first show that $\Pi$ is irreducible.
Note that $\Gamma=[b(\widetilde{\Delta_k}),e(\Delta_1)]$ and $\gamma=[b(\Delta_1),e(\widetilde{\Delta_k})]$.
Hence, $\expo(\Gamma),\expo(\gamma)\ge0$, $\Gamma\supset\tilde\gamma$ and $\Gamma\supset\Delta_i\supset\gamma$ for all $1<i<k$. Thus,
\begin{multline*}
\std{\m''}\times Z(\tilde\Gamma)\times Z(\tilde\gamma)\sdp\sigma\simeq
\std{\m''}\times Z(\tilde\Gamma)\times Z(\gamma)\sdp\sigma\simeq\\
\std{\m''}\times Z(\gamma)\times Z(\tilde\Gamma)\sdp\sigma\simeq
\std{\m''}\times Z(\gamma)\times Z(\Gamma)\sdp\sigma\simeq\std{\n^\Galinv;\sigma}
\end{multline*}
and therefore
\[
\Pi^\vee=Z(\n^\vee)\sdp\sigma\hookrightarrow Z(\m'')\times Z(\tilde\Gamma)\times Z(\tilde\gamma)\sdp\sigma\hookrightarrow
\std{\n^\Galinv;\sigma}.
\]
On the other hand,
\[
\Pi\hookrightarrow Z(\Gamma^\Galinv)\times Z(\gamma^\Galinv)\times Z(\m''^\vee)\sdp\sigma^\vee
\]
which by the induction hypothesis is isomorphic to
\[
Z(\Gamma^\Galinv)\times Z(\gamma^\Galinv)\times Z(\m''^\Galinv)\sdp\sigma^\vee\hookrightarrow
Z(\Gamma^\Galinv)\times Z(\gamma^\Galinv)\times\std{\m''^\Galinv}\sdp\sigma^\vee\simeq\std{\n;\sigma^\vee}.
\]
Thus $\Pi$ is irreducible by Corollary \ref{cor: stdirredcrit}.

By Corollary \ref{cor: length2} and Lemma \ref{lem: nonineq11}, respectively, we have
\[
Z(\Delta_1^\vee\setminus\Delta_k^\Galinv)\otimes\tau\le\comultG(\Pi)\text{ for some }\tau\in\IrrG
\]
while
\[
Z(\Delta_1^\vee\setminus\Delta_k^\Galinv)\otimes\tau\not\le\comultG(Z(\m^\Galinv)\sdp\sigma^\vee)=\comultG(Z(\m^\vee)\sdp\sigma^\vee)\text{ for any }\tau\in\IrrG.
\]
Hence,
\[
\Pi\not\le Z(\m^\vee)\sdp\sigma^\vee.
\]
Together with \eqref{eq: 1st embed} and \eqref{eq: es2} we conclude that
\[
Z(\m^\vee)\sdp\sigma^\vee\hookrightarrow Z(\Delta_1^\Galinv+\m'^\vee)\sdp\sigma^\vee\hookrightarrow
Z(\Delta_1^\Galinv)\times Z(\m'^\vee)\sdp\sigma^\vee
\]
which by the induction hypothesis is isomorphic to
\[
Z(\Delta_1^\Galinv)\times Z(\m'^\Galinv)\sdp\sigma^\vee\hookrightarrow\std{\m^\Galinv;\sigma^\vee}
\]
as required.
\end{proof}

\begin{remark}
Lemma \ref{lem: nonegirr} does not hold for a positive multisegment in general.
For instance, we can take $\m=\rho+\rho$ where $\tilde\rho=\lshft\rho$. (Such $\rho\notin\cuspred_\sigma$ always exists.)
Clearly, $Z(\m)\sdp\sigma$ is reducible since $\rho\times\tilde\rho\sdp\sigma$ is reducible.
\end{remark}

Using Lemma \ref{lem: nonegirr} and \cite[Proposition 6.15]{MR3573961} we can infer from Lemma \ref{lem: SIstages}
and Corollary \ref{cor: irrconseq} the following.

\begin{corollary}
Suppose that $\m$ is a ladder and $\supp\m\cap\cuspred_\sigma=\emptyset$. Let $\pi=Z(\m)$. Then
\begin{enumerate}
\item $\pi\sdp\sigma$ is \SI.
\item If $\soc(\pi_+\times\tilde\pi_+)=Z(\n)$ then $\soc(\pi\sdp\sigma)=Z(\n;Z(\m_{=0})\sdp\sigma)$.
\item The conclusion of Corollary \ref{cor: irrconseq} holds.
\end{enumerate}
\end{corollary}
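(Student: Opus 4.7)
The plan is to verify the hypotheses of Lemma~\ref{lem: SIstages} and Corollary~\ref{cor: irrconseq} for $\m$, and then to invoke those results directly for parts (1)--(3).

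First I would observe that any sub-multisegment of a ladder is again a ladder, since the strict inequalities defining \eqref{eq: decep} are preserved under passage to a subset of segments, and that within a ladder $\Delta_1+\dots+\Delta_k$ the exponents $\expo(\Delta_i)$ are strictly decreasing in $i$ (both $\expo(b(\Delta_i))$ and $\expo(e(\Delta_i))$ are, and $\expo(\Delta_i)$ is their average). Hence $\m_{>0}$, $\m_{\ge 0}$, $\m_{=0}$, $\m_{\le 0}$, $\m_{<0}$ are all ladders, each consisting of a contiguous block of segments of $\m$. Because the involution $\tilde{}$ reverses the linear order on every line $\rho[\R]$ (as $\expo(\tilde\rho)=-\expo(\rho)$), applying it segment-by-segment to $\m_{<0}$ and reversing the indexing produces a ladder $\widetilde{\m_{<0}}$ whose smallest exponent is $\ge 0$. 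In particular $\pi_+=Z(\m_{>0})$ and $\tilde\pi_+=Z(\widetilde{\m_{<0}})$ are both ladder representations.

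Next I would apply Lemma~\ref{lem: nonegirr} three times. Applied to the ladder $\m_{\ge 0}$ (whose support is contained in $\supp\m$, hence disjoint from $\cuspred_\sigma$), it gives the irreducibility of $Z(\m_{\ge 0})\sdp\sigma$; applied to $\m_{=0}$, it gives irreducibility of $Z(\m_{=0})\sdp\sigma$; and applied to $\widetilde{\m_{\le 0}}$ (using $\widetilde{\cuspred_\sigma}=\cuspred_\sigma$ to see that its support avoids $\cuspred_\sigma$), it gives irreducibility of $Z(\widetilde{\m_{\le 0}})\sdp\sigma$, which by \eqref{eq: indtildeinv} is equivalent to irreducibility of $Z(\m_{\le 0})\sdp\sigma$.

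Finally, by \cite[Proposition 6.15]{MR3573961}, the product of two ladder representations is \SI\ in either order, so both $\pi_+\times\tilde\pi_+$ and $\tilde\pi_+\times\pi_+$ are \SI. Since $\sigma\in\CuspG$ we have $\comultG(\sigma)=\one\otimes\sigma$ with $\expo(\one)=0$, so $\sigma\in\IrrctmpG$. All hypotheses of Lemma~\ref{lem: SIstages} are now in place and yield parts (1) and (2), while Corollary~\ref{cor: irrconseq} applies and yields part (3). The only real care in the argument is in the first step — checking that $\tilde{}$ sends $\m_{<0}$ to a ladder after relabelling, and that passing through $\tilde{}$ preserves the support and exponent hypotheses needed for Lemma~\ref{lem: nonegirr} — but this is an immediate consequence of the definitions.
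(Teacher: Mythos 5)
Your argument is correct and matches the paper's intended (one-line) justification exactly: the paper itself introduces the corollary with "Using Lemma \ref{lem: nonegirr} and \cite[Proposition 6.15]{MR3573961} we can infer from Lemma \ref{lem: SIstages} and Corollary \ref{cor: irrconseq} the following." Your verification that $\m_{\ge0}$, $\m_{=0}$, $\m_{\le0}$ (after applying $\tilde{}$) are ladders to which Lemma \ref{lem: nonegirr} applies, and that $\pi_+$, $\tilde\pi_+$ are ladders so that the \SI\ hypotheses hold, fills in precisely the details the paper leaves implicit.
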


\begin{remark}
Suppose that $\m=\Delta_1+\dots+\Delta_t$ and $\m'=\Delta_1'+\dots+\Delta_{t'}'$ are two ladders and let $\pi=Z(\m)$, $\pi'=Z(\m')$.
Then there is a simple combinatorial procedure to determine the multisegment corresponding to $\soc(\pi\times\pi')$ \cite[Corollary 6.16]{MR3573961}.
In particular (\cite[Proposition 6.20 and Lemma 6.21]{MR3573961}), $\soc(\pi\times\pi')\not\simeq Z(\m+\m')$
if and only if there exist integers $i,j\geq 0$ and $ \ell\geq 1$ satisfying $i+\ell\leq t,j+\ell\leq t'$, such that
\begin{enumerate}
\item $\Delta_{i+1} \prec \Delta'_{j+1},\Delta_{i+2} \prec \Delta'_{j+2},\dots,\Delta_{i+\ell} \prec \Delta'_{j+\ell}.$
\item Either $i=0$ or ${\overset{\leftarrow}{\Delta}}_i\not\prec \Delta'_{j+1}$.
\item Either $j+\ell = t'$ or ${\overset{\leftarrow}{ \Delta}}_{i+\ell}\not\prec\Delta'_{j+\ell+1}$.
\end{enumerate}
Recall that $\pi\times\pi'$ is irreducible if and only if $\soc(\pi\times\pi')\simeq Z(\m+\m')\simeq \soc(\pi'\times\pi)$.
\end{remark}

\subsection{}
In this subsection we fix $\alpha\in\cuspred_\sigma$ with $\expo(\alpha)\ge0$. We will consider $\pi=Z(\m)$ such that
\begin{equation} \label{eq: supppi2}
\supp\pi\subset\alpha[\Z]\text{ and }\supp\pi\cap\cuspred_\sigma=\emptyset.
\end{equation}
We can uniquely write $\m=\m_{>\alpha}+\m_{[\tilde\alpha,\alpha]}+\m_{<\alpha}$ where
\begin{gather*}
\supp\m_{>\alpha}\subset\{\rho\in\CuspGL:\rho>\alpha\},\\
\supp\m_{<\tilde\alpha}\subset\{\rho\in\CuspGL:\rho<\tilde\alpha\},\\
\supp\m_{[\tilde\alpha,\alpha]}\subset\{\rho\in\CuspGL:\tilde\alpha<\rho<\alpha\}.
\end{gather*}
Correspondingly,
\begin{equation} \label{eq: Z(m)product}
\pi=\pi_{>\alpha}\times\pi_{[\tilde\alpha,\alpha]}\times\pi_{<\tilde\alpha}\text{ where }
\pi_{>\alpha}=Z(\m_{>\alpha}), \pi_{[\tilde\alpha,\alpha]}=Z(\m_{[\alpha,\alpha]}),\ \pi_{<\tilde\alpha}=Z(\m_{<\tilde\alpha}).
\end{equation}
Clearly
\begin{equation} \label{eq: >alphatilde}
\tilde\pi_{>\alpha}=\widetilde{\pi_{<\tilde\alpha}},\ \ \tilde\pi_{<\alpha}=\widetilde{\pi_{>\tilde\alpha}},\ \
\tilde\pi_{[\tilde\alpha,\alpha]}=\widetilde{\pi_{[\tilde\alpha,\alpha]}}.
\end{equation}

\begin{lemma} \label{lem: >alphairred}
Suppose that $\pi\in\IrrGL$ satisfies \eqref{eq: supppi2} and in addition, $\m=\m_{>\alpha}$
(i.e., $\rho>\alpha$ for any $\rho\in\supp\pi$). Then $Z(\m)\sdp\sigma$ is irreducible.
\end{lemma}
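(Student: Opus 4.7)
The plan is to argue by induction on $\deg\pi$, modeled on the proof of Lemma \ref{lem: badrhopos}, and to conclude via Corollary \ref{cor: stdirredcrit}. The base case $\pi=\one$ is trivial. For the inductive step, write $\pi=Z(\m)$ and pick an \SI\ ordering $\m=\Delta+\m'$ with $\Delta$ of maximal exponent. Every $\rho\in\supp\pi$ is of the form $\alpha[n]$ with $n\ge 1$, and $\expo(\alpha)\ge 0$, so every segment of $\m$ has exponent $>0$; hence $\m$ is positive and $\pi\sdp\sigma\hookrightarrow\std{\m;\sigma}=\std{\m}\sdp\sigma$ follows immediately from $Z(\m)\hookrightarrow\std{\m}$. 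It remains to produce the second embedding $\pi^\vee\sdp\sigma^\vee=Z(\m^\vee)\sdp\sigma^\vee\hookrightarrow\std{\m^\Galinv;\sigma^\vee}=\std{\m^\Galinv}\sdp\sigma^\vee$.

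Since $^\vee$ reverses $\prec$, the reverse of an \SI\ ordering of $\m$ is an \SI\ ordering of $\m^\vee$; as $\Delta^\vee$ then sits last, $\std{\m^\vee}=\std{\m'^\vee}\times Z(\Delta^\vee)$, and since $\std{\m^\vee}$ is \SI\ with socle $Z(\m^\vee)$, \eqref{eq: obvious} yields $Z(\m^\vee)\hookrightarrow Z(\m'^\vee)\times Z(\Delta^\vee)$. Next, $\Delta\cap\cuspred_\sigma=\emptyset$ implies $\Delta^\vee\cap\cuspred_{\sigma^\vee}=\emptyset$, using $\cuspred_{\sigma^\vee}=\cuspred_\sigma^\Galinv$ and $\tilde{\cuspred_\sigma}=\cuspred_\sigma$; thus $Z(\Delta^\vee)\sdp\sigma^\vee$ is irreducible by Theorem \ref{thm: oneseg}, and by \eqref{eq: indtildeinv} isomorphic to $Z(\Delta^\Galinv)\sdp\sigma^\vee$.

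The crucial new input is that $Z(\m'^\vee)\times Z(\Delta^\Galinv)$ is irreducible. Writing $\tilde\alpha=\alpha[c]$ with $c\le 0$, one has $\alpha^\Galinv=\alpha^\vee[-c]$, so both factors lie in $\alpha^\vee[\Z]$; the support of $\m'^\vee$ consists of $\alpha^\vee[n]$ with $n\le -1$, while that of $\Delta^\Galinv$ consists of $\alpha^\vee[n]$ with $n\ge 1$. A direct check on indices shows that no segment of $\m'^\vee$ is linked with $\Delta^\Galinv$, whence by \cite[Corollary 5.14]{MR3573961} the product is irreducible, and Lemma \ref{lem: MW} supplies the isomorphism $Z(\m'^\vee)\times Z(\Delta^\Galinv)\simeq Z(\Delta^\Galinv)\times Z(\m'^\vee)$.

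Finally, the induction hypothesis applied to $(\sigma^\vee,\alpha^\Galinv,\m'^\Galinv)$—whose hypotheses transport via $\Galinv$—gives that $Z(\m'^\Galinv)\sdp\sigma^\vee$ is irreducible, so $Z(\m'^\vee)\sdp\sigma^\vee\simeq Z(\m'^\Galinv)\sdp\sigma^\vee$ by \eqref{eq: indtildeinv}. Chaining:
\[
Z(\m^\vee)\sdp\sigma^\vee\hookrightarrow Z(\m'^\vee)\times Z(\Delta^\vee)\sdp\sigma^\vee\simeq Z(\Delta^\Galinv)\times Z(\m'^\Galinv)\sdp\sigma^\vee\hookrightarrow Z(\Delta^\Galinv)\times\std{\m'^\Galinv}\sdp\sigma^\vee=\std{\m^\Galinv}\sdp\sigma^\vee,
\]
and Corollary \ref{cor: stdirredcrit} concludes. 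The principal obstacle is the irreducibility of $Z(\m'^\vee)\times Z(\Delta^\Galinv)$: the strict inequality $\rho>\alpha$ together with $\expo(\alpha)\ge 0$ is precisely what creates the gap in $\alpha^\vee[\Z]$ around index $0$, without which segments of $\m'^\vee$ could become linked with $\Delta^\Galinv$.
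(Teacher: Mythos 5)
Your proof is correct and is essentially the paper's argument in mirror image: where you peel off a segment of \emph{maximal} exponent so that $\Delta^\vee$ sits last in the SI ordering of $\m^\vee$, the paper peels off a segment of \emph{minimal} exponent so that $\Delta^\vee$ sits first; accordingly you swap $Z(\Delta^\vee)$ for $Z(\Delta^\Galinv)$ first and invoke the induction hypothesis afterwards, while the paper does the reverse. In both variants the decisive observation is the same one you isolate at the end — the gap at $\alpha^\vee[0]$ created by the strict inequality $\rho>\alpha$ and by $\expo(\alpha)\ge0$ makes the relevant $\times$-product of unlinked pieces irreducible and hence commutative — and both conclude by Corollary~\ref{cor: stdirredcrit}.
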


\begin{proof}
Clearly $Z(\m)\sdp\sigma\hookrightarrow\std{\m;\sigma}$.
We prove that $Z(\m^\vee)\sdp\sigma^\vee\hookrightarrow\std{\m^\Galinv;\sigma^\vee}$ by induction on $\deg\m$.
For the induction step, write $\m=\m'+\Delta$ where $\expo(\Delta)\le\expo(\Delta')$ for any $\Delta'\le\m'$.
Then
\[
Z(\m^\vee)\sdp\sigma^\vee\hookrightarrow Z(\Delta^\vee)\times Z((\m')^\vee)\sdp\sigma^\vee
\]
which by induction hypothesis is
\[
Z(\Delta^\vee)\times Z(\m'^\Galinv)\sdp\sigma^\vee.
\]
Now $Z(\Delta^\vee)\times Z(\m'^\Galinv)$ is irreducible since $Z(\Delta^\vee)\times Z(\Delta'^\Galinv)$ is irreducible for any $\Delta'\le\m$
by the condition on $\m$. Therefore
\[
Z(\m^\vee)\sdp\sigma^\vee\hookrightarrow Z(\m'^\Galinv)\times Z(\Delta^\vee)\sdp\sigma^\vee=
Z(\m'^\Galinv)\times Z(\Delta^\Galinv)\sdp\sigma^\vee\hookrightarrow\std{\m^\Galinv}\sdp\sigma^\vee
\]
as required. Thus, the lemma follows from Corollary \ref{cor: stdirredcrit}.
\end{proof}

\begin{lemma}
Suppose that $\pi=Z(\m)\in\IrrGL$ satisfies \eqref{eq: supppi2}. Then $\pi\sdp\sigma$ is irreducible if and only if
$\pi_{[\tilde\alpha,\alpha]}\sdp\sigma$ and $\pi_{>\alpha}\times\tilde\pi_{>\alpha}$ are irreducible.
\end{lemma}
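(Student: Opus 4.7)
The key preparatory observation is that $\pi$ factors irreducibly as $\pi=\mu\times\nu\times\widetilde{\mu'}$ in any order, where $\mu=\pi_{>\alpha}$, $\nu=\pi_{[\tilde\alpha,\alpha]}$, and $\mu'=\widetilde{\pi_{<\tilde\alpha}}=\tilde\pi_{>\alpha}$. Indeed, since $\alpha,\tilde\alpha\notin\supp\pi$, no segment of one factor can be linked with a segment of another: their union would have to cross the missing point $\alpha$ or $\tilde\alpha$, and so could not be a segment.

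For the ``only if'' direction, I would apply Corollary \ref{cor: redAAvee} twice. With $A=\{\rho\in\CuspGL:\rho>\alpha\}$ (so $\tilde A=\{\rho<\tilde\alpha\}$), the above factorization already exhibits $\mu$ as the left-$A$ part and $\widetilde{\mu'}$ as the right-$\tilde A$ part of $\pi$, so $D_{A;\tilde A}(\pi)=\nu$; since $D_A(\sigma)=\sigma$, the corollary gives $\nu\sdp\sigma$ irreducible. With the tilde-invariant $A'=(\tilde\alpha,\alpha)$ we similarly get $D_{A';A'}(\pi)=\mu\times\widetilde{\mu'}$, so $\mu\times\widetilde{\mu'}\sdp\sigma$ is irreducible. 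Using $\comod(\widetilde{\mu'})=\comod(\mu')$ (the relation noted right after \eqref{eq: comod}) together with \eqref{eq: Tadic formula}, we obtain
\[
\comultG(\mu\times\mu'\sdp\sigma)=\comultG(\mu\times\widetilde{\mu'}\sdp\sigma),
\]
hence $[\mu\times\mu'\sdp\sigma]=[\mu\times\widetilde{\mu'}\sdp\sigma]$ has length one in $\GrG$. Therefore $\mu\times\mu'\sdp\sigma$ is irreducible, and since $(-)\sdp\sigma$ is exact, so is $\mu\times\mu'$ itself.

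For the ``if'' direction, set $\tau:=\mu\times\mu'$. Applying Lemma \ref{lem: >alphairred} to $\mu$, $\mu'$, and $\tau$ (each irreducible with support in $\{\rho>\alpha\}$ and satisfying \eqref{eq: supppi2}), all of $\mu\sdp\sigma$, $\mu'\sdp\sigma$, $\tau\sdp\sigma$ are irreducible. By \eqref{eq: indtildeinv}, $\widetilde{\mu'}\sdp\sigma$ has the same Jordan--H\"older content as $\mu'\sdp\sigma$; since both are of length one, $\widetilde{\mu'}\sdp\sigma\simeq\mu'\sdp\sigma$. Substituting this isomorphism into $\pi\sdp\sigma=\mu\times\nu\sdp(\widetilde{\mu'}\sdp\sigma)$ and using that $\nu$ is unlinked with $\mu$ and $\mu'$, one gets the chain of isomorphisms
\[
\pi\sdp\sigma\simeq\mu\times\nu\sdp(\mu'\sdp\sigma)\simeq(\mu\times\nu\times\mu')\sdp\sigma\simeq(\tau\times\nu)\sdp\sigma=Z(\m_\tau+\m_\nu)\sdp\sigma,
\]
with $\tau\times\nu$ irreducible.

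It remains to show $Z(\m_\tau+\m_\nu)\sdp\sigma$ is irreducible. The plan is to apply Corollary \ref{cor: stdirredcrit}(2) to the non-negative multisegment $\m^*:=\m_\tau+\pstv{\m_\nu}$ together with $\sigma$; the hypothesis on the zero-exponent support is automatic since $\supp(\pstv{\m_\nu})_{=0}\subset(\tilde\alpha,\alpha)$ does not meet $\cuspred_\sigma=\{\alpha,\tilde\alpha\}\cap\alpha[\Z]$. Starting from $Z(\m_\tau+\m_\nu)\hookrightarrow\std{\m_\tau+\m_\nu}$ by Zelevinsky, one produces the required embedding $Z(\m_\tau+\m_\nu)\sdp\sigma\hookrightarrow\std{\m^*}\sdp\sigma=\std{\m^*;\sigma}$ by repeatedly substituting $Z(\Delta)\sdp\sigma\simeq Z(\tilde\Delta)\sdp\sigma$ (valid since both sides are irreducible by Theorem \ref{thm: oneseg} together with $\Delta\cap\cuspred_\sigma=\emptyset$) for each negative segment $\Delta\le\m_\nu$, working from the rightmost factor inward in a decreasing-exponent ordering of $\std{\m_\tau+\m_\nu}$, and then reordering the resulting positive-exponent product to a valid $\std{\m^*}$ ordering. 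The parallel computation on the contragredient side yields the second embedding into $\std{\m^{*\Galinv};\sigma^\vee}$. The main obstacle is verifying the step-by-step substitution in the nested product, which requires checking that after each substitution the remaining rightmost $\GL$-component continues to induce in a way compatible with $\sdp\sigma$; this is ultimately the place where the hypotheses $\nu\sdp\sigma$ and $\mu\times\mu'$ irreducible are used together to rule out intermediate obstructions (and where the compatibility between $\vee$ and $\Galinv$ on the contragredient side needs to be handled).
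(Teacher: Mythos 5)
Your decomposition $\pi=\mu\times\nu\times\widetilde{\mu'}$ and the overall two-way strategy match the paper's. For the ``only if'' direction your route via Corollary~\ref{cor: redAAvee} is a legitimate alternative to the paper's (which just observes that $[\pi\sdp\sigma]=[\pi_{>\alpha}\times\tilde\pi_{>\alpha}\times\pi_{[\tilde\alpha,\alpha]}\sdp\sigma]$ in $\GrG$, by commutativity of $\GrGL$ and \eqref{eq: indtildeinv}, and then reads off irreducibility of the two factors); both work, and the $\comod$--invariance trick you use to pass from $\mu\times\widetilde{\mu'}\sdp\sigma$ to $\mu\times\mu'$ is correct.

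The gap is exactly where you flag it, in Step 3 of the ``if'' direction, and it is real. Your plan to go from $\std{\m_\tau+\m_\nu}\sdp\sigma$ to $\std{\m^*}\sdp\sigma$ by substituting $Z(\Delta)\sdp\sigma\simeq Z(\tilde\Delta)\sdp\sigma$ for one negative $\Delta\le\m_\nu$ at a time requires, after each replacement, commuting $Z(\tilde\Delta)$ leftward past the remaining factors $Z(\Delta')$. That commutation needs $Z(\Delta')\times Z(\tilde\Delta)$ to be irreducible, and for a general positive multisegment in $(\tilde\alpha,\alpha)$ the segments $\Delta'$ and $\tilde\Delta$ can be linked, so the swap fails at an intermediate stage. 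The point you're missing is that the hypothesis ``$\nu\sdp\sigma$ irreducible'' lets you avoid all those intermediate steps at once: by Corollary~\ref{cor: Tadanalog}, $\nu\sdp\sigma=Z(\m_\nu;\sigma)$, and by definition $Z(\m_\nu;\sigma)\hookrightarrow\std{\m_\nu;\sigma}=\std{\pstv{\m_\nu}}\sdp\sigma$. Now write
\[
\pi\sdp\sigma=(\mu\times\mu')\sdp\bigl(\nu\sdp\sigma\bigr)=Z(\m_\tau)\sdp Z(\m_\nu;\sigma)\hookrightarrow\std{\m_\tau}\sdp\std{\pstv{\m_\nu}}\sdp\sigma,
\]
and observe that every segment of $\m_\tau$ (support $>\alpha$) is unlinked with every segment of $\pstv{\m_\nu}$ (support in $(\tilde\alpha,\alpha)$), because $\alpha$ and $\tilde\alpha$ are absent from the support; thus the right-hand side is exactly $\std{\m_\tau+\pstv{\m_\nu}}\sdp\sigma=\std{\pstv{\m};\sigma}$ in a valid ordering, no reordering or intermediate swaps needed. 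The dual embedding $\pi^\vee\sdp\sigma^\vee\hookrightarrow\std{(\pstv{\m})^\Galinv;\sigma^\vee}$ is obtained the same way, and Corollary~\ref{cor: stdirredcrit} finishes the proof. This one-shot use of $Z(\m_\nu;\sigma)\hookrightarrow\std{\pstv{\m_\nu}}\sdp\sigma$ is precisely what the paper does, and it is the step that actually uses the hypothesis on $\nu\sdp\sigma$ (the hypothesis on $\mu\times\mu'$ is used only to identify $\mu\times\mu'=Z(\m_\tau)$).
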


\begin{proof}
The ``only if'' part follows from \eqref{eq: Z(m)product} and \eqref{eq: indtildeinv}.
On the other hand, by \eqref{eq: >alphatilde} and Lemma \ref{lem: >alphairred} we have
\[
\pi\sdp\sigma=\pi_{>\alpha}\times\pi_{[\tilde\alpha,\alpha]}\times\pi_{<\alpha}\sdp\sigma=
\pi_{>\alpha}\times\pi_{[\tilde\alpha,\alpha]}\times\tilde\pi_{>\alpha}\sdp\sigma=
\pi_{>\alpha}\times\tilde\pi_{>\alpha}\times\pi_{[\tilde\alpha,\alpha]}\sdp\sigma.
\]
Thus, if $\pi_{[\tilde\alpha,\alpha]}\sdp\sigma$ and $\pi_{>\alpha}\times\tilde\pi_{>\alpha}$ are irreducible we get
\[
\pi\sdp\sigma=Z(\m_{>\alpha}+\tilde\m_{>\alpha})\sdp Z(\m_{[\tilde\alpha,\alpha]};\sigma)\hookrightarrow
\std{\m_{>\alpha}+\tilde\m_{>\alpha}+\pstv\m_{[\tilde\alpha,\alpha]};\sigma}=\std{\pstv\m;\sigma}.
\]
Similarly, we have
\[
\pi^\vee\sdp\sigma^\vee\hookrightarrow Z((\pstv\m)^\Galinv;\sigma^\vee).
\]
Thus, $\pi\sdp\sigma$ is irreducible by Corollary \ref{cor: stdirredcrit}.
\end{proof}

\begin{corollary} \label{cor: genirr}
Suppose that $\alpha\in\{\tilde\alpha,\tilde\alpha[1],\tilde\alpha[2]\}$ and $\pi\in\IrrGL$ satisfies \eqref{eq: supppi2}.
Then $\pi\sdp\sigma$ is irreducible if and only if $\pi_+\times\tilde\pi_+$ is irreducible.
\end{corollary}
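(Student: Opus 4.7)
The plan is to apply the preceding lemma, which reduces the irreducibility of $\pi\sdp\sigma$ to the simultaneous irreducibility of $\pi_{[\tilde\alpha,\alpha]}\sdp\sigma$ and $\pi_{>\alpha}\times\tilde\pi_{>\alpha}$. Under the present hypothesis $\alpha\in\{\tilde\alpha,\tilde\alpha[1],\tilde\alpha[2]\}$ I would verify two things: first, that $\pi_+\times\tilde\pi_+=\pi_{>\alpha}\times\tilde\pi_{>\alpha}$, and second, that $\pi_{[\tilde\alpha,\alpha]}\sdp\sigma$ is automatically irreducible.

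For the identification $\pi_+=\pi_{>\alpha}$, I would examine the trichotomy in \eqref{eq: Z(m)product} segment by segment. Because $\supp\pi\cap\cuspred_\sigma=\emptyset$, no segment of $\m$ contains $\alpha$ or $\tilde\alpha$, so each segment is supported entirely in one of the three regions $\{\rho:\rho>\alpha\}$, $\{\rho:\tilde\alpha<\rho<\alpha\}$, $\{\rho:\rho<\tilde\alpha\}$ of $\alpha[\Z]$. Since $\expo(\alpha)\geq 0$, segments of the first (resp.\ third) type have strictly positive (resp.\ negative) exponent. In each of the three cases of the hypothesis, the middle region is either empty (when $\alpha\in\{\tilde\alpha,\tilde\alpha[1]\}$) or equals the single self-dual cuspidal $\{\lshft\alpha\}$ of exponent $0$ (when $\alpha=\tilde\alpha[2]$); either way, segments supported there have exponent $0$ and do not contribute to $\m_{>0}$. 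Hence $\m_{>0}=\m_{>\alpha}$, giving $\pi_+=\pi_{>\alpha}$, and the same reasoning applied to $\tilde\m$ yields $\tilde\pi_+=\tilde\pi_{>\alpha}$.

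For the irreducibility of $\pi_{[\tilde\alpha,\alpha]}\sdp\sigma$, the first two cases are trivial since $\pi_{[\tilde\alpha,\alpha]}=\one$. In the remaining case $\alpha=\tilde\alpha[2]$, set $\beta=\lshft\alpha=\rshft{\tilde\alpha}$, which is self-dual with $\expo(\beta)=0$. Every segment of $\m_{[\tilde\alpha,\alpha]}$ is then forced to be the singleton $\{\beta\}$, so $\pi_{[\tilde\alpha,\alpha]}=\beta^{\times k}=Z(k\cdot\{\beta\})$ for some $k\geq 0$. Since $\beta\notin\cuspred_\sigma$, Theorem \ref{thm: oneseg} gives the irreducibility of $\beta\sdp\sigma$, and Corollary \ref{cor: tempcase} applied to the multisegment consisting of $k$ copies of the singleton $\{\beta\}$ (each of exponent $0$) then yields the irreducibility of $\beta^{\times k}\sdp\sigma$.

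I do not foresee any serious obstacle: the proof is essentially a bookkeeping exercise combining the reduction in the preceding lemma with a single appeal to Corollary \ref{cor: tempcase}. The substantive point is the observation that $\alpha\in\{\tilde\alpha,\tilde\alpha[\pm 1],\tilde\alpha[\pm 2]\}$ is precisely the condition forcing the open interval between $\tilde\alpha$ and $\alpha$ in $\alpha[\Z]$ to contain at most one cuspidal, and in the borderline case a self-dual one of exponent zero; this is also what the introduction flags as the sharp cutoff for the theorem.
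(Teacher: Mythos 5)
Your proof is correct and takes essentially the same route as the paper's brief justification following the corollary: identify $\pi_+=\pi_{>\alpha}$ (and likewise for $\tilde\pi$) from the hypothesis on $\alpha$, observe that $\pi_{[\tilde\alpha,\alpha]}$ is either trivial or $\beta^{\times l}$ with $\beta=\lshft\alpha$ self-dual of exponent zero, and then invoke the preceding lemma together with Corollary \ref{cor: tempcase}. Your write-up merely spells out the bookkeeping that the paper leaves implicit.
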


Indeed, note that in this case $\pi_+=\pi_{>\alpha}$ (and similarly for $\tilde\pi$).
Moreover, $\pi_{[\tilde\alpha,\alpha]}\ne\one$ only if $\beta:=\lshft\alpha$ satisfies $\tilde\beta=\beta$, in which case
$\pi_{[\tilde\alpha,\alpha]}=\beta^{\times l}$ for some $l>0$.

\begin{remark}
By the results of Shahidi \cite{MR1070599}, the assumption on $\alpha$ is always satisfied if $G$ is quasi-split and $\sigma$ is generic.
For an arbitrary $\sigma\in\CuspG$, we have $\alpha\in\{\tilde\alpha,\tilde\alpha[\pm1]\}$ for all but finitely many
$\alpha\in\cuspred_\sigma$.
\end{remark}

\begin{remark}
Suppose that $\alpha=\tilde\alpha[m]$ with $m>2$.
Let $\Delta=[\tilde\alpha[2],\lshft\alpha]\in\Seg_{>0}$, so that $\tilde\Delta=\lshft\Delta\prec\Delta$.
Then $Z(\Delta+\Delta)\sdp\sigma$ is reducible, since $Z(\Delta)\times Z(\tilde\Delta)\sdp\sigma$ is reducible.
Thus, the assumption on $\alpha$ in Corollary \ref{cor: genirr} is essential.
\end{remark}


\def\cprime{$'$}
\providecommand{\bysame}{\leavevmode\hbox to3em{\hrulefill}\thinspace}
\providecommand{\MR}{\relax\ifhmode\unskip\space\fi MR }
\providecommand{\MRhref}[2]{%
  \href{http://www.ams.org/mathscinet-getitem?mr=#1}{#2}
}
\providecommand{\href}[2]{#2}

\end{document}